\documentclass[a4paper, reqno, 11pt]{amsart}

\usepackage[sc, noBBpl]{mathpazo}	
\usepackage{mathtools}

\usepackage{amsmath, amsthm, amssymb}
\usepackage[all]{xy}
\usepackage[usenames,dvipsnames]{color}
\usepackage[T1]{fontenc}
\usepackage[utf8]{inputenc}
\usepackage{graphicx}
\usepackage{comment}
\usepackage[linkbordercolor={0.8 0.9 0.9}, citebordercolor={0.85 0.85 0.85}]{hyperref}
\usepackage{cleveref}    
\usepackage[normalem]{ulem}

\usepackage[
backend=biber,
style=alphabetic
]{biblatex}
\numberwithin{equation}{section}

\theoremstyle{plain}
\newtheorem{theorem}{Theorem}[section]
\newtheorem*{theorem*}{Theorem}
\newtheorem{proposition}[theorem]{Proposition}
\newtheorem{lemma}[theorem]{Lemma}
\newtheorem{corollary}[theorem]{Corollary}
\theoremstyle{definition}
\newtheorem{definition}[theorem]{Definition}
\newtheorem{example}[theorem]{Example}
\theoremstyle{remark}
\newtheorem{remark}[theorem]{Remark}

  {\par\begin{tabular}{rcl}}%
  {\end{tabular}\par}

\newcommand{\ds}{\displaystyle}

\newcommand{\ip}[1]{\langle #1 \rangle}

\newcommand{\into}{\hookrightarrow}

\makeatletter
\newcommand*\slot{\mathpalette\slot@{.5}}
\newcommand*\slot@[2]{\mathbin{\vcenter{\hbox{\scalebox{#2}{$\m@th#1\bullet$}}}}}
\makeatother

\newcommand{\lie}{\mathfrak}

\newcommand{\CC}{\mathbb{C}}

\newcommand{\NN}{\mathbb{N}}

\newcommand{\TT}{\mathbb{T}}

\newcommand{\ZZ}{\mathbb{Z}}

\newcommand{\cA}{\mathcal{A}}
\newcommand{\cB}{\mathcal{B}}

\newcommand{\cG}{\mathcal{G}}

\newcommand{\cO}{\mathcal{O}}

\newcommand{\cU}{\mathcal{U}}

\newcommand{\rmc}{\mathrm{c}}

\newcommand{\SU}{\mathrm{SU}}

\newcommand{\bP}{\mathbf{P}}		
\newcommand{\bQ}{\mathbf{Q}}		

\newcommand{\Cc}{C_\rmc}

\DeclareMathOperator{\supp}{supp}

\DeclareMathOperator{\id}{id}

\DeclareMathOperator{\Vect}{span}
\DeclareMathOperator{\CCE}{CCE}

\newcommand{\sfS}{\mathsf{S}}
\newcommand{\sfR}{\mathsf{R}}

\newcommand{\crystal}{\overleftarrow{\cB}\!}
\DeclareMathOperator{\wt}{wt}
\DeclareMathOperator{\rk}{rk}
\newcommand{\Simples}{\mathbf{\Delta}}

\newcommand{\bPI}{\bP_I}
\newcommand{\bPIplus}{\bP^+_I}
\newcommand{\GK}{\cG(K)}
\newcommand{\GYI}{\cG(Y^I)}
\newcommand{\GXI}{\cG(X^I)}
\newcommand{\GXIL}{\cG(X^{I,L})}
\newcommand{\GIK}{\cG_I(K)}

\title[The crystal groupoid of a compact semisimple Lie group]{The crystal groupoid of a compact semisimple Lie group and its flag varieties}
\author{Aidan Sims}
\address{School of Mathematics and Statistics\\
UNSW Sydney\\
NSW 2052\\
Australia}
\email{aidan.sims@unsw.edu.au}

\author{Robert Yuncken}
\address{Universit\'e de Lorraine\\
CNRS\\
IECL\\
F-57000 Metz\\
France}
\email{robert.yuncken@univ-lorraine.fr}

\thanks{A.~Sims was supported by ARC grant DP250100297. R.~Yuncken was supported by ANR grants OpART (ANR-23-CE40-0016) and CroCQG (ANR-25-CE40-5010). The article is based upon work from COST Action CaLISTA CA21109 supported by COST (European Cooperation in Science and Technology): www.cost.eu.  R.~Yuncken is extremely grateful for the support and hospitality of the Sydney Mathematical Research Institute (SMRI) and the University of Wollongong.} 

\subjclass[2020]{20G42 (primary); 46L67, 17B37, 14M15 (secondary)}
\keywords{quantum groups, crystal basis, quantised coordinate rings, higher-rank graph, Deaconu-Renault groupoid, flag variety}

\date{\today}

\begin{document}

\begin{abstract}
  We introduce the crystal groupoid of a compact semisimple Lie group $K$; it is the Deaconu-Renault groupoid of a higher-rank analogue of a subshift of finite type on an inverse limit of Kashiwara crystals.  We show that its Steinberg algebra and its $C^*$-algebra realise the crystal limits of the polynomial function algebra of $K$ and its $C^*$-envelope, respectively, in the sense of Matassa and the second author. We show that the orbit space of the crystal groupoid is in bijection with the Weyl group of $K$, with topology determined by the Bruhat order on the Weyl group. We also define a crystal groupoid for flag manifolds and related Poisson homogeneous spaces, and prove the analogous structural results.
\end{abstract}

\maketitle

\section{Introduction}

Let $K$ be a simply connected compact semisimple Lie group.  Kashiwara and Lusztig showed that the representation theory of $K$ simplifies to purely combinatorial data if one passes to the quantised enveloping algebra $\cU_q(\lie{k}_\CC)$ and then takes the crystal limit as $q\to 0$.  The Kashiwara renormalisations $e_i$, $f_i$ of the simple root vectors act as partial isometries, preserving a ``crystal basis'' for each representation.

A similar thing happens for the coordinate function algebra of the group $\cO(K)$.  This too admits a $q$-deformation, now to a family of compact quantum group algebras $\cO(K_q)$ with $0<q<\infty$, and it was shown in \cite{MatYun:crystals} that $\cO(K_q)$ also admits a limit at $q=0$, generalising earlier examples from \cite{Woronowicz:pseudogroups,HonSzy:lens,Giselsson:SU3,GirPal}.
Moreover, the matrix coefficients in $\cO(K_0)$ corresponding to the crystal basis elements are all partial isometries.

Explicitly, the crystal limit, $\cO(K_0)$, is a Kumjian-Pask algebra \cite{Aranda:Kumjian-Pask}.  For those who prefer $C^*$-algebras, the $C^*$-envelope $C(K_0)$ of ``continuous functions'' is a higher-rank graph algebra \cite{KumPas}.  Although they are defined via generators and relations, higher-rank graph $C^*$-algebras are known to be isomorphic to $C^*$-algebras of Deaconu-Renault groupoids of semigroup actions \cite{KumPas}.  For more on Deaconu-Renault groupoids as models for Cuntz-Krieger-like algebras,  see \cite{Deaconu,Renault:groupoid_approach,KumPasRaeRen,KumPasRae,Paterson,Renault:Cuntz-like}.
It is natural to ask what the groupoid is for the crystallised function algebras.

Groupoid models for the function algebras of quantum $\SU(n)$, quantum spheres and quantum projective spaces were obtained by Sheu in the 1990s \cite{Sheu:CQGs,Sheu:CQG_lectures,Sheu:spheres,Sheu:projective_spaces} by using explicit calculations with Vaksman and Soibelman's representation theory \cite{VakSoi:SUn,Soibelman}.  
In this article, we take a more representation theoretic approach, using the results of \cite{MatYun:crystals} to describe a simple groupoid model of the crystallised function algebra of any compact semisimple Lie group, as well as any flag manifold and any of the related Poisson homogeneous spaces of the types considered in \emph{e.g.} \cite{StoDij,Stokman:quantum_orbit_method,NesTus:functions}.  

The resulting groupoids are explicitly related to Kashiwara's crystal basis theory.  For a dominant weight $\lambda\in\bP^+$ of $K$, let $\cB(\lambda)$ denote the irreducible crystal of highest weight $\lambda$.
We define the \emph{semi-infinite crystal} to be the inverse limit
 \begin{equation}
 \label{eq:crystal_intro}
   \crystal(\infty) = \underset{\lambda\in\bP^+}{\underleftarrow{\lim}}\cB(\lambda)
 \end{equation}
over all finite crystals, with connecting maps
\begin{equation}
  \label{eq:R_intro}
  \sfR_\mu : \cB(\lambda) \into \cB(\lambda-\mu)\otimes\cB(\mu) \to \cB(\mu),
  \qquad \text{for $\lambda\geq\mu$,}
\end{equation}
given by including $\cB(\lambda)$ into the tensor product then projecting onto the second tensor leg.  (Note that $\crystal(\infty)$ is not to be confused with Kashiwara's infinite crystal $\cB(\infty)$ from \cite{Kashiwara:crystal2}.)

The semi-infinite crystal $\crystal(\infty)$ admits a semigroup action of the dominant weight lattice
\begin{equation}
  \sfS : \bP^+ \curvearrowright \crystal(\infty),
\end{equation}
called the \emph{shift action}, which is the inverse limit of the projections onto the first tensor leg in~\eqref{eq:R_intro}, see Section~\ref{sec:action}.  This construction is analogous to the construction of a subshift of finite type as an inverse limit of finite words with the shift action.

The \emph{crystal groupoid}
\begin{equation}
\GK \rightrightarrows\crystal(\infty)
\end{equation}
is the Deaconu-Renault groupoid of this semigroup action \cite{Deaconu,Renault:Cuntz-like}, \emph{i.e.},
\[
  \GK = \{(x,\lambda,y) \in \crystal(\infty)\times\bP\times\crystal(\infty) \mid \text{$x$ and $y$ are tail equivalent} \},
\]
see Definition \ref{def:crystal_groupoid}.  The space $\Cc^\infty(\GK)$ of locally constant compactly supported functions on $\GK$ is a $*$-algebra under convolution; these functions may be considered ``smooth'' in the same way as locally constant functions on a $p$-adic group.  The $C^*$-completion
is denoted $C^*(\GK)$.  The crystal groupoid is amenable, so there is no ambiguity here.

Our main result is the following.

\begin{theorem*}[see Theorem~\ref{thm:isomorphism}]
  There are $^*$-isomorphisms
  \[
    \cO(K_0) \cong \Cc^\infty(\GK)\quad\text{ and }\quad C(K_0)\cong C^*(\GK).
  \]
\end{theorem*}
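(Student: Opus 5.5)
We plan to route the proof through the higher-rank graph picture, since by \cite{MatYun:crystals} the crystal limit $\cO(K_0)$ is a Kumjian--Pask algebra and $C(K_0)$ the corresponding higher-rank graph $C^*$-algebra. Concretely, we first recall the presentation from \cite{MatYun:crystals}. There is a $\bP^+$-graph (a rank-$r$ graph, $r=\rk K$, with degree monoid $\bP^+\cong\NN^r$) $\Lambda$. Its vertices are the elements of $\cB(\varpi_i)$-type data, and its paths of degree $\lambda$ are indexed by crystal basis elements of $\cB(\lambda)$ (more precisely by pairs arising from matrix coefficients). Factorisation $\lambda=\mu+\nu$ of a path is given by the crystal tensor-product embedding $\cB(\mu+\nu)\into\cB(\mu)\otimes\cB(\nu)$. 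The generators of $\cO(K_0)$ are the partial isometries $u^\lambda_{b,b'}$, which satisfy the Kumjian--Pask relations for $\Lambda$.

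The key step is to identify the infinite path space $\Lambda^\infty$ with $\crystal(\infty)$, and the shift $\sigma^\lambda$ on $\Lambda^\infty$ with $\sfS_\lambda$. An infinite path is a compatible family of finite paths of degree $\lambda$ for all $\lambda\in\bP^+$, compatible under factorisation. Taking the final segment of degree $\mu$ corresponds to $\sfR_\mu$, so this is exactly the inverse limit \eqref{eq:crystal_intro}, and dropping an initial segment is the shift action. We will check that this bijection is a homeomorphism, since cylinder sets $Z(b)$, $b\in\cB(\lambda)$, match the basic open sets of the inverse limit topology. It then follows that $\GK$ coincides with the Kumjian--Pask path groupoid $\cG_\Lambda$ as topological groupoids. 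Here we need $\Lambda$ to be row-finite without sources (finite crystals, nonempty tensor decompositions), and we need the identification of $\cG_\Lambda$ with the Deaconu--Renault groupoid of $\sigma$ from \cite{KumPas}.

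With this in hand, both isomorphisms follow from known results. For the Steinberg algebra we use the theorem of Clark--Flynn--an Huef (the Kumjian--Pask algebra $\mathrm{KP}_\CC(\Lambda)\cong \Cc^\infty(\cG_\Lambda)$, sending $s_\mu s_\nu^*\mapsto 1_{Z(\mu,\nu)}$), composed with $\cO(K_0)\cong \mathrm{KP}_\CC(\Lambda)$. This is a $*$-isomorphism for the involution $f^*(g)=\overline{f(g^{-1})}$. For the $C^*$ case we use $C^*(\Lambda)\cong C^*(\cG_\Lambda)$ from \cite{KumPas} and amenability. Alternatively, we can note that $C(K_0)$ is the $C^*$-envelope of $\cO(K_0)$ and that $C^*(\GK)$ is the enveloping $C^*$-algebra of $\Cc^\infty(\GK)$, since ample groupoid Steinberg algebras have the full groupoid $C^*$-algebra as universal completion.

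The main obstacle is the combinatorial bookkeeping of the identification of the $\bP^+$-graph of \cite{MatYun:crystals} with the crystal data. We must check that the factorisation property (unique factorisation) is precisely the statement that the composite maps $\cB(\lambda+\mu)\to\cB(\lambda)\times\cB(\mu)$ are injective with image the appropriate fibre product over vertices. We must also check that the convention for which tensor leg plays the role of range versus source matches $\sfR$ and $\sfS$ (Kashiwara's tensor convention versus path-concatenation order). We would first verify this in rank one, $K=\SU(2)$, where $\crystal(\infty)$ should recover the path space of the Woronowicz $q=0$ graph, and then argue in general using the highest-weight embeddings $b_{\lambda+\mu}\mapsto b_\lambda\otimes b_\mu$ and the tensor product rule.
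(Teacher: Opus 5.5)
\textbf{There is a genuine gap.} Your strategy is to identify $\GK$ with the path groupoid $\cG_\Lambda$ of the Matassa--Yuncken $\bP^+$-graph, then quote the Kumjian--Pask/Steinberg and $C^*(\Lambda)\cong C^*(\cG_\Lambda)$ theorems. That would prove the statement, but only if $\Lambda^\infty\cong\crystal(\infty)$ (compatibly with shifts) were available. That identification is the entire difficulty, not bookkeeping, and your argument for it rests on a model of $\Lambda$ that cannot be right.

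Suppose the paths of degree $\lambda$ were the elements of $\cB(\lambda)$, with factorisation given by the Cartan embedding $\cB(\mu+\nu)\into\cB(\mu)\otimes\cB(\nu)$. Unique factorisation would then force $\cB(\mu+\nu)$ to be in bijection with a fibre product $\cB(\mu)\times_{\Lambda^0}\cB(\nu)$. This already fails for $\SU(2)$. There $|\cB(2\varpi)|=3$, but with two edges $a,b$ of degree $\varpi$ the number of composable pairs, $\sum_v |\{e: s(e)=v\}|\,|\{e:r(e)=v\}|$, is never $3$. Concretely, if $aa$, $bb$ and $ba$ are composable, then $ab$ is too, for any choice of range and source maps.

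In the Hong--Szyma\'nski graph the edges are two-letter Cartan words, not letters. In higher rank the analogous recoding into a genuine $k$-graph is exactly what makes the graph of \cite{MatYun:crystals} complicated. Its vertex projections are sums of projections $p_a=f_a^*f_a$ with $a\in\cB(\mu)$, $\mu$ strictly dominant, and its finite paths are not simply crystal elements. So its infinite path space is not tautologically the inverse limit $\crystal(\infty)$. Proving $\Lambda^\infty\cong\crystal(\infty)$ would need real combinatorial input about Cartan words, for example:
\begin{itemize}
\item that whether $za$ is a Cartan word depends only on the vertex label of $a$;
\item that overlapping Cartan words glue to Cartan words.
\end{itemize}
Your proposal supplies none of this. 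The paper itself remarks that computing this path groupoid is difficult.

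\textbf{How the paper avoids this.} It never uses the explicit structure of $\Lambda$, only its existence (row-finite, no sources) together with the generators-and-relations presentation of $\cO(K_0)$ from \cite{MatYun:crystals}.
\begin{itemize}
\item \emph{Homomorphism.} Lemma~\ref{lem:relations} shows that the elements $\mathbf{1}_{Z(\emptyset,a)}$ satisfy relations (1)--(3). The substantive check is relation (2), i.e.\ part (e). There the Cartan braiding criterion (Proposition~\ref{prop:Cartan_test}) shows that $za'a$ is a Cartan word whenever $za'$ and $zb'$ are and $a'a=b'b$. This gives a graded $*$-homomorphism $\varphi_0$.
\item \emph{Surjectivity.} $\varphi_0$ is onto because the $\mathbf{1}_{Z(\emptyset,a)}$ generate $\Cc^\infty(\GK)$.
\item \emph{Injectivity.} This comes from the graded uniqueness theorem for Kumjian--Pask algebras. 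That theorem only needs the images of the vertex projections to be nonzero. These images are sums of $\mathbf{1}_{Z(a,a)}$, and $Z(a,a)\neq\emptyset$ because $b_{w_0}(\infty)a$ is a Cartan word.
\item \emph{$C^*$-case.} It follows from the gauge-invariant uniqueness theorem. Your alternative, via the full groupoid $C^*$-algebra being the enveloping $C^*$-algebra of the Steinberg algebra, would also work once the algebraic isomorphism is in hand.
\end{itemize}
To keep your route, you must replace the ``bookkeeping'' step with an actual proof of the groupoid isomorphism. Otherwise, switching to the uniqueness-theorem argument removes any need to know $\Lambda$ explicitly.
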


We also prove an analogue of this result for non-simply connected $K$, and for flag manifolds and other Poisson homogeneous spaces \cite{StoDij,Stokman:quantum_orbit_method,NesTus:functions}, see Theorem~\ref{thm:isomorphism2}.  Specifically, for any $X$ in the family of Poisson homogeneous spaces of \cite{NesTus:functions}, there is a crystal groupoid $\cG(X)$ such that we have $*$-isomorphisms
\[
    \cO(X_0) \cong \Cc^\infty(\cG(X))\quad\text{ and }\quad C(X_0)\cong C^*(\cG(X)).
\]

Finally, we describe the orbit space of these crystal groupoids as topological spaces.  We show that the orbit space of $\GK$ is homeomorphic to the Weyl group $W$ with topology such that the closed sets are the order ideals for the Bruhat order.  For a homogeneous space $X$, the orbit space of $\cG(X)$ is $W/W_I$ for an appropriate subgroup $W_I$, again with the Bruhat order topology.

\medskip

The article is structured as follows.  In Section~\ref{sec:crystal}, we define the semi-infinite crystal $\crystal(\infty)$.  We also define the semigroup action of the dominant weights $\bP^+ \cong \NN^{\rk(K)}$ on $\crystal(\infty)$ and the associated Deaconu-Renault groupoid $\GK$, here called the {crystal groupoid}.  In Section~\ref{sec:algebra}, we prove the isomorphism theorem above.  In Section~\ref{sec:structure}, we examine the structure of the crystal groupoid and of its orbit space.  In Section~\ref{sec:homogeneous spaces} we define the crystal groupoid $\cG(X)$ for flag manifolds $X$ and  other Poisson homogeneous spaces.  We establish the $*$-isomorphisms above and describe the structure of the groupoid orbit spaces.

\subsection*{Acknowledgments}

It is a pleasure to thank J.~Renault, M.~Matassa,\linebreak S.~Neshveyev and T.~Schulz for discussions on this material. 

\subsection*{A.I. disclosure}

We used ChatGPT in the proof-reading stage of this manuscript. In addition to identifying a number of typos, it suggested some additional references and pointed out that we had forgotten to prove injectivity of the map $\pi_I^*$ in Lemma~\ref{lem:pi_I_star}.   A.I. was not used to develop any results or proofs.

\subsection*{Conventions and notation}

We use $\NN = \{0,1,2,\ldots\}$.


\section{The semi-infinite crystal}
\label{sec:crystal}

\subsection{Finite crystals}

Let $K$ be a simply connected compact semisimple Lie group and $T$ a maximal torus. Writing $r=\rk(K)$ for the rank of $K$, the weight lattice of $K$ is $\bP=\widehat{T} \cong \ZZ^{r}$.   We fix a set $\Simples = \{\alpha_1,\ldots,\alpha_r\}$ of simple roots for the complexified Lie algebra $\lie{k}_\CC$, and let $\{\varpi_1,\ldots,\varpi_r\}$ be the corresponding  fundamental weights.  We often silently make the identification $\Simples = \{1,\ldots,r\}$.  The set of dominant integral weights is $\bP^+ = \NN\{\varpi_i\} \cong \NN^r$.  It is a semigroup and endows $\bP$ with an ordering.

We will recall just enough crystal basis theory to be able to follow the article.  We will shortly convert this theory into symbolic dynamics, at which point only the formal properties of crystal bases will be needed.  For more details on crystal bases, we refer the reader to \cite{HonKan} for a representation-theoretic approach or \cite{BumpSchilling} for a combinatorial approach.

A finite-dimensional unitary representation $V$ of $K$ gives rise to a $*$-representation of the universal enveloping algebra $\cU(\lie{k}_{\CC})$.  This can then be deformed to a $*$-representation of the quantised enveloping algebra $\cU_q(\lie{k}_\CC)$ on $V$ for $0<q<1$.  In the crystal limit, as $q\to0$, one obtains a basis $\cB$ of $V$, called a \emph{crystal}, which is stable under the action of renormalised versions $e_i,f_i$ of the Chevalley generators $E_i,F_i\in\cU_q(\lie{k}_\CC)$.

Let us extract the formal properties.  A (finite) crystal is a finite set $\cB$ equipped with:
\begin{itemize}
\item a pair of maps $e_i,f_i : \cB \sqcup \{0\} \to \cB \sqcup \{0\}$, called \emph{Kashiwara operators}, such that $e_i(0) = f_i(0) = 0$ and
\begin{equation}
  \label{eq:Kashiwara}
    e_i(a) = b \iff f_i(b)=a, \qquad \text{whenever }a,b\neq 0,
\end{equation}
\item a map $\wt:\cB \to \bP$, called the \emph{weight}, such that
\[
  \wt(e_i(b)) = \wt(b) + \alpha_i, \qquad \text{whenever }e_i(b) \neq 0,
\]
\end{itemize}
as well as some other axioms which we will ignore for now; see \cite[\S{}2.2]{BumpSchilling} or \cite[\S{}4.2]{HonKan} for the precise axioms.

The Kashiwara operators define a graph structure on $\cB$, where two elements of $\cB$ are joined by an edge if and only if they are connected by a pair of Kashiwara operators as in~\eqref{eq:Kashiwara}.  The connected components of the crystal graph correspond to the irreducible subrepresentations of $V$, and we call them irreducible subcrystals.  For each $\mu\in\bP^+$, there is an irreducible crystal of highest weight $\mu\in\bP^+$, which we will denote by $\cB(\mu)$.

Crucially, crystals are compatible with tensor products, in the sense that if $\cB$ and $\cB'$ are crystals for $V$ and $V'$, respectively, then $\cB\otimes\cB'$ is a crystal for $V\otimes V'$.  Note that $\cB\otimes\cB'$ is really just the Cartesian product $\cB\times\cB'$, but we will continue to use the suggestive tensor product notation.
The action of the Kashiwara operators on the tensor product crystal is given by Kashiwara's tensor rule, see \cite[\S{}2.3]{BumpSchilling} or \cite[\S{}4.4]{HonKan}.

\subsection{The semi-infinite crystal}

\begin{definition}
\label{def:Cartan_component}
 Let $\mu,\nu\in\bP^+$.
  The \emph{Cartan component} of $\cB(\mu)\otimes\cB(\nu)$ is the unique irreducible subcrystal of highest weight $\mu+\nu$.
  More generally, for $\mu_n,\ldots,\mu_0\in\bP^+$, the Cartan component of $\cB(\mu_n)\otimes\cdots\otimes\cB(\mu_0)$ is the unique irreducible subcrystal of highest weight $\sum_{j=0}^n\mu_j$.
\end{definition}

Let $\lambda,\mu\in\bP^+$ with $\mu\leq\lambda$.  As mentioned in the introduction, we can use the inclusion of the Cartan component
\[
  \cB(\lambda) \into \cB(\lambda-\mu) \otimes \cB(\mu)
\]
to decompose any $a\in\cB(\lambda)$ as a product
\[
  a \mapsto b\otimes c \qquad\text{with } b\in\cB(\lambda-\mu),~c\in\cB(\mu).
\]
We thus obtain two maps
\begin{align*}
  &\sfR_\mu : \cB(\lambda) \to \cB(\mu), && a\mapsto c,\\
  &\sfS_\mu : \cB(\lambda) \to \cB(\lambda-\mu), \quad && a\mapsto b.
\end{align*}
In \cite{MatYun:crystals}, these maps were called the right-end and left-end maps, respectively.
However, as described in the introduction, we will use $\sfR_\mu$ to define semi-infinite words (infinite to the left, finite to the right) and $\sfS_\mu$ to define a symbolic dynamical system on these.  As such it is appropriate to call $\sfR_\mu$ the \emph{truncation map} and $\sfS_\mu$ the \emph{shift map}.

\begin{lemma}
The truncation and shift maps satisfy the following relations, whenever $\mu+\nu\leq\lambda$:
\begin{align}
   & \sfR_\nu \circ \sfR_{\mu+\nu} = \sfR_{\nu} :  \cB(\lambda) \to \cB(\nu),
   \label{eq:RR} \\
   & \sfS_\nu \circ \sfS_{\mu} = \sfS_{\mu+\nu} :  \cB(\lambda) \to \cB(\lambda-\mu-\nu),
   \label{eq:SS} \\
   & \sfS_{\mu}\circ\sfR_{\mu+\nu} = \sfR_{\nu}\circ\sfS_{\mu} : \cB(\lambda) \to \cB(\nu).
   \label{eq:SR}
\end{align}
\end{lemma}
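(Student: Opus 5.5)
The plan is to reduce all three identities to a single statement about triple tensor products. For $\mu+\nu\le\lambda$, consider the Cartan component of
\[
\cB(\lambda-\mu-\nu)\otimes\cB(\mu)\otimes\cB(\nu).
\]
Write $\iota:\cB(\lambda)\into\cB(\lambda-\mu-\nu)\otimes\cB(\mu)\otimes\cB(\nu)$ for its inclusion, and decompose $\iota(a)=x\otimes y\otimes z$. The key claim is that this inclusion can be factored through either bracketing.

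First factorisation: include $\cB(\lambda)\into\cB(\lambda-\nu)\otimes\cB(\nu)$ as a Cartan component, then include the first leg $\cB(\lambda-\nu)\into\cB(\lambda-\mu-\nu)\otimes\cB(\mu)$, giving $a\mapsto \sfS_\nu(a)\otimes\sfR_\nu(a)$.

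Second factorisation: include $\cB(\lambda)\into\cB(\lambda-\mu-\nu)\otimes\cB(\mu+\nu)$, then include the second leg $\cB(\mu+\nu)\into\cB(\mu)\otimes\cB(\nu)$.

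Both composites are injective crystal morphisms into the triple tensor product. Tensor products of crystal morphisms are crystal morphisms by the tensor rule, and the tensor product is associative as sets with Kashiwara operators. Each composite sends the highest weight element $b_\lambda$ to $b_{\lambda-\mu-\nu}\otimes b_\mu\otimes b_\nu$, because the Cartan inclusion $\cB(\alpha+\beta)\into\cB(\alpha)\otimes\cB(\beta)$ sends $b_{\alpha+\beta}\mapsto b_\alpha\otimes b_\beta$ (the unique element of weight $\alpha+\beta$). Since $\cB(\lambda)$ is connected, generated from $b_\lambda$ by the $f_i$, and morphisms commute with $f_i$, both composites coincide with $\iota$. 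This uses the definition of the Cartan component in Definition~\ref{def:Cartan_component} and the uniqueness of the element of extremal weight $\sum\mu_j$.

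By symmetry one may also factor as
\[
\cB(\lambda)\into\cB(\lambda-\mu)\otimes\cB(\mu)\into\cB(\mu)\otimes\cB(\lambda-\mu-\nu)\otimes\cB(\nu)
\]
with the roles of the legs arranged appropriately. For the relations as stated I need the orderings matching the definitions of $\sfS_\mu$ and $\sfR_\mu$, so I apply the same argument to the leg orders required in each case.

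Reading off legs now gives the identities. For \eqref{eq:RR}, the second factorisation through $\cB(\lambda-\mu-\nu)\otimes\cB(\mu+\nu)$, followed by the inclusion of $\cB(\mu+\nu)$, shows $z=\sfR_\nu(\sfR_{\mu+\nu}(a))$. The first factorisation shows $z=\sfR_\nu(a)$. For \eqref{eq:SS}, use the decomposition
\[
\cB(\lambda-\mu-\nu)\otimes\cB(\nu)\otimes\cB(\mu),
\]
peeling $\mu$ first and then $\nu$. This gives $x=\sfS_\nu\sfS_\mu(a)$, and grouping the last two legs gives $x=\sfS_{\mu+\nu}(a)$. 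For \eqref{eq:SR}, use
\[
\cB(\mu)\otimes\cB(\nu)\otimes\cB(\lambda-\mu-\nu)
\]
with the grouping $\cB(\lambda-\mu)\otimes\cB(\mu)$ applied to the right. Then the middle leg equals $\sfS_\mu\sfR_{\mu+\nu}(a)$ via one bracketing and $\sfR_\nu\sfS_\mu(a)$ via the other. The exact leg ordering just needs to be aligned with the conventions for $\sfS$ and $\sfR$.

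The main obstacle is checking that the iterated inclusions really land in the Cartan component and agree. This is handled by the highest-weight-vector argument above, which needs only that crystal morphisms intertwine the $f_i$ and that the irreducible crystal $\cB(\lambda)$ is generated from $b_\lambda$. Everything else is bookkeeping of tensor legs.
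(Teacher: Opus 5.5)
Your approach is the paper's. Its proof just says to check the relations on an element $b_2\otimes b_1\otimes b_0$ of the Cartan component of a triple tensor product. Your highest-weight argument is exactly the coherence fact that this ``direct check'' takes for granted: both bracketings of the iterated Cartan inclusions are crystal embeddings sending $b_e(\lambda)$ to $b_e(\lambda-\mu-\nu)\otimes b_e(\mu)\otimes b_e(\nu)$, hence both equal the triple Cartan inclusion. That argument is correct. A Cartan inclusion is an embedding onto a connected component, hence strict, so tensoring it with an identity map still commutes with the Kashiwara operators.

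Your bookkeeping for \eqref{eq:RR} and \eqref{eq:SS} is right, but the step for \eqref{eq:SR} fails as written. The triple must be $\cB(\lambda-\mu-\nu)\otimes\cB(\nu)\otimes\cB(\mu)$, the same one you used for \eqref{eq:SS}. Grouping the last two legs gives $\sfR_{\mu+\nu}(a)$, whose left leg in $\cB(\nu)\otimes\cB(\mu)$ is $\sfS_\mu\sfR_{\mu+\nu}(a)$. Grouping the first two gives $\sfS_\mu(a)$, whose right leg is $\sfR_\nu\sfS_\mu(a)$. Both are the middle leg.

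In your order $\cB(\mu)\otimes\cB(\nu)\otimes\cB(\lambda-\mu-\nu)$, the groupings are $\cB(\mu+\nu)\otimes\cB(\lambda-\mu-\nu)$ and $\cB(\mu)\otimes\cB(\lambda-\mu)$. So the middle leg computes $\sfR_\nu\sfS_{\lambda-\mu-\nu}(a)=\sfS_{\lambda-\mu-\nu}\sfR_{\lambda-\mu}(a)$ instead. The legs also cannot be ``aligned'' after the fact, because the Cartan inclusions into $\cB(\alpha)\otimes\cB(\beta)$ and $\cB(\beta)\otimes\cB(\alpha)$ differ by the Cartan braiding, not by the flip. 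That identity is \eqref{eq:SR} after the relabelling $\mu\mapsto\lambda-\mu-\nu$, so nothing is lost, but it is not what you claimed. Similarly, the single order $\cB(\lambda-\mu-\nu)\otimes\cB(\mu)\otimes\cB(\nu)$ named in the paper gives \eqref{eq:SS} and \eqref{eq:SR} only after swapping $\mu$ and $\nu$.

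Finally, your ``by symmetry'' factorisation display is not well formed: the second arrow is not the inclusion of a single leg. It is not needed and should simply be dropped.
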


\begin{proof}
One can directly check these relations on an element $b_2\otimes b_1\otimes b_0$ in the Cartan component of $\cB(\lambda-\mu-\nu) \otimes \cB(\mu)\otimes\cB(\nu)$.
\end{proof}

Thanks to the relation~\eqref{eq:RR}, the set of finite crystals $\cB(\lambda)$ with connecting maps
\begin{equation}
\label{eq:directed_system}
  \sfR_\mu : \cB(\lambda) \to \cB(\mu), \qquad\qquad (\lambda\geq\mu)
\end{equation}
forms a projective system of finite sets.

\begin{definition}
    \label{def:B_infty}
    The \emph{semi-infinite crystal} $\crystal(\infty)$ is the inverse limit of the system of finite crystals,
    \[
      \crystal(\infty) = \underset{\lambda\in\bP^+}{\underleftarrow{\lim}}\cB(\lambda),
    \]
    with connecting maps~\eqref{eq:directed_system}.
    This means that an element of $\crystal(\infty)$ is a family $(b_\lambda)_{\lambda\in\bP^+}$  with $b_\lambda\in\cB(\lambda)$ for every $\lambda$ and such that $\sfR_\mu(b_{\lambda}) = b_\mu$ for all $\lambda\geq\mu$.

    We equip $\crystal(\infty)$ with the inverse limit topology.  So a base of clopen sets is the collection of cylinder sets $\sfR_\mu^{-1}(a)$ indexed by $\mu\in\bP^+$ and $a\in\cB(\mu)$.
\end{definition}

\subsection{The shift action}
\label{sec:action}

The relations~\eqref{eq:RR} and~\eqref{eq:SR} show the compatibility of the truncation and shift maps with the directed system of finite crystals.  We get well-defined truncation and shift maps on the infinite crystal:
    \begin{align}
        \sfR_\mu : &\crystal(\infty) \to \cB(\mu);
        & (b_\lambda)_{\lambda\in\bP^+} &\mapsto b_\mu,
        \label{eq:R}\\
        \sfS_\mu :&\crystal(\infty) \to \crystal(\infty);
        & (b_\lambda)_{\lambda\in\bP^+} &\mapsto (\sfS_\mu(b_{\lambda+\mu}))_{\lambda\in\bP^+}.
    \end{align}
These maps are continuous, and the $\sfS_\mu$ are local homeomorphisms.  Moreover, they satisfy the analogues of relations \eqref{eq:RR}--\eqref{eq:SR} with $\cB(\lambda)$ replaced by $\crystal(\infty)$ and the convention that $\infty-\mu = \infty$ for any $\mu\in\bP^+$.

In particular, the semi-infinite analogue of Equation~\eqref{eq:SS} implies that the shift maps $(\sfS_\mu)_{\mu\in\bP^+}$ define the continuous semigroup action
\begin{equation}
\label{eq:shift}
  \sfS : \bP^+ \curvearrowright \crystal(\infty)
\end{equation}
as mentioned in the introduction.

\subsection{The crystal groupoid}

Now, following Renault \cite{Renault:groupoid_approach} and Deaconu \cite{Deaconu}, we can associate a groupoid to the dynamical system~\eqref{eq:shift}.  The definition is as follows.

\begin{definition}
    \label{def:crystal_groupoid}
    The \emph{crystal groupoid} of the simply connected compact semisimple Lie group $K$ is the groupoid $\GK \rightrightarrows \crystal(\infty)$ given by
    \[
        \GK = \{ (x,\mu-\nu,y) \in \crystal(\infty) \times \bP \times \crystal(\infty) \mid  \mu,\nu\in\bP^+,\ \sfS_{\mu}(x) = \sfS_{\nu}(y) \}
    \]
    with range, source, product and inverse given by
    \begin{align*}
     & r(x,\mu,y) = x, &
     & s(x,\mu,y) = y, \\
     & (x,\mu,y)(y,\nu,z) = (x, \mu+\nu, z), &
     & (x,\mu,y)^{-1} = (y,-\mu,x).
    \end{align*}
\end{definition}

This groupoid comes with a standard topology, as follows.

\begin{lemma}
  \label{lem:topology}
  For any finite crystal elements $a\in\cB(\mu)$, $b\in\cB(\nu)$, put
  \begin{equation}
  \label{eq:Z}
    Z(a,b) = \{(za,\mu-\nu,zb) \in\GK \mid z\in\!\crystal(\infty),\ za, zb\in\!\crystal(\infty)  \}.
  \end{equation}
  Then these sets form a basis of compact open bisections for $\GK$, making it into a second countable étale groupoid.
\end{lemma}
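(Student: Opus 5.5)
The plan is to identify the sets $Z(a,b)$ with the standard basic open sets of a Deaconu--Renault groupoid, and then to import the known facts about that topology from \cite{Deaconu,Renault:Cuntz-like}. First I make the notation $za$ precise. For $z\in\crystal(\infty)$ and $a\in\cB(\mu)$, I read $za$ as the unique $x\in\crystal(\infty)$ with $\sfS_\mu(x)=z$ and $\sfR_\mu(x)=a$, when such an $x$ exists.

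\emph{Uniqueness.} For each $\lambda$, the embedding $\cB(\lambda+\mu)\into\cB(\lambda)\otimes\cB(\mu)$ is injective. So $x_{\lambda+\mu}$ is determined by the pair $(\sfS_\mu x_{\lambda+\mu},\sfR_\mu x_{\lambda+\mu})=(z_\lambda,a)$. Since the indices $\lambda+\mu$ are cofinal in $\bP^+$, the whole family $x$ is determined. Hence $\sfS_\mu$ is injective on the cylinder $U_a:=\sfR_\mu^{-1}(a)$.

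\emph{Topology of the domain.} The cylinder $U_a$ is compact open, and $\sfS_\mu$ is a continuous local homeomorphism. It follows that $\sfS_\mu|_{U_a}$ is a homeomorphism onto its image $D_a=\{z : za \text{ exists}\}$. This image is open, because $\sfS_\mu$ is a local homeomorphism, and compact, as a continuous image of a compact set. I use this compactness trick to avoid having to analyse the Cartan-component condition on $z_\lambda\otimes a$ directly.

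\emph{Identification with the standard sets.} Recall that the Deaconu--Renault topology on $\GK$ has as basis the sets
\[
  Z(U,\mu,\nu,V)=\{(x,\mu-\nu,y) : x\in U,\ y\in V,\ \sfS_\mu x=\sfS_\nu y\},
\]
where $U,V$ are open, $\sfS_\mu|_U$ and $\sfS_\nu|_V$ are injective, and $\sfS_\mu(U)=\sfS_\nu(V)$. With respect to this topology $\GK$ is an étale groupoid, and these sets are open bisections. Directly from the definitions,
\[
  Z(a,b)=Z(U_a,\mu,\nu,U_b)\cap \bigl(r^{-1}(\sfS_\mu|_{U_a})^{-1}(D_a\cap D_b)\bigr).
\]
Therefore $Z(a,b)$ is an open bisection. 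The range map sends it homeomorphically onto $(\sfS_\mu|_{U_a})^{-1}(D_a\cap D_b)$, which is compact since $D_a\cap D_b$ is compact. So each $Z(a,b)$ is a compact open bisection.

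\emph{Basis property.} Let $\gamma=(x,\mu-\nu,y)$ with $\sfS_\mu x=\sfS_\nu y=z$, and let $W$ be a neighbourhood of $\gamma$. I may assume $W=Z(U,\mu,\nu,V)$ with $U\ni x$ and $V\ni y$.
\begin{itemize}
\item First I replace $\mu,\nu$ by $\mu+\kappa,\nu+\kappa$. This is legitimate because $\sfS_\kappa$ of equal points are equal, by the semi-infinite version of~\eqref{eq:SS}.
\item Next, cylinder sets form a base, so for $\kappa$ large enough we have $\sfR_{\mu+\kappa}^{-1}(\sfR_{\mu+\kappa}x)\subseteq U$ and $\sfR_{\nu+\kappa}^{-1}(\sfR_{\nu+\kappa}y)\subseteq V$.
\item Put $a=\sfR_{\mu+\kappa}(x)$ and $b=\sfR_{\nu+\kappa}(y)$. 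Then $\gamma=(z'a,\mu-\nu,z'b)$ with $z'=\sfS_\kappa z$, so $\gamma\in Z(a,b)$.
\item Finally, every element $(wa,\mu-\nu,wb)$ of $Z(a,b)$ has $wa\in U$, $wb\in V$ and $\sfS_{\mu+\kappa}(wa)=w=\sfS_{\nu+\kappa}(wb)$, so it lies in $W$.
\end{itemize}
Hence $\gamma\in Z(a,b)\subseteq W$.

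This shows that the $Z(a,b)$ form a basis for the étale topology. Second countability is then immediate, since the pairs $(a,b)$ range over a countable set. I expect the main obstacle to be the basis step just described, together with the careful justification that $za$ is well defined and that $D_a$ is clopen. Everything else reduces to standard Deaconu--Renault facts.
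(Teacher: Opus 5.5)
Your route is the paper's: the paper gives no argument beyond citing the standard Deaconu--Renault topology (e.g.\ \cite{SimWil:primitive_ideals}) and noting that its basic sets are equivalent to the $Z(a,b)$. You are supplying that equivalence, and most of it is sound. This includes uniqueness of $za$, clopenness of $D_a$ via the local-homeomorphism property of $\sfS_\mu$ asserted in Section~\ref{sec:action}, compactness via $r$, and countability. There is, however, a real flaw in the basis step.

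Replacing $(\mu,\nu)$ by $(\mu+\kappa,\nu+\kappa)$ is legitimate for the point $\gamma$, but not for the neighbourhood. One has $W=Z(U,\mu,\nu,V)\subseteq Z(U,\mu+\kappa,\nu+\kappa,V)$, usually strictly, and $\sfS_{\mu+\kappa}|_U$ need not even be injective. For example, in the notation of Section~\ref{sec:SU2}, take $U=V=\crystal(\infty)$, $\mu=\nu=0$, $\kappa=\varpi$. Then the arrow $(b^\infty,0,b^\infty a)$ lies in the larger set but not in the unit space $W$.

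Your last bullet only checks $\sfS_{\mu+\kappa}(wa)=w=\sfS_{\nu+\kappa}(wb)$, which is membership in the larger set. So as written it does not give $Z(a,b)\subseteq W$; what you need is $\sfS_\mu(wa)=\sfS_\nu(wb)$. This is true, but it needs an observation you have not made. By \eqref{eq:SR},
$\sfS_\mu(a)=\sfR_\kappa(\sfS_\mu x)=\sfR_\kappa(z)=\sfR_\kappa(\sfS_\nu y)=\sfS_\nu(b)=:c\in\cB(\kappa)$,
so $a$ and $b$ have the same left-hand factor $c$. Using \eqref{eq:SR} and \eqref{eq:SS} again, both $\sfS_\mu(wa)$ and $\sfS_\nu(wb)$ lie in $\sfR_\kappa^{-1}(c)$ and are sent to $w$ by $\sfS_\kappa$. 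Your uniqueness paragraph then forces $\sfS_\mu(wa)=wc=\sfS_\nu(wb)$. With this inserted, $(wa,\mu-\nu,wb)\in W$ and the basis argument goes through.

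A smaller point: your definition of basic sets demands $\sfS_\mu(U)=\sfS_\nu(V)$, so $Z(U_a,\mu,\nu,U_b)$ is not itself basic unless $D_a=D_b$. Use instead $U=U_a\cap\sfS_\mu^{-1}(D_b)$ and $V=U_b\cap\sfS_\nu^{-1}(D_a)$. These are open with common image $D_a\cap D_b$, so $Z(a,b)=Z(U,\mu,\nu,V)$ exactly, and the intersection with $r^{-1}(\cdot)$ becomes redundant.
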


The proof can be found in many sources; see \emph{e.g.}, \cite{SimWil:primitive_ideals}, although note that the topology is described there via a family of basic sets which is different from but equivalent to the family we use here.  We also have the following.

\begin{theorem}[{\cite[Lemma~3.5]{SimWil:primitive_ideals}}]
The groupoid $\GK$ is amenable.
\end{theorem}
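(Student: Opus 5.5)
The amenability of $\GK$ should follow from the standard argument for Deaconu--Renault groupoids of actions of $\NN^r$ by local homeomorphisms, as in \cite[Lemma~3.5]{SimWil:primitive_ideals}. By the isomorphism $\bP^+\cong\NN^r$, the shift action~\eqref{eq:shift} is an action of $\NN^r$ by surjective local homeomorphisms on the compact totally disconnected space $\crystal(\infty)$. Surjectivity of each $\sfS_\mu$ comes from surjectivity of the finite shift maps: every $b\in\cB(\lambda-\mu)$ appears as a left factor of an element of the Cartan component.

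The key tool is the continuous cocycle
\[
c:\GK\to\bP\cong\ZZ^r,\qquad c(x,\mu-\nu,y)=\mu-\nu .
\]
It is locally constant, being constant on each basic bisection $Z(a,b)$ of Lemma~\ref{lem:topology}. Since $\ZZ^r$ is an amenable group, it suffices by Renault's result (e.g.\ Anantharaman-Delaroche--Renault, Proposition~5.1.1 / Spielberg's version for étale groupoids) to show that the kernel $c^{-1}(0)$ is an amenable groupoid.

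The kernel decomposes as the increasing union of the open subgroupoids
\[
\mathcal{R}_\mu=\{(x,0,y)\mid \sfS_\mu(x)=\sfS_\mu(y)\},\qquad \mu\in\bP^+ .
\]
This union exhausts $c^{-1}(0)$: if $(x,0,y)\in c^{-1}(0)$, then there exist $\mu,\nu\in\bP^+$ with $\mu=\nu$ and $\sfS_\mu(x)=\sfS_\nu(y)$, so $(x,0,y)\in\mathcal{R}_\mu$. The union is also directed, since $\mathcal{R}_\mu\subseteq\mathcal{R}_{\mu+\nu}$ by~\eqref{eq:SS}. Each $\mathcal{R}_\mu$ is the equivalence relation of the local homeomorphism $\sfS_\mu$ on a compact space, so it is a compact-open-fibred (proper) étale equivalence relation, and is therefore amenable. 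Concretely, its fibres are finite and uniformly bounded by $|\cB(\mu)|$, since $x$ is determined by $\sfS_\mu(x)$ together with $\sfR_\mu(x)$. Indeed, the element $x_\lambda$ is recovered from the pair $(\sfS_\mu x)_{\lambda-\mu}\otimes\sfR_\mu(x)$ in the Cartan component. An inductive limit of amenable open subgroupoids is amenable, so $c^{-1}(0)$ is amenable, and hence $\GK$ is amenable.

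The main point requiring care is that the topology on $\mathcal{R}_\mu$ inherited from $\GK$ agrees with its topology as an equivalence relation. This holds because $c^{-1}(0)\cap Z(a,b)$, for $a,b$ of equal degree $\mu$, are exactly the sets $\{(za,0,zb)\}$ forming a basis for $\mathcal{R}_\mu$. Alternatively, one can simply quote \cite{SimWil:primitive_ideals} once the hypotheses on the action are verified.
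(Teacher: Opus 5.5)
Your argument is correct and is essentially the standard proof of \cite[Lemma~3.5]{SimWil:primitive_ideals}, which the paper simply quotes without giving an argument of its own. That proof takes the kernel of the degree cocycle into $\ZZ^r$, writes it as the increasing union of the compact open, hence proper and amenable, equivalence relations $\mathcal{R}_\mu=\bigsqcup_{a,b\in\cB(\mu)}Z(a,b)$, and then uses amenability of $\ZZ^r$; surjectivity of the $\sfS_\mu$, which you check, is true but not needed.
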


\subsection{Semi-infinite crystals correspond to semi-infinite Cartan words}
\label{sec:words}

In order to underscore the relation of the above dynamical system with symbolic dynamics, let us reinterpret the semi-infinite crystal $\crystal(\infty)$ as a space of semi-infinite words in an alphabet given by finite crystals.  This point of view will be useful at many points.

\begin{definition}
  \label{def:Cartan_words}
  Let $(\mu_j)_{j=0}^\infty$ be a sequence of dominant weights.  We will be particularly interested in the case where all the $\mu_j$ are equal, and are strictly dominant, but we do not want to limit ourselves to that situation.

  A finite word
  \begin{equation}
  \label{eq:finite_word}
    x_nx_{n-1} \ldots x_2x_1x_0 \ \in\  \prod_{j=0}^n \cB(\mu_j)
  \end{equation}
  will be called a \emph{Cartan word} if $x_n\otimes\cdots\otimes x_0$ belongs to the Cartan component of $\cB(\mu_n)\otimes\cdots\otimes\cB(\mu_0)$.
  A semi-infinite word
  \begin{equation}
  \label{eq:semi-infinite_word}
    x= \cdots x_3x_2x_1x_0 \ \in\ \prod_{j=0}^\infty \cB(\mu_j)
  \end{equation}
  will be called a \emph{semi-infinite Cartan word} if every finite subword is a Cartan word.
  We will write $\emptyset$ for the unique element of the trivial crystal $\cB(0)$, and view it as the empty word.
\end{definition}

We can identify finite Cartan words with finite crystal elements.  Specifically, if $\mu=\sum_{j=0}^n \mu_j$, then for a Cartan word as in~\eqref{eq:finite_word}, we write
\[
  x=x_n\ldots x_1 x_0 \ \in \ \cB(\mu),
\]
when $x\in\cB(\mu)$ corresponds to $x_n\otimes\cdots\otimes x_0$ under the inclusion of the Cartan component $\cB(\mu) \into \cB(\mu_n) \otimes\cdots\otimes\cB(\mu_0)$.  Any $x\in\cB(\mu)$ decomposes uniquely as a finite Cartan word in this way, and the components are given by
\begin{equation}
  \label{eq:letter}
  x_k = \sfR_{\mu_k}\circ\sfS_{\sum_{j=0}^{k-1}\mu_j}(x).
\end{equation}

Similarly, we can identify semi-infinite Cartan words with elements of the semi-infinite crystal $\crystal(\infty)$, as follows.

\begin{proposition}
  \label{prop:words}
Let $(\mu_j)_{j\in\NN}$ be a sequence of dominant weights which is {additively cofinal}, in the sense that
\begin{equation}
  \label{eq:cofinal}
  \forall \lambda \in \bP^+, ~ \exists n\in\NN \text{ such that } \lambda \leq \sum_{j=0}^n \mu_j.
\end{equation}
There is a one-to-one correspondence between the set of semi-infinite Cartan words in $\prod_{j\in\NN}\cB(\mu_j)$ and the semi-infinite crystal $\crystal(\infty)$, given by
\begin{equation}
  \label{eq:word-decomposition}
  \ldots x_3 x_2 x_1 x_0 \ \mapsto\  x=(b_\lambda)_{\lambda\in\bP^+},
\end{equation}
where
\begin{equation}
  b_\lambda = \sfR_\lambda(x_nx_{n-1}\ldots x_1x_0)
\end{equation}
for any $n$ large enough so that $\lambda \leq \sum_{j=0}^n \mu_j$.
\end{proposition}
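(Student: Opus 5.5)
The plan is to construct the map and its inverse explicitly, checking well-definedness with the relations \eqref{eq:RR}--\eqref{eq:SR}. Throughout, write $\sigma_n = \sum_{j=0}^n \mu_j$. A finite Cartan word $x_n\ldots x_0$ is identified with an element of $\cB(\sigma_n)$, and I write $x^{(n)}\in\cB(\sigma_n)$ for the element attached to the truncation $x_n\ldots x_0$ of a semi-infinite Cartan word.

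\emph{Step 1: compatibility of the truncations.} First I check that $\sfR_{\sigma_n}(x^{(m)}) = x^{(n)}$ for $m\geq n$. Since $\sigma_m - \sigma_n = \sum_{j=n+1}^m \mu_j$, the inclusion of the Cartan component of $\cB(\mu_m)\otimes\cdots\otimes\cB(\mu_0)$ factors through the Cartan component of $\cB(\sigma_m-\sigma_n)\otimes\cB(\sigma_n)$. Concretely, the Cartan component of the $(m+1)$-fold product sits inside the product of the Cartan components of the two blocks. This holds because the highest weight element $b_{\mu_m}\otimes\cdots\otimes b_{\mu_0}$ generates it, and it lies in the product of the two Cartan blocks. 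Hence projecting $x^{(m)}$ to the second leg gives the element corresponding to $x_n\otimes\cdots\otimes x_0$, which is $x^{(n)}$.

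\emph{Step 2: the map is well defined.} Given $\lambda$, cofinality \eqref{eq:cofinal} yields some $n$ with $\lambda\leq\sigma_n$. If $m\geq n$, then by \eqref{eq:RR} and Step 1,
\[
\sfR_\lambda(x^{(m)}) = \sfR_\lambda\sfR_{\sigma_n}(x^{(m)}) = \sfR_\lambda(x^{(n)}).
\]
So $b_\lambda$ does not depend on $n$. The compatibility $\sfR_\mu(b_\lambda)=b_\mu$ for $\mu\leq\lambda$ then follows from \eqref{eq:RR} applied in $\cB(\sigma_n)$, with $n$ chosen large for $\lambda$. Thus $(b_\lambda)_\lambda$ lies in $\crystal(\infty)$.

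\emph{Step 3: the inverse.} Given $(b_\lambda)\in\crystal(\infty)$, let $x^{(n)} = b_{\sigma_n}\in\cB(\sigma_n)$. Decompose it via \eqref{eq:letter} into a Cartan word $x_n\ldots x_0$. The key point is that the letters $x_k$ for $k\leq n$ do not depend on $n$. Indeed, $b_{\sigma_n} = \sfR_{\sigma_n}(b_{\sigma_m})$, and by Step 1 the truncation $\sfR_{\sigma_n}$ of a Cartan word of length $m+1$ is its terminal subword of length $n+1$. Hence the letters assemble into one semi-infinite word $\cdots x_1x_0$.

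Every finite subword $x_k\ldots x_l$ of this word is Cartan. It is a subword of the Cartan word $x_k\ldots x_0$, and a contiguous subword of a Cartan word is again Cartan. This follows from the same block argument: the Cartan component of a triple product lies in the product of Cartan components of any grouping into consecutive blocks.

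\emph{Step 4: the two constructions are mutually inverse.} Starting from a word, the construction gives $b_{\sigma_n} = x^{(n)}$ by Step 2, and decomposing this returns the original letters. Conversely, starting from $(b_\lambda)$, the word built in Step 3 reproduces each $b_{\sigma_n}$. Then $b_\lambda=\sfR_\lambda(b_{\sigma_n})$ by the inverse-system property, since cofinality provides $n$ with $\lambda\leq\sigma_n$.

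The main obstacle is the block-factorisation claim used in Steps 1 and 3: Cartan components are compatible with regrouping of tensor factors, and subwords of Cartan words are Cartan. I would prove this by noting three facts. The tensor product of crystals is associative. The Cartan component is the connected component of the tensor product of highest weight elements. The tensor product of two connected components containing the relevant highest weight elements contains that whole connected component, because it is a union of components of the full product. Everything else is bookkeeping with \eqref{eq:RR}.
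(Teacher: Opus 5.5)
Your proof is correct and follows essentially the same route as the paper: realise $b_{\sigma_n}$ as a finite Cartan word, show that the letters do not depend on $n$, and check that the two constructions are mutually inverse. The only difference is that you derive the independence directly from the block factorisation of Cartan components, and you spell out that the resulting word is Cartan and that the forward map is well defined. The paper instead invokes \eqref{eq:letter} and the relations \eqref{eq:RR}--\eqref{eq:SR}, which rest on that same factorisation.
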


\begin{proof}

Let $x = (b_\lambda)_{\lambda\in\bP^+}\in\crystal(\infty)$.  Let us write $\lambda_n = \sum_{j=0}^n\mu_j$. Then for any $n$, we can realise $b_{\lambda_n}$ as a finite Cartan word,
\[
  b_{\lambda_n} = x_n\ldots x_1 x_0 \qquad \text{with }x_j \in \cB(\mu_j),
\]
where according to Equation~\eqref{eq:letter},
\[
  x_k = \sfR_{\mu_k}\circ\sfS_{\sum_{j=0}^{k-1}\mu_j}(b_{\lambda_n}).
\]
The compatibility relations \eqref{eq:RR}--\eqref{eq:SR} ensure that this  $x_k$ is independent of the word length $n$.  It is then straightforward to check that the map $x\mapsto \ldots x_3x_2x_1x_0$ is inverse to~\eqref{eq:word-decomposition}.
\end{proof}

\subsection{A Cartan braiding criterion for Cartan words}
\label{sec:braiding}

We will take a brief digression to expand upon what it means for a semi-infinite word $x=\ldots x_2 x_1 x_0$ to be a Cartan word.
We begin by recalling a useful technical tool from \cite{MatYun:crystals}.  For an antecedent to this, see \cite{EtiSchSol}.

\begin{definition}(\cite[Definition~5.4]{MatYun:crystals})
  \label{def:Cartan_braiding}
  Let $\mu,\nu\in\bP^+$, and let us write
  \[
    \iota_{\mu,\nu}:\cB(\mu+\nu) \to \cB(\mu)\otimes\cB(\nu)
  \]
  for the inclusion of the Cartan component.  The \emph{Cartan braiding} is the map
  \begin{equation*}
    \sigma = \sigma_{\cB(\mu),\cB(\nu)}:(\cB(\mu) \otimes \cB(\nu))\sqcup\{0\}  \to (\cB(\nu) \otimes \cB(\mu))\sqcup\{0\}
  \end{equation*}
  defined by
  \begin{equation*}
    \sigma: a\otimes b  \mapsto
    \begin{cases}
      \iota_{\nu,\mu}\circ\iota_{\mu,\nu}^{-1}(a\otimes b) &
        \text{if $a\otimes b$ is in the Cartan component,} \\
      0 & \text{otherwise.}\\

    \end{cases}
  \end{equation*}

  If $\mu_n,\ldots,\mu_0 \in \bP^+$, we write
  \begin{multline*}
    \sigma_j : \cB(\mu_n) \otimes \cdots \otimes \cB(\mu_{j}) \otimes \cB(\mu_{j-1}) \otimes \cdots \otimes \cB(\mu_0) \\
    \to
    \cB(\mu_n) \otimes \cdots \otimes \cB(\mu_{j-1}) \otimes \cB(\mu_{j}) \otimes \cdots \otimes \cB(\mu_0) \; \sqcup\{0\}
  \end{multline*}
  for the action
  \[
  \sigma_j = \id_{\mu_n} \otimes \cdots \otimes\id_{\mu_{j+1}} \otimes \sigma \otimes \id_{\mu_{j-2}} \otimes \cdots \otimes\id_{\mu_0}
  \]
  of the Cartan braiding on the $j$th and $(j\!-\!1)$th tensor legs.
\end{definition}

The utility of the Cartan braiding comes from the following property from \cite[Proposition~ 6.7]{MatYun:crystals}.  Our ordering convention here is reversed compared to \cite{MatYun:crystals}, since we are enumerating tensor products from right to left, but the proof is the same.

\begin{proposition}
  \label{prop:Cartan_test}
  A tensor product $b_n\otimes\cdots\otimes b_0$ is in the Cartan component if and only if, for all $k=1,\ldots,n$, we have
  \[
    \sigma_n\circ\sigma_{n-1}\circ\cdots\circ\sigma_k(b_n\otimes\cdots\otimes b_0) \neq 0.
  \]
\end{proposition}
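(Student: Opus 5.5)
The plan is to argue by induction on $n$, treating the crystal as a combinatorial object governed by Kashiwara's tensor rule. Write $\lambda_k=\sum_{j=0}^{k}\mu_j$. The composite $\sigma_n\circ\cdots\circ\sigma_k$ moves the leg $b_n$ rightwards past $b_{n-1},\ldots,b_{k-1}$. It is nonzero exactly when each successive pair, formed by the migrating letter (transported through the Cartan components) and the next letter, lies in the relevant Cartan component.

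The key intermediate claim is the following. For $b'=b_{n-1}\otimes\cdots\otimes b_0$ in the Cartan component $\cB(\lambda_{n-1})$, the tensor $b_n\otimes b'$ lies in the Cartan component of $\cB(\mu_n)\otimes\cB(\lambda_{n-1})$ if and only if the tensor $b_n\otimes b_{n-1}\otimes\cdots\otimes b_0$ is Cartan. This holds because $\cB(\lambda_{n-1})$ sits inside the product as a subcrystal, and the component of $\cB(\mu_n)\otimes\cB(\lambda_{n-1})$ with highest weight $\lambda_n$ is the Cartan component of the full product. By induction, the statement therefore reduces to two facts:
\begin{enumerate}
\item $b_{n-1}\otimes\cdots\otimes b_0$ is Cartan, which is the conditions for $k\le n-1$ after reindexing;
\item $b_n\otimes b'$ lies in the Cartan component of a two-fold product $\cB(\mu_n)\otimes\cB(\lambda_{n-1})$, where $b'$ is itself a Cartan word.
\end{enumerate}

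For the two-fold step I would prove a lemma. Let $b'=c\otimes d\in\cB(\mu)\otimes\cB(\nu)$ be Cartan. Then $a\otimes b'$ is Cartan in $\cB(\lambda)\otimes\cB(\mu+\nu)$ if and only if $a\otimes c$ is Cartan and $\sigma(a\otimes c)\otimes d$ is Cartan in the appropriate sense. I would prove this by associativity of the Cartan component, using the commutativity of $\iota$-inclusions, together with the fact that $\sigma$ intertwines the Kashiwara operators on Cartan components. This is where the composite $\sigma_n\circ\cdots\circ\sigma_k$ appears. Iterating the lemma yields exactly the condition that each partial composite is nonzero. The subtle point is that the conditions for $k<n$ involve $\sigma$ applied to letters that are already partly braided, so I must check that these compositions correspond to Cartan-ness of the subwords in the order the statement requires.

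The main obstacle is the lemma itself. Cartan-ness of $a\otimes c$ together with Cartan-ness after braiding must force $a\otimes c\otimes d$ into the Cartan component. I would prove it by characterising the Cartan component as the set of elements reachable from $u_\lambda\otimes u_\mu\otimes u_\nu$ by the operators $f_i$. Then, by applying the operators $e_i$ and tracking, via the tensor rule and signature rule, where each $e_i$ acts, I would show that a non-Cartan element reaches a highest-weight vector of lower weight. At that vector some adjacent braiding must vanish, using the compatibility of $\sigma$ with $e_i$ on Cartan components. Since this is \cite[Proposition~6.7]{MatYun:crystals} with reversed ordering, I would ultimately defer the detailed verification to that reference, adapting its indices to the right-to-left enumeration.
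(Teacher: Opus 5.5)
Your final step, deferring to \cite[Proposition~6.7]{MatYun:crystals} with the indexing reversed, is exactly what the paper does; it gives no argument beyond that citation. The induction you sketch beforehand would not work as written, because it misreads the composite. In $\sigma_n\circ\cdots\circ\sigma_k$ the map $\sigma_k$ acts first. So it is the letter in position $k-1$ that is carried \emph{leftwards} past $b_k,\ldots,b_n$, not $b_n$ moving rightwards. Compare the proof of Lemma~\ref{lem:relations}(e), where $\sigma_2\circ\sigma_1(z_\lambda\otimes a'\otimes a)=\sigma_2(z_\lambda\otimes b'\otimes b)$.

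Every one of the $n$ conditions ends with $\sigma_n$ and therefore involves $b_n$. Hence the conditions for $k\le n-1$ are \emph{not} the criterion for the suffix $b_{n-1}\otimes\cdots\otimes b_0$, and your reduction (1) fails, under your reading as well as the correct one. What is true is that $\sigma_2,\ldots,\sigma_n$ only touch legs $n,\ldots,1$. So the conditions $k=2,\ldots,n$ are precisely the inductive criterion for the \emph{prefix} $b_n\otimes\cdots\otimes b_1$, and you must peel off $b_0$ rather than $b_n$.

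Concretely, suppose $c=b_n\cdots b_1\in\cB(\mu_n+\cdots+\mu_1)$ is Cartan. Then $b_n\otimes\cdots\otimes b_0$ is Cartan iff $c\otimes b_0$ is Cartan, by your associativity remark applied on the other side. That holds iff $\sigma_{\cB(\mu_n+\cdots+\mu_1),\cB(\mu_0)}(c\otimes b_0)\neq0$. The remaining condition $k=1$ is matched to this by the factorisation
\[
  \sigma_{\cB(\mu_n+\cdots+\mu_1),\cB(\mu_0)}=\sigma_n\circ\cdots\circ\sigma_1 \quad\text{on } \cB(\mu_n+\cdots+\mu_1)\otimes\cB(\mu_0),
\]
which is an identity of the kind used in the proof of Proposition~\ref{prop:SxR}(ii). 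Your two-fold lemma, which braids $a$ rightwards through $c\otimes d$, is the mirror-image factorisation. Iterated over suffixes, it proves the mirror criterion ``$\sigma_1\circ\cdots\circ\sigma_m(b_n\otimes\cdots\otimes b_0)\neq0$ for $m=1,\ldots,n$'', not the stated one.

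Second, the ingredients you list for the two-fold step only give the easy half. These are associativity of Cartan components and the fact that $\sigma$ is a crystal isomorphism between Cartan components. On the Cartan component, both sides of the factorisation are then crystal isomorphisms matching highest-weight elements, so they agree and are nonzero.

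The substantive half is that the composite \emph{vanishes} off the Cartan component, and this does not follow from your list. The non-vanishing locus of the composite is a subcrystal on which it is a crystal embedding. A non-Cartan highest-weight element lying there would simply be sent to a highest-weight element of the same weight, and nothing you list excludes this. ``Compatibility of $\sigma$ with $e_i$'' cannot by itself produce a zero.

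Some genuinely extra input is needed here. One option is the hexagon relation for the braiding of $\cU_q(\lie{k}_\CC)$-modules, passed to the $q\to0$ limit; the Cartan braiding is such a limit, as in the proof of Lemma~\ref{lem:braiding_order}. Another is the ordering property of Lemma~\ref{lem:braiding_order}, combined with a description of which products $b_e(\lambda)\otimes b$ are Cartan. Your sketch locates the difficulty correctly but does not supply this input.
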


This simplifies dramatically if the weights $\mu_j$ are all equal, because then the Cartan braiding on $\cB(\mu)\otimes\cB(\mu)$ is just the projection onto the Cartan component:
\begin{equation}
\label{eq:BxB_braiding}
  \sigma(b\otimes c) =
  \begin{cases}
    b\otimes c & \text{if $b\otimes c$ is in the Cartan component},\\
    0 & \text{otherwise}.
  \end{cases}
\end{equation}
Consequently, Proposition~\ref{prop:Cartan_test} implies that a word
\[
  x_n\ldots x_0 \in \prod_{j=0}^n \cB(\mu)
\]
is a Cartan word if and only if each successive pair $x_j\otimes x_{j-1}$ is in the Cartan component.

It follows that the same is true for semi-infinite Cartan words.  As a result, we obtain the following.   Recall that a weight $\mu\in\bP^+$ is \emph{strictly dominant} if
$\mu \geq \varpi_i$ for every fundamental weight $\varpi_i$.

\begin{corollary}
  \label{cor:subshift}
  Let $\mu\in\bP^{+}$ be strictly dominant.  There is a one-to-one correspondence
  \begin{multline}
  \label{eq:subshift}
    \crystal(\infty) \cong \{ \ldots x_2 x_1 x_0 \in \prod_{j=0}^\infty \cB(\mu) \mid \text{ $x_{j+1}\otimes x_{j}$ is in the Cartan}\\
    \text{component of $\cB(\mu) \otimes\cB(\mu)$ for every $j\in\NN$ } \}
  \end{multline}
\end{corollary}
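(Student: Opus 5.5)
The plan is to apply Proposition~\ref{prop:words} with the constant sequence $\mu_j=\mu$ for all $j\in\NN$, and then to use Proposition~\ref{prop:Cartan_test}, simplified via \eqref{eq:BxB_braiding}, to turn ``every finite subword is a Cartan word'' into the adjacent-pair condition.

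First I will check that the constant sequence is additively cofinal in the sense of \eqref{eq:cofinal}. Let $\lambda=\sum_i m_i\varpi_i\in\bP^+$. Since $\mu$ is strictly dominant, $\mu\geq\varpi_i$ for each $i$, so $\mu-\varpi_i\in\NN\Simples$. Put $n+1=\sum_i m_i$. Then
\[
(n+1)\mu-\lambda=\sum_i m_i(\mu-\varpi_i)
\]
is a nonnegative combination of simple roots, and hence $\lambda\leq\sum_{j=0}^n\mu_j$. (If $\lambda=0$ we may take $n=0$.) Proposition~\ref{prop:words} then gives a bijection between $\crystal(\infty)$ and the semi-infinite Cartan words in $\prod_{j}\cB(\mu)$.

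It remains to show that a semi-infinite word $\cdots x_1x_0$ is Cartan if and only if $x_{j+1}\otimes x_j$ lies in the Cartan component for every $j$. The forward direction is immediate, since each pair $x_{j+1}x_j$ is itself a finite subword.

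For the converse, fix a finite subword $x_n\cdots x_m$. After reindexing it is a word in $\prod\cB(\mu)$. By \eqref{eq:BxB_braiding}, each $\sigma_j$ acts either as the identity or as $0$, according to whether legs $j$ and $j-1$ form a Cartan pair. The composite $\sigma_n\circ\cdots\circ\sigma_k$ therefore returns the word unchanged exactly when all the relevant adjacent pairs are Cartan, and returns $0$ otherwise. Proposition~\ref{prop:Cartan_test} then shows that the subword is Cartan. This is the consequence already recorded in the paragraph preceding the corollary, so no further work is needed there.

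The only point needing care is the cofinality check, which is where strict dominance is used. Everything else is a direct citation of the earlier results.
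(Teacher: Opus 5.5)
Your route is the same as the paper's: take the constant sequence in Proposition~\ref{prop:words}, then use Proposition~\ref{prop:Cartan_test} simplified by \eqref{eq:BxB_braiding}. The braiding half of your argument is fine. The problem is in the cofinality check, which is the one step you flagged as delicate.

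In this paper, $\leq$ on $\bP$ is the order induced by the semigroup $\bP^+$: $\lambda\leq\lambda'$ means $\lambda'-\lambda\in\bP^+$. This is what makes $\sfR_\lambda$ defined on $\cB(\lambda')$ via $\cB(\lambda'-\lambda)\otimes\cB(\lambda)$. The root order is the separate relation $\preceq$ introduced in \S\ref{sec:Cartan_order}. So strict dominance means $\mu-\varpi_i\in\bP^+$, not $\mu-\varpi_i\in\NN\Simples$.

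Your version fails at two points:
\begin{enumerate}
\item The claim $\mu-\varpi_i\in\NN\Simples$ is false in general. For $\SU(2)$ with $\mu=2\varpi$, one gets $\mu-\varpi=\varpi=\tfrac12\alpha_1\notin\NN\Simples$.
\item The inference ``a nonnegative combination of simple roots, hence $\lambda\leq(n+1)\mu$'' is also invalid, because elements of $\NN\Simples$ need not be dominant. In type $A_2$, for instance, $\alpha_1=2\varpi_1-\varpi_2$.
\end{enumerate}

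The repair is one line. Using $\mu-\varpi_i\in\bP^+$, your identity $(n+1)\mu-\lambda=\sum_i m_i(\mu-\varpi_i)$ exhibits $(n+1)\mu-\lambda$ as an element of $\bP^+$, since $\bP^+$ is closed under addition. That is exactly \eqref{eq:cofinal}. With this correction your argument is complete and agrees with the paper's.
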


This shows that the semi-infinite crystal is really a higher-rank analogue of a subshift of finite type.
We will see later (Section~\ref{sec:orbits}) that all semi-infinite Cartan words in $\prod_{j\in\NN}\cB(\mu)$ are eventually constant.

\subsection{Example: The crystal groupoid of \texorpdfstring{$SU(2)$}{SU(2)}}
\label{sec:SU2}

Consider the case of $K=SU(2)$.
The fundamental weight $\varpi=1$ is strictly dominant, so by Corollary~\ref{cor:subshift}, $\crystal(\infty)$ is the set of semi-infinite Cartan words in $\cB(\varpi)^\infty$.

Let us denote the elements of the fundamental crystal as $a$ and $b$, with $a$ being the highest-weight element.  The Cartan component of the tensor product $\cB(\varpi)\otimes\cB(\varpi)$ is
\[
  \{ a\otimes a, b\otimes a, b\otimes b \};
\]
this follows from Kashiwara's tensor product rule, see \emph{e.g.} \cite[Theorem~4.4.1]{HonKan}.  Therefore, a semi-infinite word in $\cB(\varpi)^\infty$ is a Cartan word if and only if it never contains the subword $ab$.  We obtain
\begin{equation}
  \label{eq:B_SU2}
  \crystal(\infty) = \{ b^\infty a^n \mid n \in \NN\} \sqcup \{a^\infty\} .
\end{equation}
We therefore have a bijection
\begin{equation}
  \label{eq:N_infty}
  \crystal(\infty) \stackrel{\cong}{\longrightarrow} \NN_\infty := \NN \sqcup \{\infty\},
\end{equation}
given by counting the number of occurrences of $a$.  Moreover, the bijection~\eqref{eq:N_infty} is a homeomorphism, where $\NN_\infty$ is endowed with its usual topology as the one-point compactification of the discrete set $\NN$.

Let us identify $\bP \cong \ZZ$ and $\bP^+ \cong \NN$, via $k\varpi\mapsto k$.  The shift action of $\bP^+$ on $\crystal(\infty)$ is
\begin{equation}
  \label{eq:S_SU2}
  \sfS_k : n\mapsto \max(n-k,0),
\end{equation}
with the convention that $\infty-k=\infty$. For any natural numbers $m'\geq m$ and $n'\geq n$, we have 
\[
\sfS_{m'} (b^\infty a^m) = \sfS_{n'} (b^\infty a^n),
\]
and so $(m,m'-n',n) \in \cG(SU(2))$. Hence, the crystal groupoid is
\begin{align}
  \label{eq:G_SU2}
  \cG(\SU(2)) &= \{(m,k,n) \in \NN_\infty\times\ZZ\times\NN_\infty \mid
  \nonumber \\
  &\hspace{1cm}\text{ $m$ and $n$ are either both finite or both infinite}\}
  \nonumber
  \\
    &\cong (\NN^2 \sqcup \{\infty\}^2) \times \ZZ
    \rightrightarrows \NN_\infty,
\end{align}
where $\NN^2\rightrightarrows\NN$ and $\{\infty\}^2\rightrightarrows\{\infty\}$ are pair groupoids (the latter trivial) and $\ZZ$ is the additive group.  The topology on $\cG(\SU(2))$ is determined by the following facts:
\begin{itemize}
  \item under the inherited topologies, the subgroupoids $\NN^2\times\ZZ$ and\linebreak $\{\infty\}^2\times \ZZ$ are discrete, and $\{\infty\}^2\times \ZZ$ is closed,
  \item a sequence in $\NN^2\times\ZZ$ converges to $(\infty,k,\infty)$ if and only if it is eventually of the form $(n_i+k,k,n_i)$ with $n_i\to\infty$.
\end{itemize}

From this description, we see that the crystal groupoid of $\SU(2)$ has precisely two orbits, with representatives $b^\infty$ and $a^\infty$ respectively, and isotropy groups $\ZZ$ at every point.  Moreover, the orbit of $a^\infty$ lies in the closure of the orbit of $b^\infty$.  We will return to this structure in Section~\ref{sec:structure}.


\section{The crystal algebra}
\label{sec:algebra}

\subsection{The crystal groupoid algebra \texorpdfstring{$\Cc^\infty(\GK)$}{}}

The following definitions were made in the introduction.

\begin{definition}
    We write $\Cc^\infty(\GK)$ for the $*$-algebra of compactly supported locally constant functions on $\GK$ equipped with the convolution product and the involution
    \[
        f^*(g) = \overline{f(g^{-1})}, \qquad g=(x,\lambda,y)\in\GK.
    \]
    The $C^*$-completion is denoted $C^*(\GK)$.
\end{definition}

There is a canonical gauge action of the torus $T=\widehat{\bP}$ on the convolution algebra $\Cc^\infty(\GK)$ of the crystal groupoid, which we now recall.  A gauge action of $T$ on a $*$-algebra $\cA$ can be determined by a $\bP$-grading of $\cA$, \emph{i.e.} a decomposition
\begin{equation}
\label{eq:grading}
  \cA = \bigoplus_{\lambda\in\bP} \cA_\lambda,
\end{equation}
with
\[
  \cA_\lambda \cA_\mu \subseteq \cA_{\lambda+\mu}, \qquad \cA_\lambda^* \subseteq \cA_{-\lambda}.
\]
Specifically, writing $(z,\lambda)\mapsto z^\lambda$ for the canonical pairing $T \times \bP \to \TT$, the corresponding action satisfies $z \cdot a = z^\lambda a$ for $a \in \cA_\lambda$. 

In the case of the crystal groupoid algebra, the map $c : (x, \lambda, y) \mapsto \lambda$ is a continuous $\bP$-valued cocycle on $\GK$, and the $\lambda$-isotypical component of $\Cc^\infty(\GK)$ is
\begin{equation}
  \label{eq:CcG_gauge_action}
  \Cc^\infty(\GK)_\lambda = \{ f\in \Cc^\infty(\GK) \mid \supp(f) \subseteq c^{-1}(\lambda)\}.
\end{equation}
Throughout this article, we will focus on this $\bP$-grading rather than the gauge action, as is done for instance in \cite[\S{}4]{Aranda:Kumjian-Pask}.

\subsection{The crystallised function algebra \texorpdfstring{$\cO(K_0)$}{}}

The $*$-algebra $\cO(K)$ of regular functions on $K$ is spanned by the matrix coefficients of irreducible representations.  For a representation of highest weight $\mu\in\bP^+$, we will use the notation
\begin{equation}
  \label{eq:matrix_coeff}
  \ip{\xi|\cdot|\eta} : g \mapsto \xi (\pi_\mu(g)\eta), \qquad \eta\in V(\mu),\ \xi \in V(\mu)^*
\end{equation}
for these matrix coefficients.  The algebra $\cO(K)$ also has a $\bP$-grading, whose $\lambda$-isotypical component is
\begin{equation}
  \label{eq:OKq_gauge_action}
  \cO(K)_\lambda = \Vect \{ \ip{\xi|\cdot|\eta} \mid \text{ $\eta$ is a vector of weight $\lambda$}  \}.
\end{equation}

The $*$-algebra $\cO(K)$ admits a quantisation $\cO(K_q)$ to a family of compact quantum group algebras for $0<q<\infty$ with the same underlying vector space and the same $\bP$-grading.  In \cite{MatYun:crystals} it was shown that this family of $*$-algebras can be extended to $q=0$ and $q=\infty$ in such a way that their $C^*$-closures $C(K_q)$ form a continuous field of $C^*$-algebras.
Moreover, the limit algebras $\cO(K_0)\cong\cO(K_\infty)$ and $C(K_0)\cong C(K_\infty)$ are $\bP$-graded isomorphic to a Kumjian-Pask algebra and a higher-rank graph $C^*$-algebra, respectively; see \cite[Theorem~9.4]{MatYun:crystals}.

Since higher-rank graph $C^*$-algebras are isomorphic to the groupoid $C^*$-algebras of their infinite-path groupoids \cite{KumPas}, we immediately conclude that $C(K_0)$ is isomorphic to a groupoid $C^*$-algebra.  However, the higher-rank graph associated to $C(K_0)$ is somewhat complicated to describe, see for instance the diagrams at the end of \cite{MatYun:crystals} for type $C_2$.  This makes computing its path groupoid difficult.
The main theorem of this article is that the crystal groupoid $\GK$ defined above is the appropriate groupoid model.

The key to proving this is to determine a set of generators and relations for the crystal groupoid algebra.  If $a\in\cB(\mu)$ and $b\in\cB(\nu)$ are finite crystal elements, we will use the notation
\begin{equation}
  \label{eq:indicator}
  \mathbf{1}_{Z(a,b)} \in \Cc^\infty(\GK)
\end{equation}
to denote the indicator function of the basis clopen set $Z(a,b)$ from Equation~\eqref{eq:Z}.  Recall that $\emptyset\in\cB(0)$ denotes the unique element of the trivial crystal.

Given $\mu,\nu \in \bP^+$ and given $a \in \cB(\mu)$ and $b \in \cB(\nu)$, we write
\begin{align*}
    \CCE(a,b) = \{c \in \cB(\mu + \nu) :{}& c \text{ is a Cartan word and }c = a'a = b'b\\
     &\qquad\text{for some }a' \in \cB(\nu)\text{ and }b' \in \cB(\mu)\}
\end{align*}
for the set of \emph{common Cartan extensions} of $a$ and $b$ in the $(\mu\!+\!\nu)$-crystal.

\begin{lemma}
  \label{lem:relations}
Let $a\in\cB(\mu)$, $b\in\cB(\nu)$.  We have:
\begin{enumerate}
    \item[(a)]
    $\mathbf{1}_{Z(\emptyset,\emptyset)}=1$,
    \item[(b)]
    $\mathbf{1}_{Z(a,b)}^* = \mathbf{1}_{Z(b,a)}$,
    \item[(c)]
    $\mathbf{1}_{Z(\emptyset,a)}\mathbf{1}_{Z(\emptyset,b)} = \mathbf{1}_{Z(\emptyset,ab)}$ if $ab$ is a Cartan word, and $0$ otherwise.
    \item[(d)]
    $\mathbf{1}_{Z(a,\emptyset)}\mathbf{1}_{Z(\emptyset,b)} = \mathbf{1}_{Z(a,b)}$,
    \item[(e)]
    \(
      \ds\mathbf{1}_{Z(\emptyset,a)}\mathbf{1}_{Z(b,\emptyset)} = \!\!\! \sum_{a'a=b'b \in \CCE(a,b)} \!\!\!\!\!\!\mathbf{1}_{Z(a',b')}.
    \)
\end{enumerate}
Thus, $\bigcup_{\mu \in \bP^+} \big\{\mathbf{1}_{Z(\emptyset, a)} \mid a \in \cB(\mu)\big\}$ generates $\Cc^\infty(\GK)$ as a $*$-algebra.
\end{lemma}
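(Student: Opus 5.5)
Each of (a)–(e) is a direct computation in the convolution algebra, using the facts that each $Z(a,b)$ is a compact open bisection and that for bisections $U,V$ one has $\mathbf{1}_U\mathbf{1}_V=\mathbf{1}_{UV}$ and $\mathbf{1}_U^*=\mathbf{1}_{U^{-1}}$. Throughout I use the word picture of Proposition~\ref{prop:words}: an element of $\crystal(\infty)$ is written $za$ with $a\in\cB(\mu)$ its right truncation $\sfR_\mu$ and $z=\sfS_\mu(za)$. In particular, $Z(a,b)$ is the set of arrows $(x,\mu-\nu,y)$ with $\sfR_\mu(x)=a$, $\sfR_\nu(y)=b$ and $\sfS_\mu(x)=\sfS_\nu(y)$.

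\textbf{Items (a), (b), (d).} For (a), $Z(\emptyset,\emptyset)=\{(z,0,z)\}$ is the unit space, and its indicator is the unit of $\Cc^\infty(\GK)$; the unit space is compact because $\crystal(\infty)$ is. For (b), inversion sends $(za,\mu-\nu,zb)$ to $(zb,\nu-\mu,za)$, so $Z(a,b)^{-1}=Z(b,a)$. For (d), compose $(za,\mu,z)\in Z(a,\emptyset)$ with $(z,-\nu,zb)\in Z(\emptyset,b)$. The composable pairs are exactly those with the same $z$, and the set of such $z$ is the set where both $za$ and $zb$ are Cartan. The product set is therefore $Z(a,b)$.

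\textbf{Item (c).} The product $Z(\emptyset,a)Z(\emptyset,b)$ consists of $(z,-\mu,za)(za,-\nu,zab)=(z,-\mu-\nu,zab)$, subject to $za\in\crystal(\infty)$ and $(za)b\in\crystal(\infty)$.

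1. By Proposition~\ref{prop:words}, together with the observation that every finite subword of a Cartan word is Cartan, the element $zab$ lies in $\crystal(\infty)$ exactly when the semi-infinite word $z\,a\,b$ is Cartan, with $ab$ read as a word in $\cB(\mu+\nu)$.
2. If $ab$ is not a Cartan word, no such element exists and the product is $0$.
3. If $ab$ is Cartan, the product is $Z(\emptyset,ab)$, since the condition $za\in\crystal(\infty)$ is then automatic.

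\textbf{Item (e) (main obstacle).} Here $(z,-\mu,za)(wb,\nu,w)$ is composable when $za=wb=:x$. So the product bisection is
\[
\{(\sfS_\mu x,\nu-\mu,\sfS_\nu x)\mid \sfR_\mu x=a,\ \sfR_\nu x=b\}.
\]
I partition it according to $c=\sfR_{\mu+\nu}(x)\in\cB(\mu+\nu)$. The conditions $\sfR_\mu x=a$ and $\sfR_\nu x=b$ hold exactly when $c$ has right truncations $a$ and $b$, that is, when $c=a'a=b'b$ lies in $\CCE(a,b)$; this uses \eqref{eq:RR}.

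1. For such a $c$, write $x=yc$. Then $\sfS_\mu x=ya'$ and $\sfS_\nu x=yb'$, by \eqref{eq:SR} and \eqref{eq:letter}.
2. Hence the piece indexed by $c$ is contained in $Z(a',b')$.
3. Conversely, an arrow $(ya',\nu-\mu,yb')\in Z(a',b')$ must come from $x=yc$. This needs an argument: if $ya'$ and $yb'$ are both Cartan and $a'a=b'b=c$ is Cartan, then $yc$ is Cartan. I would prove this using Corollary~\ref{cor:subshift} with a strictly dominant letter, or alternatively via the Cartan braiding test of Proposition~\ref{prop:Cartan_test}.
4. Equivalently, I can phrase step 3 through the bijection between $\crystal(\infty)$ and Cartan words: the extension $y\cdot c$ is determined, since its truncations at $\mu$ and $\nu$ are given, and compatibility of $\sfR$ and $\sfS$ then forces the projective family to be consistent.
5. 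The pieces are disjoint, so the indicator of the union is the sum of the indicators.

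Making step 3 rigorous is where the real care is needed.

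\textbf{Generation.} By (b) and (d), $\mathbf{1}_{Z(a,b)}=\mathbf{1}_{Z(\emptyset,a)}^*\mathbf{1}_{Z(\emptyset,b)}$. The sets $Z(a,b)$ form a basis of compact open bisections by Lemma~\ref{lem:topology}, so their indicators span $\Cc^\infty(\GK)$. The reason is that every compact open set is a finite disjoint union of basic sets, and these basic sets can be refined to be disjoint, since intersections of cylinders are again unions of cylinders at a larger weight.
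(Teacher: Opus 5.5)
\textbf{Where the proposal stands.} Your treatment of (a)--(d) follows the paper's proof, as do the decomposition of the product in (e) into pieces indexed by $c\in\CCE(a,b)$ and the generation statement. Using $\mathbf{1}_U\mathbf{1}_V=\mathbf{1}_{UV}$ for bisections makes the uniqueness half of (e) automatic, where the paper checks it by hand.

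\textbf{The gap.} It is your step~3 of (e), which is the only non-formal point in the lemma and which you leave unproved. Given $y\in\crystal(\infty)$ with $ya',yb'\in\crystal(\infty)$ and $c=a'a=b'b$ Cartan, you must show $yc\in\crystal(\infty)$.
\begin{itemize}
\item Your step~4 does not address this. Compatibility of $\sfR$ and $\sfS$ shows that an extension of $ya'$ with truncation $c$ is unique \emph{if it exists}. Existence requires $\sfR_\lambda(y)\otimes a'\otimes a$ to lie in the Cartan component of $\cB(\lambda)\otimes\cB(\nu)\otimes\cB(\mu)$ for every $\lambda$. That is a genuine property of crystals, not a consequence of the projective-limit formalism.
\item Corollary~\ref{cor:subshift} does not apply directly either. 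Its consecutive-pairs criterion relies on all letters having the same weight, so that the Cartan braiding is just the projection~\eqref{eq:BxB_braiding}. Here $a,b'$ have weight $\mu$ and $a',b$ have weight $\nu$.
\end{itemize}

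\textbf{How to close it.} The route that works is the one you mention second, the Cartan braiding test, and it is exactly the paper's argument.
\begin{itemize}
\item Fix $\lambda$ and put $y_\lambda=\sfR_\lambda(y)$. Then $y_\lambda a'$ and $y_\lambda b'$ are Cartan words, being truncations of $ya'$ and $yb'$.
\item Apply Proposition~\ref{prop:Cartan_test} with $n=2$ to $y_\lambda\otimes a'\otimes a$.
\item For $k=2$: $\sigma_2(y_\lambda\otimes a'\otimes a)=\sigma(y_\lambda\otimes a')\otimes a\neq 0$, because $y_\lambda a'$ is Cartan.
\item For $k=1$: since $a'a=b'b$ in $\cB(\mu+\nu)$, we have $\sigma(a'\otimes a)=b'\otimes b$. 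Hence $\sigma_2\sigma_1(y_\lambda\otimes a'\otimes a)=\sigma(y_\lambda\otimes b')\otimes b\neq 0$, because $y_\lambda b'$ is Cartan.
\end{itemize}
So $y_\lambda a'a$ is a Cartan word for every $\lambda$, and hence $yc\in\crystal(\infty)$. This completes step~3 and therefore (e).
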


\begin{proof}
These are elementary calculations.  For (a), note that $Z(\emptyset,\emptyset)$ is the set of units of $\GK$.  Part (b) is direct from the definitions.  For (c), note that the composable pairs in $Z(\emptyset,a) \times Z(\emptyset,b)$ are precisely those of the form $((z,-\mu,za),(za,-\nu,zab))$ with $z\in\crystal(\infty)$. The proof of (d) is similar.

For (e), we observe that elements in $Z(\emptyset,a)$ and $Z(b,\emptyset)$ can be written, respectively, in the form
\begin{align*}
  &(za',-\mu,za'a), && \text{for some $z\in\crystal(\infty)$, $a'\in\cB(\nu)$,}\\
  &(z'b'b,\nu,z'b'), && \text{for some $z'\in\crystal(\infty)$, $b'\in\cB(\mu)$.}
\end{align*}
These elements are composable in $\GK$ if and only if $a'a=b'b$ and $z=z'$.  In this case, the composition is
\begin{equation}
\label{eq:decomposition}
(za',-\mu,za'a)(za'a,\nu,zb') = (za',\nu-\mu,zb').
\end{equation}
To complete the calculation, we need to check that every element $(za',\nu-\mu,zb')\in Z(a',b')$ admits a unique decomposition as a product in $Z(\emptyset,a)\times Z(b,\emptyset)$.  Certainly, if such a decomposition exists it is unique, since it must have the form~\eqref{eq:decomposition}.  Existence amounts to proving that $za'a$ is a Cartan word, \emph{i.e} that $z_\lambda a'a$ is a Cartan word for any finite crystal element $z_\lambda\in\cB(\lambda)$ such that $z_\lambda a'$ and $z_\lambda b'$ are both Cartan words.  For this, we use the Cartan braiding criterion of Proposition~\ref{prop:Cartan_test}.  Our hypothesis is equivalent to $\sigma(z_\lambda\otimes a')\neq0$ and $\sigma(z_\lambda\otimes b')\neq 0$.  But then
\[
  \sigma_2(z_\lambda\otimes a' \otimes a) \neq0,\quad\text{and}\quad
  \sigma_2\circ\sigma_1(z_\lambda\otimes a'\otimes a) = \sigma_2(z_\lambda\otimes b'\otimes b) \neq 0,
\]
from which we see that $z_\lambda\otimes a'\otimes a$ is in the Cartan component.  The result follows.
\end{proof}

On the function algebra side, let us introduce the notation
\begin{equation}
  \label{eq:highest_weight_vector}
  b_e(\mu) \in \cB(\mu)
\end{equation}
for the highest-weight element of the irreducible crystal $\cB(\mu)$.  This notation is chosen because we will later use
\begin{equation}
  \label{eq:extremal_weight_vector}
  b_w(\mu) \in \cB(\mu)
\end{equation}
to denote the extremal element of weight $w\mu$ where $w\in W$ is any element of the Weyl group.

By \cite[\S{}4]{MatYun:crystals}, the crystallised function algebra $\cO(K_0)$ is generated as a $*$-algebra by the matrix coefficients of the form
\begin{equation}
  \label{eq:fa}
  f_a := \ip{a\mid\cdot\mid b_e(\mu)} \quad \in\cO(K_0),
\end{equation}
with $a\in\cB(\mu)$.
To be more accurate, the notation~\eqref{eq:fa} refers to the $q=0$ limit of the matrix coefficients in $\cO(K_q)$ corresponding to any vectors in the crystal basis for $V(\mu)$ which specialise to $a$ and $b_e(\mu)$ at $q=0$.  We refer to \cite[\S{}4--5]{MatYun:crystals} for the details.

\begin{theorem}
\label{thm:isomorphism}
There is a $\bP$-graded $*$-isomorphism
    $\cO(K_0)\cong \Cc^\infty(\GK)$ and $C(K_0)\cong C^*(\GK)$, where $\cO(K_0)$ and $C(K_0)$ are equipped with the opposite $\bP$-grading to~\eqref{eq:OKq_gauge_action}.  The isomorphism is given on generators by
    \begin{equation}
    \label{eq:isomorphism}
      f_a  \mapsto \mathbf{1}_{Z(\emptyset,a)}
    \end{equation}
    for every $a\in\cB(\mu)$.
\end{theorem}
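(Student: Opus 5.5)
We will build the isomorphism from a presentation of $\cO(K_0)$ by generators and relations. By \cite[\S{}4--5]{MatYun:crystals}, $\cO(K_0)$ is generated by the $f_a$, and the relations among them are the crystal limits of the quantum group relations. These are: $f_{b_e(0)}=1$; $f_af_b=f_{ab}$ if $ab$ is a Cartan word and $0$ otherwise, which comes from the Cartan component projection at $q=0$; and the exchange relation
\[
  f_a^*f_b=\sum_{a'a=b'b\in\CCE(a,b)} f_{a'}f_{b'}^*,
\]
which comes from the crystal limit of the $R$-matrix commutation relations. Here the Cartan braiding of Definition~\ref{def:Cartan_braiding} should be exactly what appears. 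This presentation is the one underlying the Kumjian--Pask description in \cite[Theorem~9.4]{MatYun:crystals}.

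\textbf{Step 1 (a homomorphism).} Lemma~\ref{lem:relations}(a)--(e) shows that the elements $\mathbf 1_{Z(\emptyset,a)}$ satisfy exactly these relations. Note that $\mathbf 1_{Z(\emptyset,a)}^*=\mathbf 1_{Z(a,\emptyset)}$, and that $\mathbf 1_{Z(a',\emptyset)}\mathbf 1_{Z(\emptyset,b')}=\mathbf 1_{Z(a',b')}$ by (d). Hence \eqref{eq:isomorphism} extends to a $*$-homomorphism $\Phi:\cO(K_0)\to\Cc^\infty(\GK)$. The final assertion of Lemma~\ref{lem:relations} shows that $\Phi$ is surjective. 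For the grading: $f_a$ has weight $\mu$ for $a\in\cB(\mu)$ in \eqref{eq:OKq_gauge_action}, while $Z(\emptyset,a)\subseteq c^{-1}(-\mu)$. So $\Phi$ is graded for the opposite grading.

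\textbf{Step 2 (injectivity).} Rather than rederiving the presentation, we would use a spanning set and a dimension or independence count. The relations let us rewrite every element of $\cO(K_0)$ in normal form as a combination of $f_af_b^*$. Using (c), these can be taken with $a,b\in\cB(\lambda)$ of the same highest weight $\lambda$, and for each $\lambda$ they are the matrix units spanning the image of $\mathrm{End}(V(\lambda))$. The images $\mathbf 1_{Z(b,a)}$ are indicator functions of compact open bisections.

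The cleanest route is via a faithful conditional expectation and the gauge-invariant uniqueness theorem. $\Phi$ is $\bP$-graded, so it suffices to show injectivity on the degree-zero part $\cO(K_0)_0$. That part is an increasing union of finite-dimensional algebras spanned by $f_af_b^*$ with $a,b\in\cB(\lambda)$. We would check that these form a system of matrix units in $\lambda$-filtration: $\{f_af_a^*: a\in\cB(\lambda)\}$ are orthogonal projections summing to $1$ that refine as $\lambda$ increases. On the groupoid side, the $Z(a,a)$ for $a\in\cB(\lambda)$ are nonempty, disjoint and cover $\crystal(\infty)$. Nonemptiness holds because every finite crystal element extends to a semi-infinite Cartan word, for instance by prepending highest weight elements. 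So $\Phi$ maps minimal projections to nonzero elements and is injective on each finite-dimensional piece, hence on $\cO(K_0)_0$. A graded homomorphism that is injective in degree $0$, and for which $x^*x$ with $x$ homogeneous lies in degree $0$, is injective. This gives the algebraic isomorphism.

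\textbf{Step 3 ($C^*$-level).} $C(K_0)$ is the $C^*$-envelope of $\cO(K_0)$ and $\GK$ is amenable. We would apply the gauge-invariant uniqueness theorem: $C(K_0)$ carries a gauge action of $T$ compatible with $\Phi$, and $\Phi$ is injective on the fixed-point AF algebra, so the extension of $\Phi$ to $C(K_0)\to C^*(\GK)$ is injective. Surjectivity holds because $\Cc^\infty(\GK)$ is dense. One also needs that the universal norm on $\cO(K_0)$ coincides with the norm it gets from $C(K_0)$. Alternatively, we could combine \cite[Theorem~9.4]{MatYun:crystals} with \cite{KumPas}.

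We expect the main obstacle to be Step 2, namely confirming that the relations of Lemma~\ref{lem:relations} really generate all relations in $\cO(K_0)$. Equivalently, the degree-zero matrix-unit structure must be the same on both sides, which requires knowing that the products $f_af_a^*$ are the nonzero projections from \cite{MatYun:crystals}. If that is unavailable directly, we would fall back on the Kumjian--Pask uniqueness theorem applied to the presentation in \cite[Theorem~9.4]{MatYun:crystals}.
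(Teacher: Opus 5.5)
Your Step~2 rests on a false description of the degree-zero part. Correctly stated, the exchange relation puts every element into the normal form $\sum f_c^*f_d$. So the natural matrix-unit candidates in degree $0$ are $f_c^*f_d$ with $c,d\in\cB(\lambda)$, whose images are $\mathbf{1}_{Z(c,d)}$. These are not matrix units, and their span is not even a subalgebra, so there is no copy of $\mathrm{End}(V(\lambda))$ to work with. Already for $\SU(2)$, with $\cB(\varpi)=\{a,b\}$ as in the example, $Z(a,b)=\{(b^\infty a,0,b^\infty)\}$ is a single point and
\[
  \mathbf{1}_{Z(a,b)}\mathbf{1}_{Z(b,a)}=\mathbf{1}_{\{(b^\infty a,0,b^\infty a)\}}\neq\mathbf{1}_{Z(a,a)} .
\]
The left side lies outside $\Vect\{\mathbf{1}_{Z(c,d)} \mid c,d\in\cB(\varpi)\}$. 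In general $\mathbf{1}_{Z(c,d)}\mathbf{1}_{Z(d,e)}$ is the indicator of $\{(zc,0,ze) \mid zc,zd,ze\in\crystal(\infty)\}$. Genuine matrix units must therefore also record which tails $z$ are admissible, and this is exactly the vertex structure of the $k$-graph $\Lambda$ of \cite{MatYun:crystals}. Your elements $f_cf_c^*$ fare worse: in $\SU(2)$, $f_af_a^*\mapsto 1$ and $f_bf_b^*\mapsto\mathbf{1}_{Z(b,b)}$, so they are neither orthogonal nor sum to $1$. Hence knowing $Z(c,c)\neq\emptyset$ does not give injectivity on any finite-dimensional piece.

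What does work is your fallback, which is the paper's proof. By \cite[Theorem~9.4]{MatYun:crystals}, $\cO(K_0)\cong KP_\CC(\Lambda)$ as graded algebras, so by the graded uniqueness theorem \cite[Theorem~4.1]{Aranda:Kumjian-Pask} it suffices that the vertex projections have nonzero image. The input you are missing is \cite[Corollary~8.15]{MatYun:crystals}: each vertex projection is a sum of projections $p_c=f_c^*f_c$ with $c\in\cB(\mu)$ for a strictly dominant $\mu$, and $p_c\mapsto\mathbf{1}_{Z(c,c)}$. Nonemptiness of $Z(c,c)$ does not come from prepending highest-weight elements: in $\SU(2)$, $ab$ is not a Cartan word, so $a^\infty b\notin\crystal(\infty)$. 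One prepends the lowest-weight word instead, and $(b_{w_0}(\infty)c,0,b_{w_0}(\infty)c)\in Z(c,c)$ by \cite[Lemma~5.2]{MatYun:crystals}. The $C^*$-statement then follows from the universal property of $C^*(\Lambda)$ and the gauge-invariant uniqueness theorem \cite[Theorem~3.4]{KumPas}.

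Step~1 also needs correcting before it defines a homomorphism. The exchange relation of \cite[\S\S8--9]{MatYun:crystals} is $f_cf_d^*=\sum_{c'c=d'd\in\CCE(c,d)}f_{c'}^*f_{d'}$, which is exactly what Lemma~\ref{lem:relations}(d),(e) give. With the stars placed as you wrote them, the elements $\mathbf{1}_{Z(\emptyset,\cdot)}$ do not satisfy it: for $c=d=a$ in $\SU(2)$, the left side maps to $\mathbf{1}_{Z(a,a)}$ and the right side to $1+\mathbf{1}_{Z(b,b)}$. You also omit the relation $\sum_{c\in\cB(\mu)}f_c^*f_c=1$. It is independent of the others, since $f_c\mapsto 0$ for $c\neq\emptyset$ and $f_\emptyset\mapsto 1$ satisfies them, so it must be verified. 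It holds because the sets $Z(c,c)$, $c\in\cB(\mu)$, partition the unit space. Finally, reducing injectivity of an algebraic graded map to degree $0$ via $x^*x$ needs $x^*x=0\Rightarrow x=0$ in $\cO(K_0)$. This holds because $KP_\CC(\Lambda)$ embeds in $C^*(\Lambda)$, but it is not automatic for a $*$-algebra.
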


\begin{proof}
    By \cite[\S{}8-9]{MatYun:crystals}, $\cO(K_0)$ is the universal $*$-algebra with generators $f_a$, with $a$ varying over all finite crystals, and the following relations:
    \begin{enumerate}
        \item[(1)]
        for any $a\in\cB(\mu)$, $b\in\cB(\nu)$,
        $f_af_b = f_{ab}$ if $a\otimes b$ is in the Cartan component, and $f_af_b=0$ otherwise;
        \item[(2)]
        for any $a\in\cB(\mu)$, $b\in\cB(\nu)$,
        $f_af_b^* = \!\!\!\ds\sum_{a'a=b'b \in \CCE(a,b)} \!\!\!\!\!\! f_{a'}^*f_{b'}$;
        \item[(3)]
        for any $\mu\in\bP^+$,
        $\ds\sum_{a\in\cB(\mu)} f_a^*f_a = 1.$
    \end{enumerate}
    It is straightforward to check, using Lemma~\ref{lem:relations}, that the generators $\mathbf{1}_{Z(\emptyset,a)}$ of $\Cc^\infty(\GK)$ satisfy the same relations, so the formula~\eqref{eq:isomorphism} does define a $*$-morphism $\varphi_0 : \cO(K_0) \to \Cc^\infty(\GK)$.    Moreover, if $a\in\cB(\mu)$ then the generators $f_a\in\cO(K_0)$ and $\mathbf{1}_{Z(\emptyset,a)}\in\Cc^\infty(\GK)$ are of degree $\mu$ and $-\mu$ for their respective $\bP$-gradings, so $\varphi_0$ is grading-preserving, with the opposite $\bP$-grading action on $\cO(K_0)$.

    By \cite[Theorem~9.4]{MatYun:crystals}, the algebra $\cO(K_0)$ is graded isomorphic to the Kumjian-Pask algebra \cite{MatYun:crystals} of a row-finite higher-rank graph $\Lambda$ without sources, again with the opposite $\bP$-grading  on $\cO(K_0)$.  Therefore, once we show that the images under $\varphi_0$ of the vertex projections are all nonzero, it will follow from the graded uniqueness theorem \cite[Theorem~4.1]{Aranda:Kumjian-Pask} that the map~\eqref{eq:isomorphism} is an isomorphism.  By \cite[Corollary 8.15]{MatYun:crystals}, the vertex projections can be written as sums of projections of the form $p_a= f_a^*f_a$ with $a\in\cB(\mu)$ for any strictly dominant weight $\mu$.  But
    \[
      \varphi_0(p_a) = \varphi_0(f_a^*f_a) = \mathbf{1}_{Z(a,a)},
    \]
    and $Z(a,a)$ is non-empty because it contains $(b_{w_0}(\infty)a,0,b_{w_0}(\infty)a)$ where $w_0$ is the longest Weyl group element, see \cite[Lemma 5.2]{MatYun:crystals}.  Therefore, the images of the vertex projections are nonzero.

    This completes the proof for $\cO(K_0)$. By \cite[Theorem~9.4]{MatYun:crystals} again, the aforementioned graded isomorphism $\cO(K_0) \cong KP_\CC(\Lambda)$ extends to an isomorphism $C(K_0) \cong C^*(\Lambda)$. The universal property of $C^*(\Lambda)$ therefore ensures that $\varphi_0$ extends to a homomorphism $\varphi : C(K_0) \to C^*(\GK)$. This $\varphi$ intertwines the gauge actions because $\varphi_0$ is graded, and we already saw that each $\varphi(p_a) = \varphi_0(p_a) \not= 0$. So the Gauge-Invariant Uniqueness Theorem \cite[Theorem~3.4]{KumPas} shows that $\varphi$ is injective. It is surjective because its range contains the dense subalgebra $\Cc^\infty(\GK)$ of $C^*(\GK)$.
\end{proof}

\begin{example}
From the description of the crystal groupoid $\cG(SU(2))$ in Equation~\eqref{eq:G_SU2}, we get the short exact sequence
\[
  0 \longrightarrow M_\infty(\CC)\otimes\Cc(\ZZ) \longrightarrow \cO(\SU_0(2))
    \longrightarrow \Cc(\ZZ) \longrightarrow 0,
\]
where $M_\infty(\CC) = \underset{n\in\NN}{\underrightarrow{\lim}}\,M_n(\CC)$ and $\Cc(\ZZ) \cong \cO(\TT)$.
\end{example}

\section{Structure of the crystal groupoid}
\label{sec:structure}

In this section, we examine the topological and combinatorial structure of the semi-infinite crystal $\crystal(\infty)$ and the crystal groupoid $\GK$.

\subsection{Order structure of Cartan words}
\label{sec:Cartan_order}

For weights $\omega_1,\omega_2 \in \bP$, let us write $\omega_1\preceq\omega_2$ if $\omega_2-\omega_1 \in \bQ^+ = \NN\Simples$, and $\omega_1\prec\omega_2$ if in addition $\omega_1\neq\omega_2$.

\begin{lemma}
\label{lem:braiding_order}
  Let $a,a'\in\cB(\lambda)$, $b,b'\in\cB(\mu)$ with $\sigma(a\otimes b) = b'\otimes a'$ nonzero.  Then either (i) $a=a'$ and $b=b'$ or (ii) $\wt(a') \succ \wt(a)$ and $\wt(b') \prec \wt(b)$.
\end{lemma}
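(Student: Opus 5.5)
The plan is an induction along the crystal graph of the Cartan component. Write $c = \iota_{\lambda,\mu}^{-1}(a\otimes b) \in \cB(\lambda+\mu)$, so that $\iota_{\mu,\nu}$-style notation gives $\iota_{\mu,\lambda}(c) = b'\otimes a'$.

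\textbf{Reduction to one inequality.} Since $\iota_{\lambda,\mu}$ and $\iota_{\mu,\lambda}$ are weight-preserving,
\[
\wt(a)+\wt(b)=\wt(c)=\wt(b')+\wt(a').
\]
Hence $\wt(a')-\wt(a)=\wt(b)-\wt(b')$, and it suffices to prove the following two claims.
\begin{enumerate}
\item[(A)] $\wt(a')\succeq\wt(a)$.
\item[(B)] If $\wt(a')=\wt(a)$, then $a=a'$ (and therefore $b=b'$).
\end{enumerate}

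\textbf{Inductive setup.} Every $c\in\cB(\lambda+\mu)$ has the form $c=f_{i_k}\cdots f_{i_1}b_e(\lambda+\mu)$. At the highest weight element both embeddings give $b_e(\lambda)\otimes b_e(\mu)$ and $b_e(\mu)\otimes b_e(\lambda)$, so case (i) holds there. I would induct on $k$. Write $c=f_ic_0$ with $\iota_{\lambda,\mu}(c_0)=a_0\otimes b_0$ and $\iota_{\mu,\lambda}(c_0)=b_0'\otimes a_0'$. The $\iota$'s commute with the Kashiwara operators, and Kashiwara's tensor rule makes $f_i$ act on exactly one leg in each tensor product. With the paper's convention (consistent with $f(a\otimes a)=b\otimes a$ in the $SU(2)$ example), $f_i$ acts on the left leg of $x\otimes y$ iff $\varphi_i(x)>\varepsilon_i(y)$.

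\textbf{Case analysis.} This gives four cases according to which leg is hit in $\cB(\lambda)\otimes\cB(\mu)$ and in $\cB(\mu)\otimes\cB(\lambda)$.
\begin{itemize}
\item If $f_i$ hits $a_0$ and $b_0'$, then $\wt(a')-\wt(a)$ increases by $\alpha_i$, which is strictly good.
\item If $f_i$ hits $b_0$ and $a_0'$, then $\wt(a')-\wt(a)$ decreases by $\alpha_i$. This is the dangerous case.
\item In the two remaining cases the difference is unchanged. When the inductive configuration was case (i), the same leg is hit, since $a_0=a_0'$ and $b_0=b_0'$ swapped the roles symmetrically. I would need to check this carefully using the symmetric form of the tensor rule condition.
\end{itemize}

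\textbf{Main obstacle: the dangerous case.} Suppose $\varphi_i(a_0)\le\varepsilon_i(b_0)$ and $\varphi_i(b_0')>\varepsilon_i(a_0')$. Here I must show that the induction hypothesis is not violated: either this case cannot occur when $\wt(a_0')=\wt(a_0)$, or the accumulated difference $\wt(a_0')-\wt(a_0)$ already contains $\alpha_i$ with enough multiplicity. Mere weight bookkeeping will not suffice.

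I would first strengthen the inductive hypothesis to a statement about $i$-strings. Namely, along each $i$-string through $c$, the position of the $\cB(\lambda)$-factor in $\cB(\mu)\otimes\cB(\lambda)$ is at least as high as in $\cB(\lambda)\otimes\cB(\mu)$. I would then use the signature rule for entire $i$-strings rather than single steps. Concretely, restrict to the $\mathfrak{sl}_2$-subcrystal generated by $e_i$ and $f_i$ through $c$. There both Cartan components are explicit string tensor products, and the rank-one verification in $\cB(m)\otimes\cB(n)$ can be done by hand: the right leg is filled first in one order and last in the other.

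If the string approach resists, the fallback is a global argument. The Cartan braiding is the composite of the two Cartan embeddings, and I would compare them through a Demazure/Lakshmibai--Seshadri path model. There, $\iota_{\lambda,\mu}$ is concatenation of paths, and the claimed order reduces to the fact that in a concatenated LS path the initial direction dominates in Bruhat order.

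Claim (B) would come out of the same induction: equality of weights forces every step to be of a neutral type with both configurations in case (i).
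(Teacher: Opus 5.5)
Your route, a purely combinatorial induction along the crystal graph, differs from the paper's, but as it stands it has a genuine gap. The reduction to (A) and (B) and the four-way classification of steps are correct bookkeeping. They only restate the lemma, though: all of its content lies in the dangerous case, and for that case you give no argument. You also wrote the wrong inequality there. In $b_0'\otimes a_0'$ the operator $f_i$ acts on $a_0'$ iff $\varphi_i(b_0')\le\varepsilon_i(a_0')$; your condition $\varphi_i(b_0')>\varepsilon_i(a_0')$ describes a neutral step.

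For (A) to survive a dangerous $i$-step, the $\alpha_i$-coefficient of $\wt(a_0')-\wt(a_0)\in\bQ^+$ must already be positive. That coefficient is accumulated over the whole path from the highest weight element, across many different $i$-strings and interleaved with steps in other directions. Your proposed strengthening does not control it:
\begin{itemize}
\item Positions in $i$-strings (the values of $\varepsilon_i,\varphi_i$) carry, as far as weights go, only the pairings $\langle\alpha_i^\vee,\wt(\cdot)\rangle$. They do not see the $\alpha_i$-coefficient of a difference of weights.
\item Restricting to the $i$-string of $c$ does not give a rank-one Cartan-component problem. Under the two embeddings that string lies in $S_i(a)\otimes S_i(b)$ and in $S_i(b')\otimes S_i(a')$ respectively, where $S_i(x)$ is the $i$-string through $x$. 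The strings through $a$ and $a'$ differ in general, and the string of $c$ need not be the top component of either product. So there is no single $\cB(m)\otimes\cB(n)$ in which to do a hand computation.
\item The LS-path fallback is only a slogan. You would still have to identify the Cartan component among concatenations and compare the two orders, which is the whole problem.
\end{itemize}

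A small example shows what must be handled. Take $K=\SU(2)$, $\lambda=\varpi$, $\mu=2\varpi$, $\cB(\varpi)=\{u_1,u_{-1}\}$ and $\cB(2\varpi)=\{v_2,v_0,v_{-2}\}$, with subscripts giving the weight in units of $\varpi$. The two Cartan components are $u_1\otimes v_2,\ u_{-1}\otimes v_2,\ u_{-1}\otimes v_0,\ u_{-1}\otimes v_{-2}$ and $v_2\otimes u_1,\ v_0\otimes u_1,\ v_{-2}\otimes u_1,\ v_{-2}\otimes u_{-1}$, matched in this order by $\sigma$. The last pair is in case (i). Yet on the unique path to it the first step is strictly good, the second neutral and the third dangerous. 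So dangerous steps genuinely occur, even immediately before a case-(i) element, and must be compensated. This also refutes your argument for (B), that equal weights force every step to be neutral. Likewise ``$a=a'$, hence $b=b'$'' is unjustified, since it only gives $\wt(b)=\wt(b')$.

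The missing idea is the one the paper uses. By \cite[Theorem~5.3]{MatYun:crystals}, the Cartan braiding is the $q\to0$ limit of a renormalisation of the quantum braiding $V(\lambda)\otimes V(\mu)\to V(\mu)\otimes V(\lambda)$. That braiding is triangular: a power of $q$ times the flip, plus terms that raise the weight of the $V(\lambda)$-leg and lower that of the $V(\mu)$-leg. Hence $\sigma(a\otimes b)$ is either the leading term $b\otimes a$, which is case (i), or one of the lower terms $b''\otimes a''$ with $\wt(a'')\succ\wt(a)$ and $\wt(b'')\prec\wt(b)$, which is case (ii). This gives the dichotomy, including (B), with no induction.
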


\begin{proof}
  By Theorem~5.3 and Definition~5.4 of \cite{MatYun:crystals}, the Cartan braiding $\cB(\lambda)\otimes\cB(\mu) \to \cB(\mu)\otimes\cB(\lambda)$ is given by the $q=0$ limit of a suitable multiple of the braiding map $V(\lambda)\otimes V(\mu) \to V(\mu)\otimes V(\lambda)$.  The result therefore follows from the ordering property of the braiding map, see Equation (5.1) in \cite{MatYun:crystals}.
\end{proof}

\begin{corollary}
  \label{cor:BxB_decreasing}
  Let $a,b\in\cB(\lambda)$.  If $a\otimes b$ is in the Cartan component of $\cB(\lambda)\otimes\cB(\lambda)$, then either $a=b$ or $\wt(a) \prec \wt(b)$.  Moreover, $b\otimes b$ is in the Cartan component if and only if $b$ has extremal weight.
\end{corollary}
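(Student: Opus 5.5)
The first assertion is immediate from Lemma~\ref{lem:braiding_order} applied with $\mu=\lambda$. By~\eqref{eq:BxB_braiding}, the Cartan braiding on $\cB(\lambda)\otimes\cB(\lambda)$ acts as the identity on the Cartan component. So if $a\otimes b$ lies in the Cartan component, then $\sigma(a\otimes b)=a\otimes b$ is nonzero, and in the notation of Lemma~\ref{lem:braiding_order} we have $b'=a$ and $a'=b$. The lemma then gives two cases. In case (i), $a=a'=b$. In case (ii), $\wt(a')\succ\wt(a)$, that is, $\wt(b)\succ\wt(a)$. This is exactly the dichotomy claimed.

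For the second assertion, I will compare the weight structure of $\cB(2\lambda)$ with that of $\cB(\lambda)$. The Cartan component of $\cB(\lambda)\otimes\cB(\lambda)$ is a copy of $\cB(2\lambda)$, and weights add under the tensor product, so $\wt(b\otimes b)=2\wt(b)$.

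Suppose first that $b=b_w(\lambda)$ is extremal. The extremal weight $w(2\lambda)$ of $\cB(2\lambda)$ has a one-dimensional weight space, so there is exactly one element of $\cB(2\lambda)$ of that weight. Hence there is exactly one element of weight $2w\lambda$ in the Cartan component. On the other hand, $2w\lambda$ is an extremal weight of $V(\lambda)\otimes V(\lambda)$, and its weight space there is spanned by the single vector coming from $b\otimes b$. The only way to reach weight $2w\lambda$ as a sum of two weights of $V(\lambda)$ is $w\lambda+w\lambda$, since every weight of $V(\lambda)$ lies in the convex hull of $W\lambda$ and $w\lambda$ is a vertex. Therefore the unique Cartan element of weight $2w\lambda$ must be $b\otimes b$. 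A more crystal-theoretic alternative is available: $b_e(\lambda)\otimes b_e(\lambda)$ is the highest weight element of the Cartan component, and applying the strings of $f_i$ along a reduced word for $w$ via the tensor rule, maximal $f_i$-strings act diagonally on $b\otimes b$, so $b_w\otimes b_w$ stays in the Cartan component.

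Conversely, suppose $b\otimes b$ is in the Cartan component. Then $2\wt(b)$ is a weight of $V(2\lambda)$. I would like to conclude that $\wt(b)\in W\lambda$, but this needs more than the weight count alone, because nonextremal weights $\omega$ of $V(\lambda)$ do have $2\omega$ among the weights of $V(2\lambda)$.

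This converse is the main obstacle. The plan is to exploit the $i$-string structure. If $b$ is not extremal, there is some $i$ for which $b$ sits strictly inside its $i$-string, so that $\varepsilon_i(b)>0$ and $\varphi_i(b)>0$. Kashiwara's tensor rule then computes $e_i^{\max}$ on $b\otimes b$, and the resulting element is of the form $e_i^{k}b\otimes b$ with $\wt(e_i^kb)\succ\wt(b)$. Combined with the first assertion, this should contradict membership in the Cartan component: the Cartan component is a subcrystal, so it is stable under the $e_i$, yet the first part forces the left factor to have weight $\preceq$ that of the right factor. To make this precise I would cite the standard fact that $b\otimes b'$ lies in the Cartan component iff repeatedly applying $e_i$'s reaches $b_e\otimes b_e$. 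I would then track, for a suitable $i$, the first step where the tensor rule acts on the left leg of $b\otimes b$, producing a pair $c\otimes d$ with $\wt(c)\succ\wt(d)$ that still lies in the Cartan component. That contradicts the first assertion.
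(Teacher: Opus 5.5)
\textbf{There is a genuine gap in your proof of the converse.} Your first assertion and the forward direction of the second are fine. They match the paper, which also argues that $b_w(\lambda)\otimes b_w(\lambda)$ is the element of extremal weight $2w\lambda$ in $\cB(\lambda)\otimes\cB(\lambda)$.

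Your plan for the converse rests on the claim that a non-extremal $b\in\cB(\lambda)$ satisfies $\varepsilon_i(b)>0$ and $\varphi_i(b)>0$ for some $i$. This claim is false. In type $A_2$, take $b=f_2f_1\,b_e(2\varpi_1)\in\cB(2\varpi_1)$, the one-row tableau with entries $1,3$.
\begin{itemize}
\item Its weight is $\varpi_1-\varpi_2\notin W(2\varpi_1)=\{2\varpi_1,\,2\varpi_2-2\varpi_1,\,-2\varpi_2\}$, so $b$ is not extremal.
\item Yet $\varepsilon_1(b)=0$ and $\varphi_2(b)=0$, so $b$ sits at an end of both of its strings.
\end{itemize}
For such $b$ your one-step contradiction is unavailable. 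Your closing sentence about ``the first step where the tensor rule acts on the left leg'' does not show that such a step exists.

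A minor point: with the tensor rule used in the paper, when $\varepsilon_i(b)>\varphi_i(b)>0$ one gets $e_i^{\max}(b\otimes b)=e_i^{\varepsilon_i(b)}b\otimes e_i^{\varepsilon_i(b)-\varphi_i(b)}b$, not $e_i^kb\otimes b$. Your conclusion survives there, because the left factor still has strictly larger weight.

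\textbf{The missing idea is an induction on the height of $\lambda-\wt(b)$.} Suppose $b\otimes b$ is in the Cartan component and $b\neq b_e(\lambda)$, and choose $i$ with $\varepsilon_i(b)>0$.
\begin{itemize}
\item If also $\varphi_i(b)>0$, your argument gives a contradiction.
\item Otherwise $b$ is at the bottom of its $i$-string. The tensor rule then gives $e_i^{\max}(b\otimes b)=c\otimes c$ with $c=e_i^{\max}b$. This element still lies in the Cartan component, since that component is a subcrystal, and $\wt(c)\succ\wt(b)$.
\item By induction $c=b_v(\lambda)$ is extremal. Then $b=f_i^{\max}c$ is the other end of an extremal $i$-string, so $b=b_{s_iv}(\lambda)$ by multiplicity one of the weight $s_iv\lambda$.
\end{itemize}
In the example above, the single step $e_2^{\max}$ produces $f_1b_e(2\varpi_1)\otimes f_1b_e(2\varpi_1)$, and $f_1b_e(2\varpi_1)$ is strictly inside its $1$-string, so your argument then applies.

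With this repair you would have a self-contained crystal-theoretic proof. The paper does not argue this way: it observes that $b\otimes b$ is Cartan if and only if $\sigma(b\otimes b)=b\otimes b$, and then quotes \cite[Lemma~10.1]{MatYun:crystals}.
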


\begin{proof}
The first statement follows by applying the previous lemma to Equation~\eqref{eq:BxB_braiding}.
For the second statement, suppose first that $b\in\cB(\lambda)$ has extremal weight.  Then $b\otimes b$ has extremal weight in $\cB(\lambda)\otimes\cB(\lambda)$, so belongs to the Cartan component.  Conversely, Equation~\eqref{eq:BxB_braiding} shows that $b\otimes b$ is in the Cartan component if and only if $\sigma(b\otimes b) = b\otimes b$, so the result follows from \cite[Lemma~10.1]{MatYun:crystals}.
\end{proof}

It follows from Corollary \ref{cor:BxB_decreasing} that if $\lambda\in\bP^+$ is strictly dominant, then any semi-infinite Cartan word
\[
  \ldots x_3x_2x_1x_0 \in \prod_{j=0}^{\infty} \cB(\lambda)
\]
must eventually stabilise to an extremal element---that is, there is $w\in W$ such that $x_j = b_w(\lambda)$ for all $j$ sufficiently large, where $b_w(\lambda)$ is the extremal element from Equation \eqref{eq:extremal_weight_vector}.  

We can obtain more precise information about the structure of semi-infinite Cartan words by using the theory of Demazure crystals \cite{Littelmann,Kas:Demazure,AssDraGon}.  Let $w\in W$ be a Weyl group element, and let $w=s_{i_1}\ldots s_{i_k}$ be a reduced expression for $w$, where $s_i$ denotes the simple reflection in $\alpha_i$.  The Demazure crystal $\cB_{w}(\lambda)\subseteq\cB(\lambda)$ is the set
  \begin{equation}
  \label{eq:Demazure_crystal}
     \cB_w(\lambda) = \{ f_{i_1}^{m_1}f_{i_2}^{m_2}\ldots f_{i_k}^{m_k} (b_e(\lambda))  \in \cB(\lambda) \mid
     m_1,\ldots,m_k\in\NN\} .
  \end{equation}
As the notation suggests, this set is not dependent on the choice of reduced expression for $w$.  The utility of Demazure crystals for our purposes comes from the following result, due to Assaf, Dranowski and Gonz\'{a}lez.

\begin{proposition}[{\cite[Lemma 9.1]{AssDraGon}}]
  \label{prop:Demazure}
  Let $\lambda,\mu\in\bP^+$ and fix $w\in W$.  Write $\lambda=\sum_i\lambda_i\varpi_i$ and $\mu=\sum_i\mu_i\varpi_i$ with $\lambda_i,\mu_i\in\NN$, and suppose that  $\mu_i=0$ whenever $\lambda_i=0$.  If $b_w(\lambda)\otimes a$ is in the Cartan component of $\cB(\lambda)\otimes\cB(\mu)$, then $a\in\cB_w(\mu)$. 
\end{proposition}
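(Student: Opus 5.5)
Before planning, note that this statement is cited from \cite[Lemma~9.1]{AssDraGon}, and nothing stated earlier in the paper is strong enough to imply it. So the plan below is my own route to it. It rests on standard Demazure-crystal facts that are not in the paper and that I have not verified in the form needed, and the central step (the string-property argument) is the step I am least sure of.

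The plan is an induction on the length of $w$, driven by the characterisation of the Cartan component through $e_i$-strings in the tensor rule. I would use the conventions of \cite{HonKan}; in the paper's ordering $b\otimes c$, the right leg is the "first" factor, and I would check at the outset that these conventions agree.

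\textbf{Base case and stability.} For $w=e$, the tensor $b_e(\lambda)\otimes a$ lies in the Cartan component only if it is the highest-weight element of that component. Indeed, $b_e(\lambda)\otimes a$ is killed by every $e_i$ precisely when $a$ is highest weight "relative to $\lambda$", and within the Cartan component the only such element is $b_e(\lambda+\mu)=b_e(\lambda)\otimes b_e(\mu)$. Hence $a=b_e(\mu)\in\cB_e(\mu)$.

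The key structural fact I would then use is the following. The Cartan component $\cB(\lambda+\mu)\subset\cB(\lambda)\otimes\cB(\mu)$ has the property that $\cB_w(\lambda)\otimes\cB(\mu)\cap\cB(\lambda+\mu)$ is contained in $\cB_w(\lambda+\mu)$, or at least that it behaves compatibly with the Demazure operators $\mathcal{D}_i(S)=\bigcup_k f_i^k S$. I would aim to prove the statement in the sharper form
\[
\{a : b_w(\lambda)\otimes a \in \cB(\lambda+\mu)\}\subseteq \cB_w(\mu),
\]
by induction along a reduced word $w=s_iw'$ with $\ell(w)=\ell(w')+1$.

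\textbf{Inductive step.} Write $b_w(\lambda)=f_i^{m}b_{w'}(\lambda)$, where $m=\langle w'\lambda,\alpha_i^\vee\rangle\ge0$ is maximal, so that $e_i$-strings through $b_w(\lambda)$ end at $b_{w'}(\lambda)$. Given $b_w(\lambda)\otimes a$ in the Cartan component, apply $e_i^{\max}$ to it. By the tensor rule, this raises both legs. Because $b_w(\lambda)$ is $i$-lowest in its string, the signature rule should force the result to be $b_{w'}(\lambda)\otimes e_i^{k}a$ for some $k$, possibly after also moving the $\lambda$-leg. Since the Cartan component is a subcrystal, this element again lies in the Cartan component, and induction gives $e_i^{k}a\in\cB_{w'}(\mu)$. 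Then $a\in f_i^{k}\cB_{w'}(\mu)\subseteq\cB_w(\mu)$, using the string property of Demazure crystals: $\cB_w=\bigcup_k f_i^k\cB_{w'}$.

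\textbf{Main obstacle.} The hypothesis that $\mu_i=0$ whenever $\lambda_i=0$ has to enter at exactly this point. It is what ensures that, when $m=0$ (so that $s_iw'\lambda=w'\lambda$ because the weight is stabilised), the $\lambda$-leg still fully absorbs the $\varphi_i$ of the $\mu$-leg. Without it the statement fails; for example, $\lambda=0$ gives no constraint on $a$ at all. Concretely, the difficulty is controlling the signature rule so that $e_i^{\max}$ lands with the left leg exactly at $b_{w'}(\lambda)$ rather than somewhere else. For that case analysis I would first try to compare $\varepsilon_i(a)$ with $\varphi_i(b_w(\lambda))=0$ and with $\varepsilon_i(b_w(\lambda))=m$, and use that $\operatorname{supp}\mu\subseteq\operatorname{supp}\lambda$ lets me choose reduced words adapted to the stabiliser of $\lambda$. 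If this direct approach stalls, the fallback is to quote \cite[Lemma~9.1]{AssDraGon} itself.
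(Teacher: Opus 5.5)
\textbf{The gap is in your base case, not the inductive step.} You claim that $b_e(\lambda)\otimes a\in\cB(\lambda+\mu)$ forces this tensor to be the highest-weight element. Your justification only identifies which tensors $b_e(\lambda)\otimes a$ are killed by all $e_i$. It never shows that such a tensor lying in the Cartan component must be killed by all $e_i$.

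Nor can it be shown without the support hypothesis, and your base case never invokes it. If $\lambda_i=0<\mu_i$, then $\varphi_i(b_e(\lambda))=0=\varepsilon_i(b_e(\mu))$, so Kashiwara's rule gives $f_i(b_e(\lambda)\otimes b_e(\mu))=b_e(\lambda)\otimes f_ib_e(\mu)$. This lies in the Cartan component, yet $f_ib_e(\mu)\notin\cB_e(\mu)$. So the hypothesis $\supp\mu\subseteq\supp\lambda$ is needed precisely at $w=e$.

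A repair:
\begin{enumerate}
\item Among all $a\neq b_e(\mu)$ with $b_e(\lambda)\otimes a$ in the Cartan component, choose one with $\wt(a)$ maximal for $\preceq$.
\item This element is not highest weight, so some $e_i$ acts nontrivially on it. Since $e_ib_e(\lambda)=0$, the tensor rule forces $\varepsilon_i(a)>\lambda_i$ and $e_i(b_e(\lambda)\otimes a)=b_e(\lambda)\otimes e_ia$, which is again in the Cartan component.
\item Maximality then gives $e_ia=b_e(\mu)$, so $a=f_ib_e(\mu)$. Hence $\mu_i\geq 1$ and $1=\varepsilon_i(a)>\lambda_i$, i.e.\ $\lambda_i=0$, contradicting the hypothesis.
\end{enumerate}

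\textbf{Your ``main obstacle'' is not an obstacle, and the hypothesis plays no role there.} Since $\ell(s_iw')>\ell(w')$, the element $b_w(\lambda)$ sits at the bottom of its $i$-string: $\varphi_i(b_w(\lambda))=0$, $\varepsilon_i(b_w(\lambda))=m$, and $e_i^mb_w(\lambda)=b_{w'}(\lambda)$. The paper uses Kashiwara's convention, as the $\SU(2)$ example confirms. In it, $e_i(b_1\otimes b_2)$ acts on $b_2$ exactly when $\varphi_i(b_1)<\varepsilon_i(b_2)$, and on $b_1$ otherwise. With $\varphi_i(b_1)=0$, $e_i$ first lowers $a$ all the way to $e_i^{\max}a$, and only then raises the left leg up to $b_{w'}(\lambda)$. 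Hence
\[
e_i^{\max}(b_w(\lambda)\otimes a)=b_{w'}(\lambda)\otimes e_i^{\max}a
\]
for every $m\geq 0$, including $m=0$.

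Your induction then closes exactly as you describe, via $\cB_{s_iw'}(\mu)=\bigcup_k f_i^k\cB_{w'}(\mu)$. The unproved ``key structural fact'' about $\cB_w(\lambda+\mu)$ is never needed. With the base case repaired, you have a self-contained proof of a result that the paper only quotes from \cite[Lemma~9.1]{AssDraGon}.
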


Combining this with Proposition 4.5 from \cite{Matassa:some_results}, we can characterise which tensor products of extremal elements belong to the Cartan component.  This was independently observed by M.~Matassa and T.~Schulz (personal communication).
To state the result, we need some notation.  

Let $\lambda\in\bP^+$ and write
\begin{equation}
  \lambda=\sum_{i\in\Simples} \lambda_i \varpi_i
\end{equation}
as above.  We define the \emph{support} of $\lambda$ to be
\begin{equation}
\label{eq:support}
  I = \{ i\in\Simples \mid \lambda_i\neq 0\}.
\end{equation}
The stabiliser subgroup of $\lambda$ under the Weyl group action depends only on its support, and we denote it by $W_I$.  
We warn the reader that with this convention, $W_I$ is generated by the reflections in the simple roots \emph{not} in $I$, so that it corresponds to $W_{S}$ with $S=I^c$ from elsewhere in the literature.
For $w\in W$, we write $[w]_I$ for the coset $wW_I \in W/W_I$.   Note that the extremal weight elements of $\cB(\lambda)$ satisfy
\[
  b_w(\lambda) = b_{w'}(\lambda) \quad \Leftrightarrow \quad [w]_I = [w']_I.
\]

The $W_I$-cosets are ordered by the Bruhat order, see \emph{e.g.}\ \cite[Ch.2]{BjoBre}.  The following fact is well-known, see \emph{e.g.}\ the proof of \cite[Proposition 3.4]{AssDraGon}.  We include a proof for completeness.

\begin{proposition}
  \label{prop:extremal_in_Demazure}
  Let $\lambda\in\bP^+$ with support $I$, and let $v,w\in W$.  Then $b_v(\lambda) \in \cB_w(\lambda)$ if and only if $[v]_I\leq[w]_I$ for the Bruhat order.
\end{proposition}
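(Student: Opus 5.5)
We prove both directions by induction on the length of a reduced expression for $w$, working with minimal-length coset representatives. Recall the standard facts about extremal elements: for $v\in W$ and a simple reflection $s_i$, the string through $b_v(\lambda)$ in direction $i$ has endpoints $b_v(\lambda)$ and $b_{s_iv}(\lambda)$, and $f_i^{\max} b_v(\lambda) = b_{s_iv}(\lambda)$ precisely when $\ip{v\lambda,\alpha_i^\vee}\ge 0$, which holds iff $s_iv\ge v$ in the Bruhat order (modulo $W_I$). Moreover, $f_i^m b_v(\lambda)$ for $0<m<\ip{v\lambda,\alpha_i^\vee}$ is not extremal, since its weight is not in $W\lambda$ (it lies strictly inside the segment between $v\lambda$ and $s_iv\lambda$, whose length is minimal among weights in $W\lambda$). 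We use the recursive description $\cB_w(\lambda)=\bigcup_{m\ge0} f_i^m\cB_{s_iw}(\lambda)$ when $w=s_iw'$ with $\ell(w)=\ell(w')+1$, which follows from \eqref{eq:Demazure_crystal}.

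For the ``if'' direction, suppose $[v]_I\le[w]_I$. Choose $v$ to be the minimal representative of its coset; then by the subword property $v$ is a subword $s_{i_{j_1}}\cdots s_{i_{j_p}}$ of the reduced word $s_{i_1}\cdots s_{i_k}$ of $w$, and we may take this subword to be reduced. Applying $f_{i_{j_p}}^{\max}$, then $f_{i_{j_{p-1}}}^{\max}$, and so on to $b_e(\lambda)$, with exponent $m=0$ at the positions not used, gives the extremal elements successively. Each step goes upward in Bruhat order because the subword is reduced, so we end at $b_v(\lambda)$. Hence $b_v(\lambda)\in\cB_w(\lambda)$.

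For the ``only if'' direction, we induct on $\ell(w)$; the case $w=e$ is trivial, since $\cB_e(\lambda)=\{b_e(\lambda)\}$. Write $w=s_iw'$ reduced and suppose $b_v(\lambda)=f_i^m c$ with $c\in\cB_{w'}(\lambda)$. The key step, and the main obstacle, is to show that $c$ may be taken extremal. Since $\cB_{w'}(\lambda)$ is a union of pieces of $i$-strings closed under $e_i$ (a standard property of Demazure crystals, which follows inductively from the recursion), we may replace $c$ by the top $e_i^{\max}c$ of its $i$-string, which also lies in $\cB_{w'}(\lambda)$. Now $b_v(\lambda)$ is extremal, and an extremal element lies at one end of its $i$-string. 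Its string top is therefore either $b_v(\lambda)$ itself or $b_{s_iv}(\lambda)$, both extremal. Thus either $b_v(\lambda)$ or $b_{s_iv}(\lambda)$ lies in $\cB_{w'}(\lambda)$, and by induction $[v]_I\le[w']_I$ or $[s_iv]_I\le[w']_I$. In the first case $[v]_I\le[w']_I\le[w]_I$. In the second case, the lifting property of Bruhat order (for the parabolic quotient, see \cite[Ch.~2]{BjoBre}) gives $[v]_I\le[s_iw']_I=[w]_I$, using $s_iw'>w'$.

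The delicate point is justifying that Demazure crystals are $e_i$-closed, that is, unions of string tops. If we cannot cite this from \cite{Kas:Demazure}, we prove it by induction via the string property. Without it, the argument would need a direct weight-based comparison instead.
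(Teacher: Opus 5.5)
Your proof is correct and is essentially the paper's argument. For ``if'' you use the subword property, which the paper phrases as $\cB_v(\lambda)\subseteq\cB_w(\lambda)$. For ``only if'' you write $b_v(\lambda)=f_i^m c$ with $c\in\cB_{s_iw}(\lambda)$, pass to the top of the $i$-string by $e_i$-stability to reach $b_v(\lambda)$ or $b_{s_iv}(\lambda)$, and finish with the lifting property, which the paper does by hand with minimal coset representatives.

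One caveat: $e_i$-stability of $\cB_{w'}(\lambda)$ does not follow formally from the recursion, because $s_i$ is not a left descent of $w'$. It should be cited from \cite{Kas:Demazure}, which is what the paper implicitly does.
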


\begin{proof}
  If $[v]_I\leq[w]_I$ then it follows from the definitions that $\cB_v(\lambda) \subseteq \cB_w(\lambda)$ and hence $b_v(\lambda)\in\cB_w(\lambda)$.

  For the converse, we work by induction on the length of a minimal coset representative of $[w]_I$.  Indeed, without loss of generality, we may suppose that $w$ is the minimal length element in its coset.
  If $\ell(w)=0$ then $\cB_w(\lambda)=\{b_e(\lambda)\}$ so the result is trivial.  Now suppose $\ell(w)>0$ and assume we have proven the result for all $w'\in W$ with coset representative of length less than $\ell(w)$.  Let $s_i$ be a simple reflection such that $\ell(s_iw) = \ell(w) - 1$, meaning that $s_i$ is the first simple reflection in a reduced expression for $w$.  Since $b_v(\lambda)\in\cB_{w}(\lambda)$ we have
  \begin{equation}
    \label{eq:Demazure_induction}
    b_v(\lambda) = f_i^m b \qquad \text{for some }b\in\cB_{s_iw}(\lambda).
  \end{equation}
  Since $b_v(\lambda)$ is extremal, it is either at the top or the bottom of its $\alpha_i$-string.  If it is at the top then $m=0$ in Equation~\eqref{eq:Demazure_induction}, which means $b_v(\lambda)\in\cB_{s_iw}(\lambda)$ and by induction we get $[v]_I\leq [s_iw]_I < [w]_I$.  If it is at the bottom, then $b_{s_iv}(\lambda) = e_i^M (b_{v}(\lambda))$, where $M\in\NN$ is the largest integer such that $e_i^M(b_v(\lambda))\neq0$.  From this and~\eqref{eq:Demazure_induction} we get
  \[
    b_{s_iv}(\lambda) = e_i^{M-m} b \in \cB_{s_iw}(\lambda),
  \]
  since Demazure crystals are stable under $e_i$.  By induction, we get $[s_iv]_I\leq [s_iw]_I$, \emph{i.e.} $s_iv' \leq s_iw$, where $s_iv'$ is the minimal length coset representative of $[s_iv]_I$.  
  Since $\ell(w) = \ell(s_iw)+1$, this implies $v' \leq w$. Since $[v']_I = [v]_I$, this completes the proof.
\end{proof}

We can now return to Cartan words.

\begin{proposition}
  \label{prop:extremal_products}
  Let $\lambda\in\bP^+$ with support $I$, and let $v,w\in W$.
  Then $b_w(\lambda)\otimes b_v(\lambda)$ is in the Cartan component of $\cB(\lambda)\otimes\cB(\lambda)$ if and only if $[v]_I \leq [w]_I$ for the Bruhat order.
\end{proposition}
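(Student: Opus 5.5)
The proof splits into two directions. The forward direction is a direct combination of Proposition~\ref{prop:Demazure} and Proposition~\ref{prop:extremal_in_Demazure}. The reverse direction requires showing that $b_w(\lambda)\otimes b_v(\lambda)$ actually lies in the Cartan component; for this we plan to invoke \cite[Proposition~4.5]{Matassa:some_results}, as the paper's preamble to this proposition indicates.

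\emph{Forward direction.} Suppose $b_w(\lambda)\otimes b_v(\lambda)$ is in the Cartan component of $\cB(\lambda)\otimes\cB(\lambda)$. Apply Proposition~\ref{prop:Demazure} with $\mu=\lambda$. Its support hypothesis ($\mu_i=0$ whenever $\lambda_i=0$) holds trivially. We conclude that $b_v(\lambda)\in\cB_w(\lambda)$. Proposition~\ref{prop:extremal_in_Demazure} then gives $[v]_I\le[w]_I$.

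\emph{Reverse direction.} Assume $[v]_I\le[w]_I$. We may replace $v,w$ by minimal-length coset representatives, so that $v\le w$ in the Bruhat order. The key input is the Demazure-type description of the Cartan component: for extremal $b_w(\lambda)$, the elements $a$ with $b_w(\lambda)\otimes a$ in the Cartan component are exactly those of $\cB_w(\lambda)$. This is, as we understand it, the content of \cite[Proposition~4.5]{Matassa:some_results}. Combined with Proposition~\ref{prop:extremal_in_Demazure}, it shows that $b_v(\lambda)\in\cB_w(\lambda)$ and hence that $b_w(\lambda)\otimes b_v(\lambda)$ is Cartan.

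If that reference only gives something weaker, we would argue directly by induction on $\ell(w)$, as follows.
\begin{itemize}
\item[(i)] The base case $w=e$ holds because $b_e\otimes b_e$ is the highest weight element.
\item[(ii)] For the inductive step, choose $s_i$ with $\ell(s_iw)<\ell(w)$. Then $b_w(\lambda)=f_i^{\max}b_{s_iw}(\lambda)$.
\item[(iii)] By Kashiwara's tensor rule, $f_i^{N}(b_{s_iw}\otimes c)$, with $N$ the full $i$-string length, pushes all $f_i$'s onto the left factor exactly when $c$ sits suitably in its $i$-string. Since the Cartan component is stable under the Kashiwara operators, we can transport Cartan-ness from $b_{s_iw}\otimes c$ to $b_w\otimes f_i^{m}c$.
\item[(iv)] Choose $c=b_{v}$ or $b_{s_iv}$, according to whether $s_iv>v$, using the standard Bruhat lifting property: $v\le w$ implies $\min(v,s_iv)\le s_iw$.
\item[(v)] Check via the signature rule that the $f_i$'s distribute so as to produce exactly $b_w\otimes b_v$, using that extremal elements sit at the ends of their $i$-strings.
\end{itemize}

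The main obstacle is the reverse direction, specifically step~(v): verifying with the signature rule that applying $f_i$'s to $b_{s_iw}(\lambda)\otimes b_{v'}(\lambda)$ yields precisely $b_w(\lambda)\otimes b_v(\lambda)$ in each case of the lifting property. The forward direction is immediate.
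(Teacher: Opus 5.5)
Your proposal is correct and is essentially the paper's proof: the forward direction is exactly Proposition~\ref{prop:Demazure} (with $\mu=\lambda$) followed by Proposition~\ref{prop:extremal_in_Demazure}, and the reverse direction is the paper's passage to minimal coset representatives $v^I\le w^I$ followed by \cite[Proposition~4.5]{Matassa:some_results}; the paper invokes that reference only for the extremal statement itself, so you do not need the stronger Demazure characterisation you attribute to it. Your fallback induction also closes without difficulty: with $v'=\min(v,s_iv)$, both $b_{s_iw}(\lambda)$ and $b_{v'}(\lambda)$ have $\varepsilon_i=0$, so Kashiwara's tensor rule (the paper's convention, as the $\SU(2)$ example shows) gives $f_i^{N}(b_{s_iw}(\lambda)\otimes b_{v'}(\lambda))=b_w(\lambda)\otimes b_{v'}(\lambda)$ and $f_i^{N+M}(b_{s_iw}(\lambda)\otimes b_{v'}(\lambda))=b_w(\lambda)\otimes b_{s_iv'}(\lambda)$, where $N=\varphi_i(b_{s_iw}(\lambda))$ and $M=\varphi_i(b_{v'}(\lambda))$, so step (v) is not a real obstacle.
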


\begin{proof}
  Let us write $w^I$ for the minimal-length element of $[w]_I \in W/W_I$.  Then $[v]_I \leq [w]_I$ if and only if $v^I\leq w^I$.  In this case, \cite[Proposition~4.5]{Matassa:some_results} implies that $b_w(\lambda)\otimes b_v(\lambda) = b_{w^I}(\lambda) \otimes b_{v^I}(\lambda)$ is in the Cartan component.
  Conversely, if $b_w(\lambda)\otimes b_v(\lambda)$ is in the Cartan component, then Proposition \ref{prop:Demazure} and Proposition \ref{prop:extremal_in_Demazure} imply that $[v]_I\leq[w]_I$.
\end{proof}

\subsection{Orbits and isotropy groups}
\label{sec:orbits}

Two semi-infinite crystals $x,y \in \crystal(\infty)$ are \emph{tail equivalent} if there exist $\mu,\nu \in \bP^+$ such that $\sfS_\mu(x) = \sfS_\nu(y)$.
The orbits of the crystal groupoid are precisely the tail equivalence classes of $\crystal(\infty)$.  As observed in the previous section, the tail equivalence classes are indexed by the Weyl group.  Let us be more specific.

\begin{lemma}
\label{lem:extremal_words}
Let $\lambda\in\bP^{+}$ be a strictly dominant weight and let $x\in\cB(\lambda)$.  The semi-infinite word
\[
  x^\infty = \ldots xxx \ \in \cB(\lambda)^\infty
\]
is a Cartan word if and only if $x$ has extremal weight, \emph{i.e.}, $x=b_w(\lambda)$ for some $w\in W$.  Moreover, for any additively cofinal sequence $(\mu_j)_{j\in\NN}$ of dominant weights, we have
\begin{equation}
  \label{eq:b_w_infty}
   b_w(\lambda)^\infty = \ldots b_w(\mu_3) b_w({\mu_2}) b_w({\mu_1}) b_w({\mu_0})
\end{equation}
\end{lemma}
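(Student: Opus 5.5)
The first statement follows from results already in hand. By the discussion after Proposition~\ref{prop:Cartan_test} (equivalently Corollary~\ref{cor:subshift}), a semi-infinite word in $\prod_{j}\cB(\lambda)$ with all letters in the same crystal is Cartan if and only if each adjacent pair $x_{j+1}\otimes x_j$ lies in the Cartan component. For the constant word $x^\infty$ this reduces to the single condition that $x\otimes x$ lies in the Cartan component of $\cB(\lambda)\otimes\cB(\lambda)$. By Corollary~\ref{cor:BxB_decreasing}, this holds exactly when $x$ is extremal, that is, $x=b_w(\lambda)$ for some $w\in W$. Strict dominance is not needed for this equivalence. It is used only to make the constant sequence $(\lambda)$ additively cofinal, so that Proposition~\ref{prop:words} identifies $b_w(\lambda)^\infty$ with an element of $\crystal(\infty)$.

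For the identity \eqref{eq:b_w_infty}, I would compare the coordinates $(b_\nu)_{\nu\in\bP^+}$ of both sides. The key claim is:
\begin{equation*}
\text{for all } \nu\leq\nu' \text{ in } \bP^+,\qquad \sfR_\nu(b_w(\nu'))=b_w(\nu) \text{ and } \sfS_\nu(b_w(\nu'))=b_w(\nu'-\nu).
\end{equation*}
Equivalently, the Cartan inclusion $\cB(\nu')\into\cB(\nu'-\nu)\otimes\cB(\nu)$ sends $b_w(\nu')$ to $b_w(\nu'-\nu)\otimes b_w(\nu)$. To prove this, note that $b_w(\nu'-\nu)\otimes b_w(\nu)$ has weight $w\nu'$, which is an extremal weight of $\cB(\nu'-\nu)\otimes\cB(\nu)$ with one-dimensional weight space. (The corresponding weight space of $V(\nu'-\nu)\otimes V(\nu)$ is one-dimensional, since $w\nu'$ lies in the $W$-orbit of the highest weight $\nu'$.) So this is the only crystal element of that weight. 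On the other hand, $\iota(b_w(\nu'))$ has the same weight. Hence they coincide, and this element automatically lies in the Cartan component.

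With the claim in hand, I would iterate it to show that the finite Cartan word decomposition of $b_w(\lambda_n)$, where $\lambda_n=\sum_{j\le n}\mu_j$, is $b_w(\mu_n)\cdots b_w(\mu_0)$. This can be read off from \eqref{eq:letter} together with the claim. Then for each $\nu\in\bP^+$, Proposition~\ref{prop:words} gives the $\nu$-coordinate of the right-hand side as $\sfR_\nu(b_w(\lambda_n))=b_w(\nu)$ for $n$ large. Applying the same argument to the constant sequence $(\lambda)$ gives the $\nu$-coordinate of $b_w(\lambda)^\infty$ as $\sfR_\nu(b_w(k\lambda))=b_w(\nu)$ as well. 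So both words correspond to the same point $(b_w(\nu))_\nu\in\crystal(\infty)$.

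The only delicate step is the weight-multiplicity argument in the claim, namely that $w\nu'$ has multiplicity one in the tensor product crystal. I would justify it by noting that all weights of $\cB(\nu'-\nu)\otimes\cB(\nu)$ lie in the convex hull of $W\nu'$. A sum $\omega_1+\omega_2$, with $\omega_1$ a weight of $\cB(\nu'-\nu)$ and $\omega_2$ a weight of $\cB(\nu)$, can equal the vertex $w\nu'$ only if $\omega_1=w(\nu'-\nu)$ and $\omega_2=w\nu$. This follows after applying $w^{-1}$ and using that $\nu'$ is the unique maximal weight.
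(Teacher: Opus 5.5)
Your proposal is correct and takes essentially the same route as the paper. The first statement follows from Corollary~\ref{cor:subshift} and Corollary~\ref{cor:BxB_decreasing}, and \eqref{eq:b_w_infty} holds because every finite truncation $\sfR_\nu$ of either side is the extremal element $b_w(\nu)$. Your weight-multiplicity argument, showing that the Cartan inclusion sends $b_w(\nu')$ to $b_w(\nu'-\nu)\otimes b_w(\nu)$, just spells out the step the paper states in one line.
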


\begin{proof}
  The first statement follows from Corollary~\ref{cor:subshift} and Corollary~\ref{cor:BxB_decreasing}.  The semi-infinite words on both sides of~\eqref{eq:b_w_infty} have the property that all finite truncations are extremal elements, so they are equal.
\end{proof}

\begin{definition}
  \label{eq:b_w}
We will denote the semi-infinite crystal element~\eqref{eq:b_w_infty} by
\begin{equation*}
  b_w(\infty) \in \crystal(\infty),
\end{equation*}
and refer to it as the \emph{extremal semi-infinite crystal} associated to $w\in W$.
\end{definition}

Corollary \ref{cor:BxB_decreasing} implies the following.

\begin{theorem}
\label{thm:orbits}
The set
\(
  \{b_w(\infty) \mid w\in W\},
\)
is a complete set of orbit representatives of $\GK$.
\end{theorem}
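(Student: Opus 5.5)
Fix a strictly dominant weight $\lambda$ (e.g. $\lambda=\sum_i\varpi_i$) and use Corollary~\ref{cor:subshift} to identify $\crystal(\infty)$ with semi-infinite words $\ldots x_2x_1x_0$ in $\cB(\lambda)$ whose consecutive pairs $x_{j+1}\otimes x_j$ lie in the Cartan component. In this picture, $\sfS_{k\lambda}$ deletes the last $k$ letters: by \eqref{eq:letter} the letters of $\sfS_{k\lambda}(x)$ are $x_{j+k}$. The proof has two parts. First, every $x$ is tail equivalent to some $b_w(\infty)$. Second, $b_v(\infty)$ and $b_w(\infty)$ are tail equivalent only when $v=w$.

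\emph{Existence.} Corollary~\ref{cor:BxB_decreasing} says that for each $j$ either $x_{j+1}=x_j$ or $\wt(x_{j+1})\prec\wt(x_j)$. Since $\cB(\lambda)$ is finite, the weights can strictly decrease only finitely many times along the word. Hence there is $N$ with $x_j=x_N$ for all $j\ge N$. Then $\sfS_{N\lambda}(x)=x_N^\infty$ is a Cartan word, so Lemma~\ref{lem:extremal_words} gives $x_N=b_w(\lambda)$ for some $w\in W$. By \eqref{eq:b_w_infty} with the constant sequence $\mu_j=\lambda$, we get $\sfS_{N\lambda}(x)=b_w(\infty)=\sfS_0(b_w(\infty))$, so $x$ lies in the orbit of $b_w(\infty)$.

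\emph{Uniqueness.} Suppose $\sfS_\mu(b_v(\infty))=\sfS_\nu(b_w(\infty))$ for some $\mu,\nu\in\bP^+$. The first step is to show $\sfS_\mu(b_w(\infty))=b_w(\infty)$ for every $\mu$. To see this, choose the additively cofinal sequence $\mu_0=\mu$, $\mu_j=\lambda$ for $j\ge1$ in \eqref{eq:b_w_infty}. Shifting by $\mu$ removes the letter $b_w(\mu_0)$ and leaves $\ldots b_w(\lambda)b_w(\lambda)=b_w(\infty)$. So the hypothesis becomes $b_v(\infty)=b_w(\infty)$. Comparing the $\lambda$-truncations $\sfR_\lambda$ gives $b_v(\lambda)=b_w(\lambda)$. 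Since $\lambda$ is strictly dominant, its support is all of $\Simples$, so $W_I$ is trivial and therefore $v=w$.

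The main obstacle is the shift-invariance $\sfS_\mu(b_w(\infty))=b_w(\infty)$ for arbitrary, possibly non-strictly dominant, $\mu$. It needs the letter decomposition of Proposition~\ref{prop:words} for a non-constant sequence $(\mu_j)$, combined with \eqref{eq:b_w_infty}. An alternative route is to observe directly that $\sfS_\mu$ sends the extremal element $b_w(\lambda+\mu)$ to $b_w(\lambda)$. This holds because $\iota(b_w(\lambda+\mu))=b_w(\lambda)\otimes b_w(\mu)$: the right-hand side is the unique element of that weight in the tensor product.
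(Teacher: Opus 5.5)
Your proof is correct and follows the paper's own, very brief argument. Existence comes from eventual stabilisation of Cartan words in $\cB(\lambda)^\infty$ via Corollary~\ref{cor:BxB_decreasing}, with the stable letter forced to be extremal. Uniqueness comes from the shift-invariance of $b_w(\infty)$ (recorded in Remark~\ref{rmk:shift-invariant}) together with the regularity of $\lambda$; you have written out details the paper leaves implicit, and both of your routes to $\sfS_\mu(b_w(\infty))=b_w(\infty)$ are valid.
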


\begin{remark}
  \label{rmk:shift-invariant}
  Note that a semi-infinite crystal $x$ belongs to the orbit $[b_w(\infty)]$ if and only if $\sfS_\mu(x) = b_w(\infty)$ for some sufficiently large $\mu\in\bP^+$.  The $b_w(\infty)$ are precisely the shift-invariant elements of $\crystal(\infty)$.
\end{remark}

\begin{corollary}
  \label{cor:isotropy}
The isotropy group of $\GK$ at any point is isomorphic to $\bP$.\qed
\end{corollary}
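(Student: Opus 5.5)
Since every point of $\crystal(\infty)$ is tail equivalent to some $b_w(\infty)$ by Theorem~\ref{thm:orbits}, and isotropy groups at points in the same orbit are conjugate in the groupoid, hence isomorphic, it suffices to compute the isotropy group at $b_w(\infty)$ for each $w\in W$.

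First we show that the isotropy group at $b_w(\infty)$ is all of $\bP$. By definition it is
\[
\GK_{b_w(\infty)}^{b_w(\infty)}=\{(b_w(\infty),\mu-\nu,b_w(\infty)) \mid \mu,\nu\in\bP^+,\ \sfS_\mu(b_w(\infty))=\sfS_\nu(b_w(\infty))\}.
\]
By Remark~\ref{rmk:shift-invariant}, $b_w(\infty)$ is shift-invariant, so $\sfS_\mu(b_w(\infty))=b_w(\infty)$ for every $\mu\in\bP^+$. The condition defining the set therefore holds for all pairs $\mu,\nu\in\bP^+$.

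To check the shift-invariance directly rather than rely on the remark, one can use Lemma~\ref{lem:extremal_words}. Write $b_w(\infty)=(b_w(\lambda))_{\lambda}$ via~\eqref{eq:b_w_infty}. The Cartan word decomposition of $b_w(\lambda+\mu)$ in $\cB(\lambda)\otimes\cB(\mu)$ is $b_w(\lambda)\otimes b_w(\mu)$, because this tensor product is extremal of weight $w(\lambda+\mu)$ and so lies in the Cartan component. Hence $\sfS_\mu(b_w(\lambda+\mu))=b_w(\lambda)$.

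Since $\bP=\bP^+-\bP^+$, every $\lambda\in\bP$ arises as some $\mu-\nu$. The cocycle $c:(x,\lambda,y)\mapsto\lambda$ restricts to a homomorphism from the isotropy group to $\bP$, since the product is $(x,\mu,x)(x,\nu,x)=(x,\mu+\nu,x)$. This homomorphism is injective because $x$ is fixed, and surjective by the previous observation. So the isotropy group at $b_w(\infty)$ is isomorphic to $\bP$.

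Finally, for an arbitrary $x$ in the orbit of $b_w(\infty)$, pick an arrow $g$ from $x$ to $b_w(\infty)$. Conjugation by $g$ is a group isomorphism between the two isotropy groups, and since $c$ is a cocycle it even preserves the $\bP$-label. No step presents a real obstacle; the only point needing care is the shift-invariance of $b_w(\infty)$, which is justified above.
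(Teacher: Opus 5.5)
Your proof is correct and takes essentially the same approach as the paper, which leaves it implicit here but spells it out in the analogous Theorem~\ref{thm:GXIL_orbits}: every orbit contains a shift-invariant representative $b_w(\infty)$, every $\mu-\nu\in\bP$ is realised at that representative, and isotropy groups are isomorphic along orbits. Your direct check that $\sfS_\mu(b_w(\infty))=b_w(\infty)$ is a sound substitute for citing Remark~\ref{rmk:shift-invariant}; it uses that $b_w(\lambda)\otimes b_w(\mu)$ is the unique element of extremal weight $w(\lambda+\mu)$ and therefore lies in the Cartan component.
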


\subsection{Topology of the orbit space}
\label{sec:orbit_topology}

We will write $\crystal(\infty)/\GK$ for the set of orbits of the crystal groupoid, equipped with the quotient topology from $\crystal(\infty)$.
Given $x\in\crystal(\infty)$ we will write
\begin{equation}
\label{eq:orbit_notation}
  [x] \in \crystal(\infty)/\GK
\end{equation}
for its $\GK$-orbit.

\begin{theorem}
\label{thm:orbit_topology}
  Under the bijection
  \[
    W \to \crystal(\infty) / \GK; \qquad w\mapsto[b_w(\infty)]
  \]
  of Theorem~\ref{thm:orbits}, the topology on $\crystal(\infty)/\GK$ is determined by the fact that, for any $v,w\in W$,
  \begin{equation}
  \label{eq:orbit_topology}
    [b_v(\infty)] \in \overline{[b_w(\infty)]}
    \quad \Leftrightarrow \quad
    v \leq w
\end{equation}
\end{theorem}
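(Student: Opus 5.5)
Fix a strictly dominant weight $\lambda$; its support $I$ is all of $\Simples$, so $W_I$ is trivial and cosets are just Weyl group elements. By Corollary~\ref{cor:subshift}, identify $\crystal(\infty)$ with semi-infinite words in $\cB(\lambda)$ whose successive pairs lie in the Cartan component. A basic neighbourhood of a point $x=\ldots x_1x_0$ is a cylinder set $\sfR_{n\lambda}^{-1}(x_{n-1}\ldots x_0)$, namely all Cartan words ending in the given finite word. The quotient map $\crystal(\infty)\to\crystal(\infty)/\GK$ is open, because the groupoid is étale. Hence the saturation of an open set is open, and closures in the orbit space are images of closures of saturated sets. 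So \eqref{eq:orbit_topology} amounts to the following claim: $b_v(\infty)$ lies in the closure of the orbit $[b_w(\infty)]$ in $\crystal(\infty)$ if and only if $v\le w$. This claim describes the closure of each point in the finite space $W$, which determines the topology.

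\emph{($\Leftarrow$)} Suppose $v\le w$. For each $n$, consider the word
\[
x^{(n)} = \cdots b_w(\lambda)b_w(\lambda)\,b_v(\lambda)^n .
\]
All successive pairs are either $b_w\otimes b_w$, $b_v\otimes b_v$ (Cartan by Corollary~\ref{cor:BxB_decreasing}) or $b_w\otimes b_v$ (Cartan by Proposition~\ref{prop:extremal_products}, since $v\le w$). So $x^{(n)}\in\crystal(\infty)$. It is tail equivalent to $b_w(\infty)$, since $\sfS_{n\lambda}(x^{(n)})=b_w(\infty)$. It also agrees with $b_v(\infty)$ on the last $n$ letters, so $x^{(n)}\to b_v(\infty)$.

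\emph{($\Rightarrow$)} Suppose $b_v(\infty)$ is a limit of points $x^{(k)}$ in the orbit of $b_w(\infty)$. Fix $n$ large. Some $x^{(k)}$ ends in $b_v(\lambda)^n$, and by Remark~\ref{rmk:shift-invariant} it is eventually equal to $b_w(\lambda)$. So we have a Cartan word of the form
\[
\cdots b_w b_w\, y_m\cdots y_1\, b_v^n .
\]
By Corollary~\ref{cor:BxB_decreasing}, the weights are weakly increasing from left to right along successive pairs, with equality only when the letters are equal. I need to pass from this chain to the Bruhat relation $v\le w$. The approach is to use Proposition~\ref{prop:Demazure} iteratively on the finite Cartan word.

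1. Since $b_w(\lambda)\otimes y_m$ is Cartan, Proposition~\ref{prop:Demazure} gives $y_m\in\cB_w(\lambda)$.
2. Next, the letter $y_m y_{m-1}$ of the longer block must be controlled. Take the finite Cartan word $b_w(\lambda)^N y_m\cdots y_1 b_v(\lambda)$ and regroup it as a single Cartan product $b_w(N\lambda)\otimes c$, where $c=y_m\cdots y_1b_v(\lambda)\in\cB((m+1)\lambda)$. Grouping is legitimate: by the Cartan component of iterated tensor products (Definition~\ref{def:Cartan_component}), the word $b_w(\lambda)^N$ is the extremal element $b_w(N\lambda)$, and $c$ is a Cartan word.
3. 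The supports of $N\lambda$ and $(m+1)\lambda$ agree, so Proposition~\ref{prop:Demazure} applies and gives $c\in\cB_w((m+1)\lambda)$.
4. Apply the right-truncation map $\sfR_\lambda$, which extracts the last letter $b_v(\lambda)$. This reduces the problem to the lemma below.

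The main obstacle is this lemma: truncation preserves Demazure crystals, in the sense that $\sfR_\mu(\cB_w(\lambda+\mu))\subseteq\cB_w(\mu)$. I would prove it using the standard fact that, for Cartan components, the Demazure crystal $\cB_w(\lambda+\mu)$ embeds into $\cB_w(\lambda)\otimes\cB_w(\mu)$; this follows from Kashiwara's tensor rule applied to strings of $f_i$ (Littelmann/Kashiwara). Granting the lemma, $b_v(\lambda)\in\cB_w(\lambda)$, and Proposition~\ref{prop:extremal_in_Demazure} yields $v\le w$.
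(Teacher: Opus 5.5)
Your proposal is correct and takes essentially the same route as the paper: for $v\le w$ you use the words $b_w(\lambda)^\infty b_v(\lambda)^n$, and for the converse you apply Proposition~\ref{prop:Demazure} to the regrouped Cartan product, truncate to the last letter, and finish with Proposition~\ref{prop:extremal_in_Demazure}. The only differences are that you justify the passage from closures in the orbit space to closures in $\crystal(\infty)$ via openness of the quotient map (the paper instead shifts the limit point onto $b_v(\infty)$), and that you spell out, via the tensor product rule, the inclusion $\sfR_\mu(\cB_w(\lambda+\mu))\subseteq\cB_w(\mu)$, which the paper attributes to the definition of Demazure crystals.
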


\begin{proof}
  Firstly, suppose that $v \leq w$.  Fix a strictly dominant weight $\lambda\in\bP^+$.
  By Proposition \ref{prop:extremal_products},
  \[
    z_n := b_w(\lambda)^\infty b_v(\lambda)^n
  \]
  is a semi-infinite Cartan word for every $n$.  These $z_n$ belong to the orbit $[b_w(\infty)]$ and converge to $b_v(\infty)$.

  Conversely, suppose that $[b_v(\infty)] \in \overline{[b_w(\infty)]}$, so there is a sequence $(z_n)$ in $[b_w(\infty)]$ which converges to an element $z$ of $[b_v(\infty)]$.  By applying the shift $\sfS_\mu$ to this sequence, for $\mu\in\bP^+$ sufficiently large, we may assume that $z=b_v(\infty)$, see Remark~\ref{rmk:shift-invariant}.  Then for $n$ sufficiently large, the expression of $z_n$ as a semi-infinite Cartan word in $\cB(\lambda)^\infty$ has the form
  \[
    z_n = b_w(\lambda)^\infty x b_v(\lambda)
  \]
  for some finite Cartan word $x\in\cB(\lambda)^k$. 
  By Proposition \ref{prop:Demazure}, the image of $x\otimes b_v(\lambda)$ in $\cB((k+1)\lambda)$ lives in the Demazure crystal $\cB_w((k+1)\lambda)$, and from the definition of the Demazure crystal it follows that $b_v(\lambda)\in\cB_w(\lambda)$.  Proposition \ref{prop:extremal_in_Demazure} then gives $v\leq w$. 
\end{proof}


\section{The crystal groupoid of a flag variety}
\label{sec:homogeneous spaces}

\subsection{Quantised function algebras of flag varieties}

The crystallisation of the coordinate algebra of a flag variety was described in \cite{MatYun:crystals} as a $*$-subalgebra of a higher-rank graph $C^*$-algebra.  We are using ``flag variety'' here as a stand-in term for the range of Poisson homogeneous spaces considered, for instance, in \cite{NesTus:functions}.  These include the flag varieties of a compact semisimple Lie group, their canonical torus bundles, as well as a family of intermediate homogeneous spaces.

In this section, we will largely follow the notation of \cite{NesTus:functions}, with the notable exception that we will index flag varieties not by the set of simple roots $S\subseteq\Simples$ of the stabiliser subgroup, but rather with the complement of this set $I=S^\rmc$.  We begin by recalling the definitions.

Let $K$ be a simply connected compact semisimple Lie group.  Let $I = S^\rmc \subseteq \Simples$ be a set of simple roots of $K$.  Let $K^I$ be the closed subgroup of $K$ whose complexified Lie algebra is
\begin{equation}
\label{eq:k_I}
  \lie{k}^I_{\CC} = \lie{h} \oplus \!\!\bigoplus_{\alpha\in\Phi\cap\Vect(I^\rmc)}\hspace{-2ex} \lie{g}_{\alpha},
\end{equation}
where $\lie{g}=\lie{k}_\CC$, $\lie{h}=\lie{t}_\CC$, and $\Phi$ is the set of roots of $\lie{g}$.  Let $\widetilde{K}^I$ denote the semisimple component of $K^I$.  As mentioned, these groups are denoted in \cite{NesTus:functions} as $K^S$ and $\widetilde{K}^S$, respectively, with $S=I^\rmc$.

The homogeneous space
\[
X^I = K/K^I
\]
is a generalised flag variety and
\[
Y^I = K/\widetilde{K}^I \to X^I
\]
is the canonical compact torus bundle over $X^I$.
Let $\bPI$ denote the sublattice of $\bP$ consisting of weights orthogonal to the roots of $\widetilde{K}^I$, \emph{i.e.},
\begin{align}
  \label{eq:bP_I}
  \bPI &= \bP \cap \Vect\{\varpi_i \mid \alpha_i \in  I\},
\end{align}
and put $\bPIplus = \bPI\cap\bP^+$.
This agrees with the notation of \cite{MatYun:crystals}, but $\bPI$ corresponds to $\bP(S)$ in \cite{NesTus:functions}.

For any sublattice
\[
  L \leq \bPI,
\]
we have a homogeneous space intermediate between $K/\widetilde{K}^I$ and $K/K^I$ as follows.  Let $T_L$ be the annihilator of $L$ in $T$ and let $K^{I,L}$ be the subgroup of $K$ generated by $\widetilde{K}^I$ and $T_L$. We put
\begin{equation}
  \label{eq:XIL}
  X^{I,L} = K/K^{I,L}.
\end{equation}
The extreme cases are
\begin{equation}
  \label{eq:XI}
  K^{I,\mathbf{0}} = K^I, \qquad X^{I,\mathbf{0}} = X^I
\end{equation}
and
\begin{equation}
  \label{eq:YI}
  K^{I,\bPI} = \widetilde{K}^I, \qquad X^{I,\bPI} = Y^I.
\end{equation}
Note also that the case where $I=\Simples$ and $L\leq\bP$ is a finite index subgroup includes the example of any non--simply connected quotient of $K$.

The $*$-algebras of regular functions $\cO(X^{I,L}_q)$ and continuous functions $C(X^{I,L}_q)$ on the quantised homogeneous spaces $X^{I,L}_q$ with $0<q<\infty$ have been studied by many authors.  We mention in particular \cite{StoDij,Stokman:quantum_orbit_method,NesTus:functions}.  The $q=0$ limits $\cO(X^{I,L}_0)$ were introduced in \cite{MatYun:crystals}.  We recall the definition.

\begin{definition}
\label{def:OXIL}
We resume the notation of Equation~\eqref{eq:fa}. The crystallised function algebra $\cO(Y^I_0)$ is the $*$-subalgebra of $\cO(K_0)$ generated by the elements of the form
  \begin{equation}
  \label{eq:fa_for_YI}
    f_a = \ip{a\mid\cdot\mid b_e(\nu)}, \qquad \text{with }\nu\in\bPIplus,\ a\in\cB(\nu).
  \end{equation}
  The algebra $\cO(X^{I,L}_0)$ is the $*$-subalgebra of $\cO(Y^I_0)$ spanned by elements with graded degree in $L$.
\end{definition}

Our first step in obtaining a groupoid realisation of $\cO(X^{I,L}_0)$ is to use Theorem~\ref{thm:isomorphism} to identify $\cO(X_0^{I,L})$ as a $*$-subalgebra of the groupoid algebra $\Cc^\infty(\GK)$.

\begin{lemma}
\label{lem:flag_subalgebra}
  Under the isomorphism $\cO(K_0) \cong \Cc^\infty(\GK)$ of Theorem~\ref{thm:isomorphism}, we have
  \begin{equation}
    \label{eq:flag_subalgebra}
    \cO(Y^I_0) \cong \Vect\{\mathbf{1}_{Z(a,b)} \mid
      a\in\cB(\mu),\ b\in\cB(\nu)
      \text{ with } \mu,\nu\in\bPIplus\}.
  \end{equation}
  The algebra $\cO(X^{I,L}_0)$ is the subalgebra of $\cO(Y^I_0)$ spanned by elements with graded degree in $L$.
\end{lemma}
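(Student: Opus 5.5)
We prove \eqref{eq:flag_subalgebra} by showing two inclusions, writing $\cA_I$ for the right-hand side. The last sentence of the lemma then follows at once. Definition~\ref{def:OXIL} defines $\cO(X^{I,L}_0)$ as the span of elements of $\cO(Y^I_0)$ of graded degree in $L$. The isomorphism of Theorem~\ref{thm:isomorphism} is $\bP$-graded, with the opposite grading on $\cO(K_0)$. Since $L$ is a subgroup, $L=-L$, so the reversal of the grading does not matter.

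For $\cO(Y^I_0)\subseteq\cA_I$: under the isomorphism the generators $f_a$ with $a\in\cB(\nu)$, $\nu\in\bPIplus$, map to $\mathbf{1}_{Z(\emptyset,a)}$, and these lie in $\cA_I$ because $0\in\bPIplus$. It therefore suffices to show that $\cA_I$ is a $*$-subalgebra. Closure under $*$ is Lemma~\ref{lem:relations}(b). For products, we need $\mathbf{1}_{Z(a,b)}\mathbf{1}_{Z(c,d)}\in\cA_I$ whenever $a\in\cB(\mu)$, $b\in\cB(\nu)$, $c\in\cB(\mu')$, $d\in\cB(\nu')$ with all weights in $\bPIplus$. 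Using Lemma~\ref{lem:relations}(d) and (b), we write
\[
\mathbf{1}_{Z(a,b)}\mathbf{1}_{Z(c,d)}=\mathbf{1}_{Z(a,\emptyset)}\,\mathbf{1}_{Z(\emptyset,b)}\mathbf{1}_{Z(c,\emptyset)}\,\mathbf{1}_{Z(\emptyset,d)} .
\]
We then expand the middle factor by (e) as a sum of terms $\mathbf{1}_{Z(b',c')}$ with $b'\in\cB(\mu')$ and $c'\in\cB(\nu)$; the key point is that these weights again lie in $\bPIplus$. After that we recombine $\mathbf{1}_{Z(a,\emptyset)}\mathbf{1}_{Z(b',\emptyset)}$ and $\mathbf{1}_{Z(\emptyset,c')}\mathbf{1}_{Z(\emptyset,d)}$ using (c) and its adjoint, and finally use (d) to land in some $\mathbf{1}_{Z(b'a,c'd)}$ or $0$. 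The weights $\mu+\mu'$ and $\nu+\nu'$ lie in $\bPIplus$ since it is a semigroup.

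The only care needed is the order of concatenation. In (c), $\mathbf{1}_{Z(\emptyset,a)}\mathbf{1}_{Z(\emptyset,b)}=\mathbf{1}_{Z(\emptyset,ab)}$, so taking adjoints gives $\mathbf{1}_{Z(b,\emptyset)}\mathbf{1}_{Z(a,\emptyset)}=\mathbf{1}_{Z(ab,\emptyset)}$. The concatenated word must therefore be read accordingly, but in every case it is a Cartan word of weight in $\bPIplus$. Alternatively, one can avoid this bookkeeping by checking directly from the definition of the basic bisections that $Z(a,b)Z(c,d)$ is a disjoint union of sets $Z(a'',d'')$ with the appropriate weights.

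For $\cA_I\subseteq\cO(Y^I_0)$: by Lemma~\ref{lem:relations}(d) and (b), $\mathbf{1}_{Z(a,b)}=\mathbf{1}_{Z(\emptyset,a)}^*\mathbf{1}_{Z(\emptyset,b)}$. This is the image of $f_a^*f_b$, which lies in $\cO(Y^I_0)$ when $\mu,\nu\in\bPIplus$. This direction is immediate.

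The main obstacle is the closure of $\cA_I$ under products, which is the only place where something beyond formal manipulation is needed. What must be checked is that the common Cartan extensions in Lemma~\ref{lem:relations}(e) stay within weights from $\bPIplus$. This holds because, for $a\in\cB(\mu)$ and $b\in\cB(\nu)$, the set $\CCE(a,b)$ consists of elements of $\cB(\mu+\nu)$ whose decomposition pieces $a'$ and $b'$ lie in $\cB(\nu)$ and $\cB(\mu)$ respectively.
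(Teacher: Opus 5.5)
Your proof is correct and follows the same route as the paper. You show via Lemma~\ref{lem:relations} that the span $\cA_I$ is a $*$-subalgebra, and you use $\mathbf{1}_{Z(a,b)}=\mathbf{1}_{Z(\emptyset,a)}^*\mathbf{1}_{Z(\emptyset,b)}$ to see that it is generated by the images of the generators $f_a$ of $\cO(Y^I_0)$. The statement about $L$ then follows because the isomorphism is graded. The only difference is that you write out the product-closure computation (via (d), (e), (c) and its adjoint) that the paper leaves implicit.
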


\begin{proof}
  The relations of Lemma~\ref{lem:relations} show that the right-hand side of~\eqref{eq:flag_subalgebra} is a $*$-subalgebra of $\cO(K_0)$; let us temporarily denote it by $\cA$.  Moreover, the equality
  \[
  \mathbf{1}_{Z(a,b)}=\mathbf{1}_{Z(\emptyset,a)}^* \mathbf{1}_{Z(\emptyset,b)}
  \]
  shows that $\cA$ is generated as a $*$-algebra by the elements
  \[
    \mathbf{1}_{Z(\emptyset,a)}, \qquad \text{with }\nu\in\bPIplus,\ a\in\cB(\nu).
  \]
  In light of Definition~\ref{def:OXIL}, the result now follows directly from the isomorphism~\eqref{eq:isomorphism}.
\end{proof}

\subsection{The crystal groupoid of a flag variety}

Lemma~\ref{lem:flag_subalgebra} realises the crystallised function algebra $\cO(X^{I,L}_0)$ of a flag variety as a $*$-subalgebra of a groupoid algebra.  But our goal is to realise $\cO(X^{I,L}_0)$ as a groupoid algebra in its own right.  In this section, we define a crystal groupoid $\GXIL\rightrightarrows\crystal_I(\infty)$ associated to any Poisson homogeneous space $X^{I,L}$.  We will later prove that $\cO(X^{I,L}_0) \cong \Cc^\infty(\GXIL)$.

\begin{definition}
  \label{def:B_I}
  Let $I\subseteq\Simples$.  We define the restricted semi-infinite crystal $\crystal_I(\infty)$ as the inverse limit
    \begin{equation}
      \label{eq:B_I}
      \crystal_I(\infty) = \underset{\lambda\in\bPIplus}{\underleftarrow{\lim}} \cB(\lambda)
    \end{equation}
  over all finite crystals with highest weights in $\bPIplus$,
  with connecting maps $\sfR_\mu:\cB(\lambda) \to \cB(\mu)$ for $\lambda\geq\mu$.  As for $\crystal(\infty)$, it is endowed with the inverse limit topology.
\end{definition}

One can realise the elements $x\in \crystal_I(\infty)$ as semi-infinite Cartan words
\begin{equation}
\label{eq:I-word}
  x=\ldots x_3 x_2 x_1 x_0 \ \in\ \prod_{j=0}^\infty \cB(\mu_j),
\end{equation}
just as we did for $\crystal(\infty)$ in Sections \ref{sec:words}~and~\ref{sec:braiding}.  The only difference is that the weights $\mu_j$ must belong to $\bPIplus$ and satisfy
\begin{equation}
\label{eq:I-cofinal}
  \forall \lambda \in \bPIplus, \quad \exists n\in\NN \text{ such that }
  \lambda \leq \sum_{j=0}^n \mu_j.
\end{equation}

The restricted semi-infinite crystal $\crystal_I(\infty)$ admits a restricted action
\begin{equation}
  \label{eq:I-shift}
  \sfS : \bPIplus \curvearrowright \crystal_I(\infty)
\end{equation}
of the semigroup $\bPIplus$ as in Section~\ref{sec:action}.

\begin{definition}
  \label{def:GIL}
  Let $X^{I,L}$ be the Poisson homogeneous space of~\eqref{eq:XIL}, where $I\subseteq\Simples$ and $L\leq\bPI$.  The crystal groupoid of $X^{I,L}$ is the Deaconu-Renault groupoid $\GXIL\rightrightarrows\crystal_I(\infty)$ defined by
  \begin{multline}
    \label{eq:GXIL}
    \GXIL = \{(x,\mu-\nu,y) \in \crystal_I(\infty) \times \bPI \times \crystal_I(\infty) \mid \\
      \sfS_\mu(x) = \sfS_\nu(y) \text{ with } \mu,\nu\in\bPIplus,\ \mu-\nu\in L\},
  \end{multline}
  with groupoid operations analogous to those of Definition~\ref{def:crystal_groupoid}.
  It is given the topology generated by the base of clopen sets
  \begin{multline}
  \label{eq:Z_I}
    Z_I(a,b) = \{(za,\mu-\nu,zb) \in \GXIL \mid\\
     z\in\crystal_I(\infty) \text{ such that } za,zb\in\crystal_I(\infty)\},
  \end{multline}
  for all $a\in\cB(\mu)$, $b\in\cB(\nu)$ with $\mu,\nu\in\bPIplus$ and $\mu-\nu\in L$.

  When $L=\mathbf{0}$ or $L=\bPI$ we denote $\GXIL$ by $\GXI$ or $\GYI$, respectively, \emph{cf.}~\eqref{eq:XI} and~\eqref{eq:YI}.
\end{definition}

The next three subsections are devoted to proving the following generalisation of Theorem~\ref{thm:isomorphism}.

\begin{theorem}
\label{thm:isomorphism2}
Let $I\subseteq\Simples$ and $L\leq\bPI$.  We have graded $*$-isomorphisms
\[
  \cO(X^{I,L}_0) \cong \Cc^\infty(\GXIL) ,\qquad
  C(X^{I,L}_0) \cong C^*(\GXIL),
\]
where $\cO(X^{I,L}_0)$ and $C(X^{I,L}_0)$ are equipped with the opposite $\bP$-grading from that of~\eqref{eq:OKq_gauge_action}.
\end{theorem}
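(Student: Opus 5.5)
The plan is to realise $\Cc^\infty(\GXIL)$ inside $\Cc^\infty(\GK)$ via a map induced by the restriction $\pi_I:\crystal(\infty)\to\crystal_I(\infty)$, $(b_\lambda)_{\lambda\in\bP^+}\mapsto(b_\lambda)_{\lambda\in\bPIplus}$, and then compare with Lemma~\ref{lem:flag_subalgebra}.

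\textbf{Step 1: the restriction map.} First I will check that $\pi_I$ is a continuous surjection intertwining the shift actions of $\bPIplus$. Surjectivity holds because every element of $\crystal_I(\infty)$, written as a semi-infinite Cartan word in some $\cB(\mu)^\infty$ with $\mu\in\bPIplus$ of support $I$, extends to the left by a Cartan word in $\bP^+$-weights; a compactness argument in the inverse limit does this. It then induces a groupoid map
\[
\pi_I:\{(x,\mu-\nu,y)\in\GK\mid \mu,\nu\in\bPIplus,\ \mu-\nu\in L\}\to\GXIL.
\]
The domain $\cG_{I,L}(K)$ is a clopen subgroupoid of $\GK$. Pulling back functions along $\pi_I$ gives a linear map $\pi_I^*:\Cc^\infty(\GXIL)\to\Cc^\infty(\GK)$ with $\pi_I^*\mathbf{1}_{Z_I(a,b)}=\mathbf{1}_{Z(a,b)}$, because $\pi_I^{-1}(Z_I(a,b))=Z(a,b)$ for $a,b$ with highest weights in $\bPIplus$. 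Surjectivity of $\pi_I$ gives injectivity of $\pi_I^*$.

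\textbf{Step 2: multiplicativity.} The main obstacle is showing $\pi_I^*$ is a $*$-homomorphism. Pullback along a groupoid homomorphism respects convolution only if $\pi_I$ is bijective on fibres of the range map, i.e.\ a local "covering" condition. For $(x',g)$ with $r(g)=\pi_I(x')$, I must show there is a unique lift $g'\in\cG_{I,L}(K)$ with $r(g')=x'$. Uniqueness is clear since the $\bP$-coordinate is fixed. For existence, if $g=(\pi_I(x'),\mu-\nu,y)$ with $\sfS_\mu\pi_I(x')=\sfS_\nu y$, I will produce $y'$ by taking the full tail $\sfS_\mu(x')\in\crystal(\infty)$ and prepending the $\nu$-letter of $y$. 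The point to verify is that $\sfS_\mu(x')\cdot y_\nu$ is Cartan, which uses the Cartan-braiding argument from the proof of Lemma~\ref{lem:relations}(e).

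Alternatively, I can avoid this fibre check by verifying multiplicativity directly on the spanning set $\mathbf{1}_{Z_I(a,b)}$. The relations of Lemma~\ref{lem:relations} hold verbatim in $\GXIL$ with $\CCE$ computed in $\bPIplus$-crystals, by the same proofs. Since both sides are spanned by these indicators, this suffices, and I will take this route first.

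\textbf{Step 3: identification and completion.} By Step 2, $\pi_I^*$ is an injective $*$-homomorphism. Its image is the span of the $\mathbf{1}_{Z(a,b)}$ with $\mu,\nu\in\bPIplus$ and $\mu-\nu\in L$, which by Lemma~\ref{lem:flag_subalgebra} and Theorem~\ref{thm:isomorphism} is $\cO(X^{I,L}_0)$. The map is graded since degrees are read off from the cocycle. This gives $\cO(X^{I,L}_0)\cong\Cc^\infty(\GXIL)$.

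For the $C^*$-statement, $C(X^{I,L}_0)$ is the closure of $\cO(X^{I,L}_0)$ in $C(K_0)\cong C^*(\GK)$. I will show that $\pi_I^*$ is isometric for the full norms. Both groupoids are amenable, since $\GXIL$ is a Deaconu-Renault groupoid of an $\NN^k$-action; this is \cite[Lemma~3.5]{SimWil:primitive_ideals}. Hence it suffices to compare reduced norms. For those, the left regular representation of $\GK$ at $x'$, restricted to the image, is unitarily equivalent to a direct sum of copies of the regular representation of $\GXIL$ at $\pi_I(x')$, by the fibre bijection from Step 2. Surjectivity of $\pi_I$ then gives equality of norms.
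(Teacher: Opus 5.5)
Your architecture matches the paper's. You pull back along $\pi_I=\sfR_I\times\id\times\sfR_I$ from the open subgroupoid of $\GK$ with $\bPIplus$-witnesses, use $\pi_I^{-1}(Z_I(a,b))=Z(a,b)$ and unique lifting of arrows, and read off the image from Lemma~\ref{lem:flag_subalgebra}. The difference is in the supporting lemmas. The paper builds the semi-infinite shift $\sfS_I$ (Proposition~\ref{prop:infinite_shift}, which needs the ordering property of the braiding and the orbit classification) and derives surjectivity, the preimage identity and lifting from Proposition~\ref{prop:SxR}. You bypass $\sfS_I$ entirely:
\begin{itemize}
\item Surjectivity of $\sfR_I$ comes from compactness. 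What makes this work is that each $\sfR_\lambda:\crystal(\infty)\to\cB(\lambda)$ is onto (e.g.\ via $b_{w_0}(\infty)b$), so the sets $\sfR_\lambda^{-1}(b_\lambda)$, $\lambda\in\bPIplus$, are nonempty, closed and downward directed.
\item Existence of lifts comes from the formula $y'=\sfS_\mu(x')\sfR_\nu(y)$. Its Cartan property is exactly the braiding computation in Proposition~\ref{prop:SxR}(ii).
\end{itemize}
This is leaner and treats every $L$ at once. You also give a real argument for the $C^*$-statement, which the paper leaves to the reader. That argument is sound, with two corrections:
\begin{itemize}
\item The decomposition of $\lambda_{x'}\circ\pi_I^*$ over cosets of $\cG_{I,L}(K)$ in $s^{-1}(x')$ yields regular representations of $\GXIL$ at the various points $\pi_I(u)$, not copies of the one at $\pi_I(x')$. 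This is harmless for the norm equality.
\item For general $L$, $\GXIL$ is not itself a Deaconu--Renault groupoid. It is the clopen subgroupoid $c^{-1}(L)$ of $\GYI$, which is amenable.
\end{itemize}

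The step that is not actually justified is uniqueness of lifts: fixing the $\bP$-coordinate does not determine $y'$. The same fact is what the inclusion $\pi_I^{-1}(Z_I(a,b))\subseteq Z(a,b)$ requires, and you state that without proof. The missing fact is: if $(y_1,0,y_2)\in\GIK$ and $\sfR_I(y_1)=\sfR_I(y_2)$, then $y_1=y_2$. Degree $0$ gives $\sfS_\kappa(y_1)=\sfS_\kappa(y_2)$ for some $\kappa\in\bPIplus$, while $\sfR_\kappa(y_i)=\sfR_\kappa(\sfR_I(y_i))$ agree; since $y\mapsto(\sfS_\kappa(y),\sfR_\kappa(y))$ is injective, $y_1=y_2$. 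Two lifts $(x',\lambda,y_1),(x',\lambda,y_2)$ produce such a pair. For the preimage identity, if $(x'a,\mu-\nu,y'b)\in\GIK$ with $\sfR_I(x')=\sfR_I(y')$, then the product $(x',-\mu,x'a)(x'a,\mu-\nu,y'b)(y'b,\nu,y')=(x',0,y')$ forces $x'=y'$. This replaces Proposition~\ref{prop:SxR}(i).

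Your preferred ``relations route'' does not let you skip the fibre analysis. The preimage identity is its uniqueness half. Injectivity of $\pi_I^*$ needs surjectivity of $\pi_I$ on arrows, and you only proved it on units; that is its existence half. Moreover, for $L\neq\bPI$ the elements $\mathbf{1}_{Z_I(\emptyset,a)}$ do not lie in $\Cc^\infty(\GXIL)$, so Lemma~\ref{lem:relations} only transfers to $\GYI$. Make the fibre bijection the main argument.

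Finally, only openness of $\cG_{I,L}(K)$ is needed. Closedness is never used and fails in general (already for $\SU(3)$ with $I=\{1\}$), so drop ``clopen''.
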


\subsection{Orbits and isotropy groups of the crystal groupoid of a flag variety}

Before proving the isomorphism theorem, Theorem~\ref{thm:isomorphism2}, it will be helpful to specify the tail-equivalence classes of the restricted crystal $\crystal_I(\infty)$.  This will yield the same kind of structure theorems for the crystal groupoid of $X^{I,L}$ as we had for the crystal groupoid of $K$ in Theorems~\ref{thm:orbits} and~\ref{thm:orbit_topology}.

Just as for $\crystal(\infty)$, we can define extremal elements for the restricted semi-infinite crystals $\crystal_I(\infty)$.  Let $I\subseteq\Simples$.  Let $\mu\in\bPIplus$ have support $I$, meaning that $\mu\geq\varpi_i$ for every $i\in I$.   As in Section~\ref{sec:Cartan_order}, we continue to use the notation $W_I$ for the Weyl group stabiliser of $\mu$, and $[w]_I$ for the coset of $w\in W$ in $W/W_I$.

\begin{definition}
  \label{b_wI}
   For $w\in W$, we define the extremal element
  \begin{equation}
    \label{eq:b_wI}
    b_{[w]_I}(\infty) = b_w(\mu)^\infty \ \in \crystal_I(\infty).
  \end{equation}
\end{definition}

The fact that $b_w(\mu)^\infty$ is indeed a Cartan word in $\crystal_I(\infty)$ follows from Corollary~\ref{cor:BxB_decreasing}.  Note that, as the notation suggests,
\[
  b_{[w]_I}(\infty) = b_{[w']_I}(\infty) \quad \Leftrightarrow \quad [w]_I = [w']_I.
\]

More generally, a semi-infinite word
\[
  \ldots x_3 x_2 x_1 x_0 \in \prod_{j\in\NN}\cB(\mu)
\]
defines an element of the restricted semi-infinite crystal $\crystal_I(\infty)$ if and only if 
$x_{j+1}\otimes x_j$ is in the Cartan component for all $j\in\NN$.
Therefore, by Corollary \ref{cor:BxB_decreasing}, every such semi-infinite Cartan word eventually stabilises to an extremal element, \emph{i.e.}
\[
  x_j = b_{w}(\mu) \quad \text{for all sufficiently large $j$}.
\]
We obtain the following analogue of Theorem~\ref{thm:orbits}, Theorem~\ref{thm:orbit_topology} and Corollary~\ref{cor:isotropy}.

\begin{theorem}
  \label{thm:GXIL_orbits}
  Fix $I\subseteq\Simples$ and $L\leq\bPI$.
  The set
  \[
    \{ b_{[w]_I}(\infty) \mid [w]_I \in W/W_I \}
  \]
  is a complete set of orbit representatives for the crystal groupoid $\GXIL$.  The resulting bijection
  \[
    W/W_I \to \crystal_I(\infty)/\GXIL , \qquad
    [w]_I \mapsto [b_{[w]_I}(\infty)],
  \]
  is a homeomorphism with the quotient topology on the space of $\GXIL$-orbits and the Bruhat-order topology on $W/W_I$.
  The isotropy group at any unit in $\GXIL$ is isomorphic to $L$.
\end{theorem}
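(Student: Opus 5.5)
The plan is to transport the arguments for $\GK$ (Theorems~\ref{thm:orbits}, \ref{thm:orbit_topology} and Corollary~\ref{cor:isotropy}) to the restricted setting. Throughout, we fix $\mu\in\bPIplus$ with support exactly $I$ and realise $\crystal_I(\infty)$ as semi-infinite Cartan words in $\cB(\mu)^\infty$. This is legitimate because the constant sequence $\mu_j=\mu$ satisfies~\eqref{eq:I-cofinal}: every $\lambda\in\bPIplus$ is dominated by some $n\mu$.

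\emph{Orbits.} Every word stabilises to some $b_w(\mu)$ by Corollary~\ref{cor:BxB_decreasing}, so for $n$ large we have $\sfS_{n\mu}(x)=b_{[w]_I}(\infty)$, which gives $(x,n\mu,b_{[w]_I}(\infty))\in\GXIL$. The degree condition $n\mu-0\in L$ is the one point needing care, since $n\mu$ need not lie in $L$. The fix is to use the degree-free relation $\sfS_\alpha(x)=\sfS_\beta(y)$ and adjust it. Indeed, $b_{[w]_I}(\infty)$ is shift invariant, so $\sfS_{n\mu+\gamma}(x)=\sfS_{\gamma}(b_{[w]_I}(\infty))$ for every $\gamma\in\bPIplus$. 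We need some $\gamma,\gamma'$ with $n\mu+\gamma-\gamma'\in L$; taking $\gamma'=n\mu$ and $\gamma=0$ works. Hence $\sfS_{n\mu}(x)=\sfS_{n\mu}(b_{[w]_I}(\infty))$ has degree $0\in L$, so every point lies in the orbit of some $b_{[w]_I}(\infty)$. Distinct cosets give distinct orbits: if $\sfS_\alpha(b_{[v]_I}(\infty))=\sfS_\beta(b_{[w]_I}(\infty))$, then both sides are shift-invariant extremal words, so $b_v(\mu)=b_w(\mu)$ and therefore $[v]_I=[w]_I$.

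\emph{Isotropy.} At $x$, write $\sfS_{n\mu}(x)=b:=b_{[w]_I}(\infty)$. An element $(x,\alpha-\beta,x)$ with $\alpha-\beta\in L$ and $\sfS_\alpha(x)=\sfS_\beta(x)$ is always in $\GXIL$ for $\alpha,\beta$ large, since both sides then equal $b$. The isotropy group is therefore $\{\alpha-\beta : \alpha,\beta\in\bPIplus\}\cap L$. The first set is all of $\bPI$, because $\bPIplus$ generates $\bPI$ as a group, so the isotropy group is $L$.

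\emph{Topology.} This is the step carrying the real content, and I expect the converse direction to be the main obstacle. The forward direction copies the proof of Theorem~\ref{thm:orbit_topology}. If $[v]_I\le[w]_I$, Proposition~\ref{prop:extremal_products} (stated for support $I$) shows that $z_n=b_w(\mu)^\infty b_v(\mu)^n$ lies in $\crystal_I(\infty)$, in the orbit of $b_{[w]_I}(\infty)$, and converges to $b_{[v]_I}(\infty)$.

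For the converse, take $z_n$ in the orbit of $b_{[w]_I}(\infty)$ converging to a point of the orbit of $b_{[v]_I}(\infty)$. Shifting by a large $\sfS_{m\mu}$, which is continuous and preserves orbits, we may assume the limit is $b_{[v]_I}(\infty)$. Then $z_n=b_w(\mu)^\infty x\,b_v(\mu)$ for some finite Cartan word $x$. Proposition~\ref{prop:Demazure} applies because $\mu$ and $k\mu$ have the same support $I$. It shows that $x\otimes b_v(\mu)$, viewed in $\cB(k\mu)$, lies in $\cB_w(k\mu)$, and hence $b_v(\mu)\in\cB_w(\mu)$ after applying the truncation $\sfR_\mu$. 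The fact needed here is that truncation maps Demazure crystals into Demazure crystals; this should be checked, but it is already used in the proof of Theorem~\ref{thm:orbit_topology}. Proposition~\ref{prop:extremal_in_Demazure} then gives $[v]_I\le[w]_I$.

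To conclude that the bijection is a homeomorphism, note that $W/W_I$ is finite. The quotient topology is therefore determined by the orbit closures, so it suffices that the closed sets of the orbit space are exactly the unions of orbit closures, i.e.\ the Bruhat order ideals. This uses that the quotient map is open, since $\GXIL$ is étale, so the closure of an orbit in the quotient corresponds to the saturated closure of that orbit.
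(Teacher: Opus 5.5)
Your proposal is correct and follows essentially the same route as the paper. Eventual stabilisation of words in $\cB(\mu)^\infty$ together with shift-invariance gives a degree-zero arrow to $b_{[w]_I}(\infty)$; the orbit-closure criterion comes from adapting the proof of Theorem~\ref{thm:orbit_topology}, where the support hypothesis of Proposition~\ref{prop:Demazure} holds because $\mu$ and $k\mu$ have the same support; and shift-invariance gives isotropy $L$. You also spell out points the paper leaves tacit: openness of the quotient map, finiteness of $W/W_I$, and the fact that truncation sends $\cB_w(k\mu)$ into $\cB_w(\mu)$. That last fact does hold, by the tensor product rule applied to the Cartan embedding $\cB_w(\lambda+\nu)\into\cB_w(\lambda)\otimes\cB_w(\nu)$.
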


\begin{proof}
  We have just shown that every element $x\in\crystal_I(\infty)$ is tail equivalent to a unique extremal element, so $(x,0,b_{[w]_I}(\infty))\in\GXIL$ for some $w\in W$.
  For the topology on the orbit space, the argument presented in the proof of Theorem~\ref{thm:orbit_topology} applies almost without change, and we deduce that
  \[
    [b_{[v]_I}(\infty)] \in \overline{[b_{[w]_I}(\infty)]} \quad
    \Leftrightarrow \quad
    [v]_I \leq [w]_I.
  \]
  Finally, since $b_{[w]_I}(\infty)$ is shift-invariant, the isotropy group at $b_{[w]_I}(\infty)$ is $L$.
\end{proof}

\subsection{Semi-infinite truncations and semi-infinite shifts}

\begin{definition}
\label{def:R_I}
  Let $I\subseteq\Simples$.  We define the \emph{semi-infinite truncation}
  \[
    \sfR_I : \crystal(\infty) \to \crystal_I(\infty); \qquad
    (x_\lambda)_{\lambda\in\bP^+} \mapsto (x_\lambda)_{\lambda\in\bPIplus}.
  \]
\end{definition}

Note that $\sfR_I$ is continuous.
The following relations follow from~\eqref{eq:RR} and~\eqref{eq:SR}.

\begin{proposition}
\label{prop:RS-semi-infinite}
For any $\nu\in\bPIplus$ we have
\begin{align}
  \label{eq:RR_I}
  &\sfR_\nu \circ \sfR_I = \sfR_\nu : \crystal(\infty) \to \cB(\nu),\\
  \label{eq:R_IS_lambda}
  &\sfS_\nu\circ\sfR_I = \sfR_I\circ\sfS_\nu :\crystal(\infty) \to \crystal_I(\infty).
\end{align}
\end{proposition}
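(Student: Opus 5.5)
Both identities are checked componentwise, since an element of $\crystal_I(\infty)$ is by definition determined by its components $\sfR_\lambda$ for $\lambda\in\bPIplus$. Throughout, fix $x=(x_\lambda)_{\lambda\in\bP^+}\in\crystal(\infty)$.

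For \eqref{eq:RR_I}, the $\nu$-component of $\sfR_I(x)$ is $x_\nu$ by Definition~\ref{def:R_I}. This requires $\nu\in\bPIplus$, so that $\nu$ is one of the retained indices. The $\nu$-component of $x$ itself is also $x_\nu$ by \eqref{eq:R}. So the identity is immediate from the definitions; the only thing to keep in mind is that $\sfR_\nu$ on $\crystal_I(\infty)$ is the coordinate projection of the restricted inverse limit.

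For \eqref{eq:R_IS_lambda}, compare the $\lambda$-components of both sides for an arbitrary $\lambda\in\bPIplus$. The shift on $\crystal_I(\infty)$ is defined exactly as in Section~\ref{sec:action}, restricted to indices in $\bPIplus$. Hence the $\lambda$-component of $\sfS_\nu(\sfR_I(x))$ is $\sfS_\nu(x_{\lambda+\nu})$. This makes sense because $\lambda+\nu\in\bPIplus$, as $\bPIplus$ is a subsemigroup.

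On the other side, the $\lambda$-component of $\sfS_\nu(x)$ in $\crystal(\infty)$ is again $\sfS_\nu(x_{\lambda+\nu})$. Applying $\sfR_I$ merely retains the components indexed by $\lambda\in\bPIplus$. So the two sides agree componentwise.

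The one point needing care is well-definedness of the restricted shift on $\crystal_I(\infty)$, i.e.\ that $(\sfS_\nu(b_{\lambda+\nu}))_{\lambda\in\bPIplus}$ is again compatible under the maps $\sfR_\mu$ for $\mu\le\lambda$ in $\bPIplus$. This follows from \eqref{eq:SR} and \eqref{eq:RR} exactly as in the unrestricted case: for $\mu\le\lambda$,
\[
\sfR_\mu\sfS_\nu(b_{\lambda+\nu}) = \sfS_\nu\sfR_{\mu+\nu}(b_{\lambda+\nu}) = \sfS_\nu(b_{\mu+\nu}).
\]
I expect no real obstacle here. The substance is only the observation that $\bPIplus$ is closed under addition, so every index used stays inside the restricted system.
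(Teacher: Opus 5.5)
Your proof is correct and is essentially the paper's argument. The paper simply asserts that both identities follow from \eqref{eq:RR} and \eqref{eq:SR}. Your componentwise comparison of coordinates indexed by $\bPIplus$ unpacks that one-line justification, together with your check that the restricted shift is well defined, which uses exactly those two relations and the closure of $\bPIplus$ under addition.
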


Just as a semi-infinite crystal $x\in\crystal(\infty)$ can be decomposed by a finite truncation $\sfR_\nu$ and shift $\sfS_\nu$,
\begin{equation*}
   \crystal(\infty) \to \crystal(\infty) \times \cB(\nu), \qquad
   x \mapsto (\sfS_\nu(x),\sfR_\nu(x)),
\end{equation*}
so too can we give a decomposition of $x\in\crystal(\infty)$ via a semi-infinite truncation $\sfR_I$ and semi-infinite shift $\sfS_I$.  In the following, we will view $\bPIplus$ as a directed system via the usual algebraic ordering.

\begin{proposition}
\label{prop:infinite_shift}
  Fix $I\subseteq\Simples$. For any semi-infinite crystal $x\in\crystal(\infty)$, the net of shifts
  \[
    (\sfS_\lambda(x))_{\lambda\in\bPIplus},
  \]
  is eventually constant, in the sense that there exists $\lambda_0\in\bPIplus$ such that
  \[
  \sfS_\lambda(x) = \sfS_{\lambda_0}(x) \qquad \text{for all }\lambda\in\bPIplus,\  \lambda\geq\lambda_0.
  \]
\end{proposition}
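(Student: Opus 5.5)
The plan is to reduce to a statement about finite Cartan words, and then control the net $(\sfS_\lambda(x))_{\lambda\in\bPIplus}$ with a monotonicity argument based on Lemma~\ref{lem:braiding_order}.

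\emph{Step 1: put $x$ in normal form.} Fix $\mu\in\bPIplus$ with support exactly $I$. Let $\nu=\sum_{i\notin I}\varpi_i$, and let $\rho=\mu+\nu$, which is strictly dominant. By Corollary~\ref{cor:subshift} and the remark after Corollary~\ref{cor:BxB_decreasing}, the word of $x$ in $\cB(\rho)^\infty$ eventually stabilises at some $b_w(\rho)$. So there are $n\in\NN$ and $y=\sfR_{n\rho}(x)\in\cB(n\rho)$ with $\sfS_{n\rho}(x)=b_w(\infty)$. Consequently, for every $N$ the finite truncation $\sfR_{(N+n)\rho}(x)$ is the Cartan word $b_w(N\rho)\,y$. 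Since every $\sfR_\kappa(\sfS_\lambda(x))$ equals $\sfR_\kappa\sfS_\lambda(b_w(N\rho)y)$ for $N$ large (by the relations \eqref{eq:SR}, \eqref{eq:RR} in their semi-infinite form), it suffices to find $\lambda_0\in\bPIplus$ such that, for all $\lambda\geq\lambda_0$ in $\bPIplus$ and all large $N$,
\[
\sfS_\lambda\bigl(b_w(N\rho)\,y\bigr)\quad\text{and}\quad\sfS_{\lambda_0}\bigl(b_w(N\rho)\,y\bigr)
\]
have the same truncations, after identifying highest weights via the shift relation \eqref{eq:SS}.

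\emph{Step 2: reformulate as shift-invariance.} By \eqref{eq:SS}, $\sfS_\lambda(x)=\sfS_{\lambda-\lambda_0}(\sfS_{\lambda_0}(x))$. The statement is therefore equivalent to finding $\lambda_0$ such that $z:=\sfS_{\lambda_0}(x)$ is fixed by $\sfS_{\mu'}$ for every $\mu'\in\bPIplus$. Since $\bPIplus$ is generated by the $\varpi_i$ with $i\in I$, it is enough to check $\sfS_{\varpi_i}(z)=z$ for $i\in I$. I expect the invariant points to be exactly those $z$ whose word, using the cofinal sequence that alternates $\mu$- and $\nu$-letters (via Proposition~\ref{prop:words}), has every $\mu$-letter equal to $b_w(\mu)$. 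The intuition is that shifting by $\mu$ then deletes a letter $b_w(\mu)$, which can be reinserted by Cartan braiding past the $\nu$-letters. I would verify this with Proposition~\ref{prop:Cartan_test} together with the fact that $\iota$-maps send extremal elements to tensor products of extremal elements.

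\emph{Step 3: monotonicity gives stabilisation.} Write $\sfS_{k\mu}(x)$ in the alternating word form. Passing from $k$ to $k+1$ removes the rightmost $\mu$-component, and this requires Cartan-braiding that component to the right past the finitely many $\nu$-letters coming from $y$. By Lemma~\ref{lem:braiding_order}, each nontrivial braiding strictly raises the weight of the $\nu$-letters that move in the $\preceq$ order. These letters range over the finite crystals $\cB(\nu)$ inside $\cB(n\rho)$, so only finitely many nontrivial moves can occur, and after some $k_0$ all braidings are trivial. Then $\lambda_0:=k_0\mu$ (enlarged if necessary to dominate the $\varpi_i$-shifts) satisfies the invariance of Step~2.

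The main obstacle is Step~3, specifically making the potential-function argument uniform. The weights of the finitely many $\nu$-letters coming from $y$ must serve as a single bounded, $\prec$-increasing quantity along the whole net, rather than one separate quantity for each truncation level $\kappa$. My first attempt would be to use the total weight of $\sfR_{n\nu}$ of the shifted word as the potential. I would check directly, from Lemma~\ref{lem:braiding_order} and the Cartan braiding criterion, that this total weight never decreases under $\sfS_\mu$, and that it is constant only when the shift acts trivially on the $\nu$-part.
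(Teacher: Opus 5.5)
You have a genuine gap: the uniform potential you flag as ``the main obstacle'' in Step~3 is the whole content of the proposition, and the version you propose does not close it. Your Step~1 is the right start (the paper also begins with $x=b_w(\infty)y$, with $y$ in a single finite crystal), and Lemma~\ref{lem:braiding_order} is the right tool. But three things go wrong.

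First, the direction is reversed. In $\sigma(a\otimes b)=b'\otimes a'$ the letter travelling leftwards satisfies $\wt(b')\prec\wt(b)$, so the $\nu$-letters, and the head in general, go \emph{down} in weight.

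Second, and more seriously, the weight of $\sfR_{n\nu}$ of the shifted word only sees the $\cB(n\nu)$-factor of the head $\sfR_{n\rho}(\cdots)\in\cB(n\rho)\subseteq\cB(n\mu)\otimes\cB(n\nu)$. Even if that weight stabilises, the $\cB(n\mu)$-factor can still change, so you do not get $\sfS_\mu(z)=z$. To rule this out you would need your Step~2 characterisation of invariant points, which you only conjecture. It is also not the right kind of condition: invariance requires the braiding of $b_w(\varpi_i)$ past the head to be trivial, which is a condition on the head, not on which $\mu$-letters occur.

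Third, working only along $\NN\mu$ leaves a further gap. For an abstract action of $\NN^{|I|}$, a point fixed by $\sfS_\mu$ need not be fixed by $\sfS_{\varpi_i}$. So ``enlarging $\lambda_0$'' needs monotonicity along all of $\bPIplus$, not just along multiples of $\mu$.

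The missing idea is not to split the head into letters at all. For $\lambda\in\bPIplus$ put $x_\lambda:=\sfR_{n\rho}(\sfS_\lambda(x))\in\cB(n\rho)$. Since $\sfS_{n\rho}\sfS_\lambda(x)=\sfS_\lambda(b_w(\infty))=b_w(\infty)$, you have $\sfS_\lambda(x)=b_w(\infty)x_\lambda$. For $\lambda'\in\bPIplus$, the relation \eqref{eq:SR} gives $x_{\lambda+\lambda'}=\sfS_{\lambda'}(b_w(\lambda')x_\lambda)$. In other words, $\sigma(b_w(\lambda')\otimes x_\lambda)=x_{\lambda+\lambda'}\otimes b'$ for some $b'\in\cB(\lambda')$, a single Cartan braiding. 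Lemma~\ref{lem:braiding_order} then gives $\wt(x_{\lambda+\lambda'})\preceq\wt(x_\lambda)$, with equality only if $x_{\lambda+\lambda'}=x_\lambda$.

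Now choose $\lambda_0$ so that $\wt(x_{\lambda_0})$ is $\preceq$-minimal among the finitely many weights attained in $\cB(n\rho)$. For $\lambda\geq\lambda_0$ in $\bPIplus$, we have $\lambda-\lambda_0\in\bPIplus$, so minimality forces $x_\lambda=x_{\lambda_0}$ and hence $\sfS_\lambda(x)=\sfS_{\lambda_0}(x)$. This handles the whole net over $\bPIplus$ at once, with no need for Step~2 or for restricting to multiples of $\mu$. This is the paper's argument.
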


\begin{proof}
Thanks to Theorem~\ref{thm:orbits} and Remark \ref{rmk:shift-invariant}, we can write
\[
  x= b_w(\infty) x_0
\]
for some $x_0\in\cB(\mu)$ and some $w\in W$.   Then for every $\lambda\in\bPIplus$, writing
\[
  x_\lambda := \sfR_\mu\circ\sfS_\lambda(x) = \sfS_\lambda(b_w(\lambda)x_0) \in \cB(\mu),
\]
we have
\[
  \sfS_\lambda(x) = b_w(\infty) x_\lambda.
\]
For $\nu\in\bPIplus$, the elements $x_\lambda$ and $x_{\lambda+\nu}$ are related by
\(
  x_{\lambda+\nu} = \sfS_\nu(b_w(\nu)x_\lambda)
\),
which is equivalent to the Cartan braiding relation
\[
  \sigma (b_w(\nu) \otimes x_\lambda) = x_{\lambda+\nu} \otimes b'
\]
for some $b'\in\cB(\nu)$.  By the ordering property of the Cartan braiding, see Lemma~\ref{lem:braiding_order}, the net $(x_\lambda)_{\lambda\in\bPIplus}$ is decreasing, in the sense that
\[
  \wt(x_{\lambda+\nu}) \preceq \wt(x_\lambda), \qquad \forall \nu\in\bPIplus.
\]
Moreover, whenever these weights are equal we have $x_{\lambda+\nu}=x_{\lambda}$.  It follows that $(x_\lambda)_{\lambda\in\bPIplus}$ is eventually constant, and so there is $\lambda_0\in\bPIplus$ such that
\[
  \sfS_{\lambda}(x) = b_w(\infty) x_{\lambda_0}, \qquad
  \text{for all }\lambda\in\bPIplus,\ \lambda\geq\lambda_0.
  \qedhere
\]
\end{proof}

\begin{definition}
\label{def:infinite_shift}
  Let $I\subseteq\Simples$.  We define the \emph{semi-infinite shift}
  \[
    \sfS_I : \crystal(\infty) \to \crystal(\infty); \qquad
    x \mapsto \lim_{\lambda\in\bPIplus} \sfS_\lambda(x),
  \]
  where the limit refers to the ultimately constant value which is given by Proposition~\ref{prop:infinite_shift}.
\end{definition}

The analogue of Equation \eqref{eq:SS} is that for any $\nu\in\bPIplus$,
\begin{equation}
\label{eq:SIS}
  \sfS_I\circ \sfS_\nu = \sfS_\nu\circ\sfS_I = \sfS_I : \crystal(\infty) \to \crystal(\infty).
\end{equation}

For instance, if $I=\Simples$, the semi-infinite shift $\sfS_{\Simples}$ sends $x$ to the unique shift-invariant element in its $\GK$-orbit; see Remark~\ref{rmk:shift-invariant}.  The semi-infinite truncation $\sfR_\Simples:\crystal(\infty)\to\crystal(\infty)$ is the identity.  Note that this example shows that $\sfS_I$ is not usually continuous, unlike $\sfR_I$.

Combining a semi-infinite shift and truncation, we get the decomposition previously alluded to.

\begin{proposition}
\label{prop:SxR}
Let $I\subseteq\Simples$.
\begin{itemize}
  \item[(i)]
The map
\begin{align}
  \label{eq:SxR}
  (\sfS_I,\sfR_I) : \crystal(\infty) & \to \crystal(\infty) \times \crystal_I(\infty); \\
  x &\mapsto (\sfS_{I}(x), \sfR_{I}(x))
  \nonumber
\end{align}
is injective.

\item[(ii)]
If a pair $(z,y) \in \crystal(\infty)\times\crystal_I(\infty)$ belongs to the image of the decomposition map $(\sfS_I,\sfR_I)$ then so does $(z,y')$, for any $y'\in\crystal_I(\infty)$ which is tail equivalent to $y$, meaning that $\sfS_\mu(y) = \sfS_\nu(y')$ for some $\mu,\nu\in\bPIplus$.
\end{itemize}
\end{proposition}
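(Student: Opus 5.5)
Both parts will be proved by writing elements of $\crystal(\infty)$ as semi-infinite Cartan words adapted to $I$.

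Fix $\lambda\in\bPIplus$ with support $I$, and a weight $\kappa\in\bP^+$ with support $I^\rmc$, so that $\lambda+\kappa$ is strictly dominant. Take the additively cofinal sequence $\mu_{2j+1}=\kappa$, $\mu_{2j}=\lambda$. By Proposition~\ref{prop:words}, every $x\in\crystal(\infty)$ becomes a word $\ldots x_3x_2x_1x_0$ whose letters alternate between $\cB(\lambda)$ and $\cB(\kappa)$. The most useful reformulation, though, is the following. For $\nu\in\bPIplus$, the pair $(\sfS_\nu(x),\sfR_\nu(x))$ determines $x$: it corresponds to writing $b_{\rho+\nu}=\sfS_\nu(b_{\rho+\nu})\otimes\sfR_\nu(b_{\rho+\nu})$ in the Cartan component of $\cB(\rho)\otimes\cB(\nu)$, and an element of the Cartan component is determined by its image there. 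The family $(b_\rho)_{\rho\in\bP^+}$ is then recovered from the $b_{\rho+\nu}$ by $\sfR$.

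\textbf{Part (i).} Suppose $\sfS_I(x)=\sfS_I(x')$ and $\sfR_I(x)=\sfR_I(x')$. By Proposition~\ref{prop:infinite_shift}, applied to both $x$ and $x'$, there is $\nu\in\bPIplus$ large enough that $\sfS_\nu(x)=\sfS_I(x)=\sfS_I(x')=\sfS_\nu(x')$. By \eqref{eq:RR_I}, $\sfR_\nu(x)=\sfR_\nu(\sfR_I(x))=\sfR_\nu(x')$. The observation above that $(\sfS_\nu,\sfR_\nu)$ is injective on $\crystal(\infty)$ then gives $x=x'$. This step is straightforward.

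\textbf{Part (ii).} Let $(z,y)=(\sfS_I(x),\sfR_I(x))$, and let $y'$ be tail equivalent to $y$ with $\sfS_\mu(y)=\sfS_\nu(y')$. I will construct $x'$ with $\sfS_I(x')=z$ and $\sfR_I(x')=y'$.

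First I reduce to the case where $y'$ is a simple shift of $y$. Since tail equivalence passes through the common element $\sfS_\mu(y)$, it suffices to handle the two moves $y\mapsto \sfS_\mu(y)$ and $\sfS_\nu(y')\mapsto y'$ separately.

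For the forward move, set $x'=\sfS_\mu(x)$. By \eqref{eq:R_IS_lambda}, $\sfR_I(x')=\sfS_\mu(\sfR_I(x))$, and by \eqref{eq:SIS}, $\sfS_I(x')=\sfS_I(x)=z$. So the forward move is immediate.

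The backward move is the main obstacle: given $y'$ with $\sfS_\nu(y')=\sfR_I(x)$, I need $x'$ with $\sfS_\nu(x')=x$ and $\sfR_I(x')=y'$. The plan is to take $a=\sfR_\nu(y')\in\cB(\nu)$ and to check that the word $xa$ is Cartan. Once it is, set $x'=xa$. Then $\sfS_\nu(x')=x$, so by \eqref{eq:SIS} $\sfS_I(x')=\sfS_I(x)=z$. Also $\sfR_I(x')$ and $y'$ have the same $\nu$-shift and the same $\nu$-truncation, so they are equal by the same injectivity argument, now inside $\crystal_I(\infty)$.

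To check that $xa$ is Cartan, I reduce to finite words: I must show $x_\rho a$ is Cartan for every finite truncation $x_\rho$ of $x$. I plan to use the Cartan braiding criterion, Proposition~\ref{prop:Cartan_test}, in the same way as in the proof of Lemma~\ref{lem:relations}(e).

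1. Choose $\rho$ large with $\rho=\rho_I+\rho'$, where $\rho_I\in\bPIplus$ and $\rho'$ is supported on $I^\rmc$.
2. Write $x_\rho$, via $\sfS_{\rho'}$-type decompositions, so that its right-hand $\bPIplus$-portion is a truncation of $\sfR_I(x)$.
3. The $\bPIplus$-part of $x_\rho a$ is then a truncation of $y'$, hence Cartan.
4. The braiding of $a$ past the $I^\rmc$-letters must be handled separately. Here I expect to use that $\sigma$ between $\cB(\nu)$ with $\nu\in\bPIplus$ and the relevant letters behaves compatibly, together with $\sfS_I(x)$ being invariant under $\bPIplus$-shifts.

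If this direct argument gets stuck, the fallback is to argue by compactness. Approximate $y'$ by its finite truncations, build candidate finite Cartan words $x_\rho a$ via Lemma~\ref{lem:relations}(e)-style common Cartan extensions, and pass to a limit in the inverse system.
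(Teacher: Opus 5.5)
Your part (i), and your reduction of (ii) to a forward move ($x\mapsto\sfS_\mu(x)$) followed by a backward move (appending $a=\sfR_\nu(y')$), are correct. Combined, they produce exactly the paper's candidate $x'=\sfS_\mu(x)\,\sfR_\nu(y')$. The gap is that the one step carrying all the content of (ii), namely that $xa$ is a Cartan word, is never proved: step~4 ends with ``I expect to use\dots''.

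The route you outline there also does not work as stated. Braiding $a$ past letters of $\cB(\kappa)$, with $\kappa$ supported on $I^\rmc$, would require control of $\sigma_{\cB(\kappa),\cB(\nu)}$, and nothing available provides it. The two facts you do obtain in steps~2--3 are that the $I^\rmc$-letter of $x_\rho$ followed by its $\bPIplus$-part is Cartan, and that this $\bPIplus$-part followed by $a$ is Cartan. These do not suffice, because for unequal weights Cartan-ness is not detected by adjacent pairs. Already for $\SU(3)$, $21$ of the $27$ elements of $\cB(\varpi_1)\otimes\cB(\varpi_2)\otimes\cB(\varpi_1)$ have both adjacent pairs Cartan, while the Cartan component has only $15$ elements. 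The $\bPIplus$-invariance of $\sfS_I(x)$ is irrelevant to this check.

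The compactness fallback cannot rescue the argument either. If $(\sfS_I(x'),\sfR_I(x'))=(z,y')$, then $\sfS_\nu(x')$ and $x$ have the same image under $(\sfS_I,\sfR_I)$, so by (i) $x'=xa$ is forced. No limiting procedure avoids checking that $xa$ itself is Cartan. Moreover $\sfS_I$ is not continuous, so the constraint $\sfS_I(x')=z$ would not survive limits.

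The missing idea is that the last $\nu$-letter of $x$ is shared with $y'$ and acts as a buffer, so you never need to split $\rho$ into $I$- and $I^\rmc$-parts. For arbitrary $\rho\in\bP^+$, write $\sfR_{\rho+\nu}(x)=x_2x_1$ with $x_2\in\cB(\rho)$ and $x_1\in\cB(\nu)$. Then $x_1=\sfR_\nu(\sfR_I(x))=\sfR_\nu(\sfS_\nu(y'))$, so $x_1a=\sfR_{2\nu}(y')$ is a Cartan word. Now apply Proposition~\ref{prop:Cartan_test} to $x_2\otimes x_1\otimes a$. The map $\sigma_1$ acts on $\cB(\nu)\otimes\cB(\nu)$, where by \eqref{eq:BxB_braiding} it is the projection onto the Cartan component, so it fixes $x_2\otimes x_1\otimes a$. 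Both conditions of the criterion therefore reduce to $\sigma(x_2\otimes x_1)\neq 0$, which holds because $x$ is Cartan. Hence $x_2x_1a$ is Cartan for every $\rho$, so $xa\in\crystal(\infty)$, and the rest of your argument goes through. This is precisely the computation in the paper's proof, with your two moves merged into one.
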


\begin{proof}
  \begin{itemize}
    \item[\emph{(i)}]
  Suppose $x,x'\in\crystal(\infty)$ have the same image under $(\sfS_I,\sfR_I)$.  Put
  \[
    z=\sfS_{I}(x) = \sfS_{I}(x').
  \]
  Fixing $\lambda_0\in\bPIplus$ sufficiently large as in Proposition~\ref{prop:infinite_shift}, we get
  \[
    x= zy_0, \qquad x' =  zy'_0
  \]
  for some $y_0,y'_0 \in \cB(\lambda_0)$.  Using $\sfR_{I}(x)=\sfR_{I}(x')$, we get $y_0=y'_0$.

  \item[\emph{(ii)}]
  Let $(z,y)$, $(z,y')$ and $\mu,\nu$ be as in the statement, and suppose that there exists $x\in\crystal(\infty)$ such that
  \[
    (\sfS_I(x),\sfR_I(x)) = (z,y).
  \]
  Fix $\lambda\in\bP^+$. Decompose $\sfR_{\lambda+\nu+\mu}(x)$ as a Cartan word
  \begin{equation}
    \label{eq:SxR1}
    \sfR_{\lambda+\nu+\mu}(x) = x_2 x_1 x_0 \quad\in \cB(\lambda)\times\cB(\nu)\times\cB(\mu).
  \end{equation}
  Then
  \[
    \sfR_{\nu+\mu}(y) = \sfR_{\nu+\mu}(x) = x_1x_0,
  \]
  and since $\sfS_\mu(y) = \sfS_\nu(y')$ we get $\sfR_{\nu}(\sfS_\nu(y')) = \sfR_{\nu}(\sfS_\mu(y))= x_1$ and hence
  \begin{equation}
    \label{eq:SxR2}
    \sfR_{2\nu}(y') = x_1x'_0
  \end{equation}
  for some $x'_0\in\cB(\nu)$.  From~\eqref{eq:SxR1} and~\eqref{eq:SxR2} we deduce that both $x_2x_1$ and $x_1x'_0$ are Cartan words.  But then
  \begin{align*}
    \sigma_{\cB(\lambda+\nu),\cB(\nu)}&((x_2\otimes x_1) \otimes x'_0) \\
    &= (\sigma_{\cB(\lambda),\cB(\nu)}\otimes\id)(\id\otimes\sigma_{\cB(\nu),\cB(\nu)})(x_2\otimes x_1\otimes x'_0) \\
    &= (\sigma_{\cB(\lambda),\cB(\nu)}\otimes\id)(x_2\otimes x_1\otimes x'_0) \\
    &\neq 0,
  \end{align*}
  and hence $x_2x_1x'_0 = \sfR_{\lambda+\nu}(\sfS_{\mu}(x))x'_0$ is a Cartan word, for any $\lambda\in\bP^+$.  Therefore, the semi-infinite word
  \[
    x' = \sfS_{\mu}(x)x'_0
  \]
  is a Cartan word and it satisfies
  \begin{align*}
    \sfS_I(x') &= \sfS_I(x) = z, \\
    \sfR_I(x') &= \sfR_I(\sfS_\mu(x))x'_0 = \sfS_\mu(y)x'_0 = y'.
  \end{align*}
  \end{itemize}
  This completes the proof.
\end{proof}

\begin{corollary}
\label{cor:R_I_surjective}
  The map $\sfR_I:\crystal(\infty) \to \crystal_I(\infty)$ is surjective.
\end{corollary}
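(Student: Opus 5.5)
We want to show that every $y\in\crystal_I(\infty)$ lies in the image of $\sfR_I$. The plan is to produce, for each $I$-orbit, one explicit preimage, and then use Proposition~\ref{prop:SxR}(ii) to move around the orbit.

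First fix $\mu\in\bPIplus$ with support $I$. By the discussion preceding Theorem~\ref{thm:GXIL_orbits}, any $y\in\crystal_I(\infty)$ is tail equivalent in $\crystal_I(\infty)$ to an extremal element $b_{[w]_I}(\infty)=b_w(\mu)^\infty$ for some $w\in W$. Concretely, there are $\alpha,\beta\in\bPIplus$ with $\sfS_\alpha(y)=\sfS_\beta(b_{[w]_I}(\infty))$; in fact $b_{[w]_I}(\infty)$ is shift-invariant, so $\sfS_\beta(b_{[w]_I}(\infty))=b_{[w]_I}(\infty)$. So it suffices to treat the extremal elements and then transfer.

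Next, exhibit a preimage of $b_{[w]_I}(\infty)$. The natural candidate is the extremal semi-infinite crystal $b_w(\infty)\in\crystal(\infty)$ of Definition~\ref{eq:b_w}. By Lemma~\ref{lem:extremal_words}, taking the additively cofinal sequence $(\mu_j)$ with all $\mu_j=\mu+\rho$, where $\rho$ is strictly dominant, every truncation $\sfR_\lambda(b_w(\infty))$ is the extremal element $b_w(\lambda)$. Hence $\sfR_I(b_w(\infty))=(b_w(\lambda))_{\lambda\in\bPIplus}$. This agrees with $b_w(\mu)^\infty$, because all finite truncations of $b_w(\mu)^\infty$ are again the extremal elements $b_w(n\mu)$, and they are therefore compatible with $b_w(\lambda)$ under the maps $\sfR$. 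The inverse-limit description then forces equality, by the same argument used in the proof of Lemma~\ref{lem:extremal_words}. So $\sfR_I(b_w(\infty))=b_{[w]_I}(\infty)$.

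Finally, apply Proposition~\ref{prop:SxR}(ii) to the pair $(\sfS_I(b_w(\infty)),\,b_{[w]_I}(\infty))$, which lies in the image of $(\sfS_I,\sfR_I)$. The element $y$ is tail equivalent to $b_{[w]_I}(\infty)$ in exactly the sense required there. Hence $(\sfS_I(b_w(\infty)),y)$ is also in the image, and in particular $y\in\sfR_I(\crystal(\infty))$.

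The only point needing care is identifying $\sfR_I(b_w(\infty))$ with $b_{[w]_I}(\infty)$. This rests on the fact that $\sfR_\nu$ sends extremal elements to extremal elements of the same Weyl type, i.e. $\sfR_\nu(b_w(\lambda))=b_w(\nu)$. That fact follows because the Cartan embedding of $b_w(\lambda)$ is $b_w(\lambda-\nu)\otimes b_w(\nu)$: the latter has extremal weight $w\lambda$ in the tensor product, so it lies in the Cartan component.
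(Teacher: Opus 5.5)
Your proposal is correct and follows the paper's proof. Both arguments reduce to an extremal representative $b_{[w]_I}(\infty)$ using the orbit description, take $b_w(\infty)$ as an explicit preimage of it under $\sfR_I$, and then move along the tail equivalence with Proposition~\ref{prop:SxR}\emph{(ii)}; your verification that $\sfR_I(b_w(\infty))=b_{[w]_I}(\infty)$, via $\sfR_\nu(b_w(\lambda))=b_w(\nu)$, simply spells out a step the paper asserts with ``note that''.
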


\begin{proof}
  Let $x'\in\crystal_I(\infty)$.  By Theorem~\ref{thm:GXIL_orbits}, $x'$ is tail-equivalent to some extremal element $b_{[w]_I}(\infty)\in\crystal_I(\infty)$, with $w\in W$.  But note that
  \[
    (\sfS_I(b_w(\infty)),\sfR_I(b_w(\infty))) = (b_w(\infty),b_{[w]_I}(\infty)).
  \]
  Therefore, by Proposition~\ref{prop:SxR}\emph{(ii)}, there is some $x\in\crystal(\infty)$ such that
  \[
    (\sfS_I(x),\sfR_I(x)) = (b_w(\infty),x').
  \]
  This completes the proof.
\end{proof}

\subsection{Proof of the isomorphism for crystallised flag varieties}
\label{sec:proof}

We will now complete the proof of Theorem~\ref{thm:isomorphism2} with a series of lemmas.

\begin{lemma}
  \label{lem:GIK}
  Fix $I\subseteq\Simples$.  The set
  \begin{multline}
  \label{eq:GI}
    \GIK = \{ (x,\mu-\nu,y) \in \GK \mid
    \mu,\nu\in\bPIplus,\ \sfS_\mu(x) = \sfS_\nu(y)\}
  \end{multline}
  is an open subgroupoid of $\GK$, and the map
  \begin{equation}
  \label{eq:pi_I}
    \pi_I = \sfR_{I} \times \id \times \sfR_{I}
     : \GIK \to \GYI
  \end{equation}
  is a continuous surjective groupoid morphism.
\end{lemma}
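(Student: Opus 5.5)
We check four things in turn: that $\GIK$ is a subgroupoid, that it is open, that $\pi_I$ is a well-defined groupoid morphism, and that $\pi_I$ is continuous and surjective.

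\emph{Subgroupoid and openness.} Since $\bPIplus$ is a subsemigroup of $\bP^+$, the composite of $(x,\mu-\nu,y)$ and $(y,\mu'-\nu',z)$ in $\GIK$ satisfies
\[
\sfS_{\mu+\mu'}(x)=\sfS_{\mu'}\sfS_\nu(y)=\sfS_\nu\sfS_{\mu'}(y)=\sfS_{\nu+\nu'}(z),
\]
with $\mu+\mu',\nu+\nu'\in\bPIplus$. Inverses and units (take $\mu=\nu=0$) are clear. For openness, take $(x,\mu-\nu,y)\in\GIK$ with $\sfS_\mu(x)=\sfS_\nu(y)=:z$. Writing $x=z\,\sfR_\mu(x)$ and $y=z\,\sfR_\nu(y)$ shows that it lies in $Z(\sfR_\mu(x),\sfR_\nu(y))$. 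Every element of this basic set has the form $(z'a,\mu-\nu,z'b)$ with $a\in\cB(\mu)$, $b\in\cB(\nu)$ and $\mu,\nu\in\bPIplus$, so the whole set lies in $\GIK$. Hence $\GIK$ is a union of basic open bisections.

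\emph{Morphism.} If $\sfS_\mu(x)=\sfS_\nu(y)$ with $\mu,\nu\in\bPIplus$, then \eqref{eq:R_IS_lambda} gives $\sfS_\mu(\sfR_I x)=\sfS_\nu(\sfR_I y)$. Also $\mu-\nu\in\bPI$, so $\pi_I$ lands in $\GYI$, where $L=\bPI$ imposes no further constraint. Compatibility with range, source, product and inverse is immediate, since $\pi_I$ is the identity on the middle coordinate.

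\emph{Continuity.} It suffices to pull back basic sets. One has
\[
\pi_I^{-1}(Z_I(a,b))=Z(a,b)\cap\GIK,
\]
using \eqref{eq:RR_I} to see that $\sfR_\mu(\sfR_I x)=a$ if and only if $\sfR_\mu(x)=a$, together with the fact that the shifts commute with $\sfR_I$. Alternatively, $\sfR_I$ is continuous and the topology on $\GIK$ is generated by $r$, $s$ and the locally constant cocycle, so continuity also follows from that description.

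\emph{Surjectivity.} This is the main step. Let $(x',\mu-\nu,y')\in\GYI$ with $\sfS_\mu(x')=\sfS_\nu(y')$. By Corollary~\ref{cor:R_I_surjective}, choose $x\in\crystal(\infty)$ with $\sfR_I(x)=x'$. Since $y'$ is tail equivalent to $x'$ in $\crystal_I(\infty)$, Proposition~\ref{prop:SxR}(ii) gives $y\in\crystal(\infty)$ with $\sfS_I(y)=\sfS_I(x)$ and $\sfR_I(y)=y'$. From the proof of that proposition we can take concretely
\[
y=\sfS_\mu(x)\,\sfR_\nu(y'),
\]
so that $\sfS_\nu(y)=\sfS_\mu(x)$ directly. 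Then $(x,\mu-\nu,y)\in\GIK$ maps to $(x',\mu-\nu,y')$.

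The point to verify carefully is that the construction in Proposition~\ref{prop:SxR}(ii) yields exactly $\sfS_\nu(y)=\sfS_\mu(x)$ with the same $\mu,\nu$. In that proof the roles of $\mu$ and $\nu$ need matching: there $x'=\sfS_\mu(x)x'_0$ with $x'_0\in\cB(\nu)$, which is precisely what is required here. If this bookkeeping does not line up, a fallback is to enlarge $\mu$ and $\nu$ by a common $\lambda\in\bPIplus$, using \eqref{eq:SIS}, before applying the proposition.
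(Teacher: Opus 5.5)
There is a gap in your continuity step. Because $Z(a,b)\subseteq\GIK$ whenever $\mu,\nu\in\bPIplus$, your claim amounts to $\pi_I^{-1}(Z_I(a,b))=Z(a,b)$. The facts you cite, \eqref{eq:RR_I} and the commutation of shifts with $\sfR_I$, only give the inclusion $Z(a,b)\subseteq\pi_I^{-1}(Z_I(a,b))$. Continuity needs the reverse inclusion.

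So take $(x,\mu-\nu,y)\in\GIK$ whose image lies in $Z_I(a,b)$. You learn $\sfR_\mu(x)=a$ and $\sfR_\nu(y)=b$, but only $\sfR_I(\sfS_\mu x)=\sfR_I(\sfS_\nu y)$. Since $\sfR_I$ is far from injective, this does not give $\sfS_\mu x=\sfS_\nu y$. This is not a formality. Already for $a=b=\emptyset$, the claim says that every $(x,0,y)\in\GIK$ with $\sfR_I(x)=\sfR_I(y)$ is a unit, which is exactly the injectivity in Proposition~\ref{prop:SxR}(i).

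The missing step uses membership in $\GIK$. By Remark~\ref{rmk:GIK_characterization} and \eqref{eq:SIS},
\[
\sfS_I(\sfS_\mu x)=\sfS_I(x)=\sfS_I(y)=\sfS_I(\sfS_\nu y).
\]
Together with $\sfR_I(\sfS_\mu x)=\sfR_I(\sfS_\nu y)$, Proposition~\ref{prop:SxR}(i) then gives $\sfS_\mu x=\sfS_\nu y$; this is what the paper does. An elementary alternative: choose $\mu',\nu'\in\bPIplus$ with $\mu'-\nu'=\mu-\nu$ and $\sfS_{\mu'}x=\sfS_{\nu'}y$. The two tails $\sfS_\mu x$ and $\sfS_\nu y$ then have the same image under $\sfS_{\mu'}$ and under $\sfR_{\mu'}$, so they coincide, because $w=\sfS_{\mu'}(w)\,\sfR_{\mu'}(w)$.

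Your fallback does not rescue this, because a Deaconu--Renault groupoid does not carry the initial topology of $(r,c,s)$. In the $\SU(2)$ example of Section~\ref{sec:SU2}, take $k\neq0$. The elements $(n,k,n)$ converge to $(\infty,k,\infty)$ in that initial topology, but not in $\cG(\SU(2))$. There, every basic neighbourhood $Z(a^{j+k},a^j)$ of $(\infty,k,\infty)$ contains only finite points $(m,k,n)$ with $m-n=k$. For $I=\Simples$ both $\GIK$ and $\cG(Y^{\Simples})$ equal $\GK$. So continuity of $\sfR_I\circ r$, $c$ and $\sfR_I\circ s$ tells you nothing about continuity of $\pi_I$.

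The rest of your proposal (subgroupoid, openness, morphism, surjectivity) is correct and follows the paper. In the surjectivity step the bookkeeping does line up: the proof of Proposition~\ref{prop:SxR}(ii) constructs exactly $\sfS_\mu(x)\,\sfR_\nu(y')$. The paper instead obtains the required $\mu,\nu$ from $\sfS_I(x)=\sfS_I(y)$ and the eventual stabilisation of the shifts.
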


\begin{proof}
  It is immediate that $\GIK$ is a subgroupoid.  To see that it is open, let $(x,\lambda,y)\in\GIK$.  Then
\[
  \sfS_{\mu}(x) = \sfS_{\nu}(y),
\]
for some $\mu,\nu\in\bPIplus$ with $\mu-\nu=\lambda$.  Putting $x_0 = \sfR_{\mu}(x)$ and $y_0 = \sfR_{\nu}(y)$, we have
\[
  (x,\lambda,y) \in Z(x_0,y_0) \subseteq \GIK,
\]
which is an open neighbourhood in $\GK$.

It follows from the definition of $\GIK$ that $\pi_I(\GIK)\subseteq\GYI$, and it is clearly a groupoid morphism.  To show that it is continuous, we will prove that
  \begin{equation}
  \label{eq:pi_I_continuous}
    \pi_I^{-1}(Z_I(a,b)) = Z(a,b),
  \end{equation}
  where $Z_I(a,b)$ is any open set from the neighbourhood basis of Equation \eqref{eq:Z_I} for $a\in\cB(\mu)$, $b\in\cB(\nu)$ with $\mu,\nu\in\bPIplus$.
  Certainly $\pi_I(Z(a,b)) \subseteq Z_I(a,b)$.  Conversely, suppose that $(x,\lambda,y)\in\GIK$ with $\pi_I(x,\lambda,y)\in Z_I(a,b)$.  Then we can write
  \[
    x = x'a, \quad y=y'b
  \]
  for some $x',y'\in\crystal(\infty)$ with
  \[
  \sfR_I(x') = \sfR_I(y').
  \]
  But also,
  \[
  \sfS_I(x') = \sfS_I(x) = \sfS_I(y)= \sfS_I(y'),
  \]
  so by Proposition~\ref{prop:SxR}, we have $x'=y'$.  Therefore $(x,\lambda,y)\in Z(a,b)$, which proves the claim.

Finally, for surjectivity, let $(x',\lambda,y')\in\GYI$.  By the surjectivity of $\sfR_I$ in Corollary~\ref{cor:R_I_surjective}, there is $x\in\crystal(\infty)$ with $\sfR_I(x)=x'$.  Since $x'$ and $y'$ are tail equivalent in $\crystal_I(\infty)$, Proposition~\ref{prop:SxR}\emph{(ii)} implies that there is also $y\in\crystal(\infty)$ such that $\sfR_I(y)=y'$ and $\sfS_I(x)=\sfS_I(y)$.  The latter equation means that $\sfS_\mu(x) = \sfS_\nu(y)$ for all $\mu,\nu\in\bPIplus$ sufficiently large, and we may take them so that $\lambda = \mu-\nu$.  It follows that $(x,\lambda,y)\in\cG_I(K)$, which completes the proof.
\end{proof}

\begin{remark}
  \label{rmk:GIK_characterization}
An element $(x,\lambda,y)\in\GK$ belongs to $\GIK$ if and only if $\lambda\in\bPI$ and $\sfS_I(x) = \sfS_I(y)$.
\end{remark}

\begin{lemma}
  \label{lem:GIK_decomposition}
  Let $\GIK$ be the open subgroupoid of Lemma~\ref{lem:GIK}, take $(x,\lambda,y) \in \GIK$, and write $\pi_I(x,\lambda,y) = (x',\lambda,y')\in\GYI$.  If $z'\in\crystal_I(\infty)$ and $\mu,\nu\in\bPI$ are such that
  \[
     (x',\lambda,y') = (x',\mu,z')(z',\nu,y'),
  \]
  then there exists a unique preimage $z\in\sfR_I^{-1}(z')$ such that $(x,\mu,z)$ and $(z,\nu,y)$ are in $\GIK$.
\end{lemma}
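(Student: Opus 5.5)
The plan is to build $z$ from the decomposition map $(\sfS_I,\sfR_I)$ of Proposition~\ref{prop:SxR}, and to obtain uniqueness from the injectivity in Proposition~\ref{prop:SxR}(i).

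\emph{Characterising the candidates.} By Remark~\ref{rmk:GIK_characterization}, a triple $(x,\mu,z)$ with $\mu\in\bPI$ lies in $\GIK$ if and only if $\sfS_I(z)=\sfS_I(x)$. It must also be an element of $\GK$, but this follows once $\sfS_I(z)=\sfS_I(x)$ holds with the correct degree, as I explain below. Since $(x,\lambda,y)\in\GIK$, we have $\sfS_I(x)=\sfS_I(y)=:w$. So the requirement on $z$ is exactly
\[
\sfS_I(z)=w,\qquad \sfR_I(z)=z'.
\]

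\emph{Uniqueness.} This is immediate from Proposition~\ref{prop:SxR}(i): at most one $z\in\crystal(\infty)$ satisfies $(\sfS_I(z),\sfR_I(z))=(w,z')$.

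\emph{Existence.} The pair $(w,x')=(\sfS_I(x),\sfR_I(x))$ lies in the image of the decomposition map. Since $(x',\mu,z')\in\GYI$, the element $z'$ is tail equivalent to $x'$ in $\crystal_I(\infty)$. Proposition~\ref{prop:SxR}(ii) then gives $z\in\crystal(\infty)$ with $(\sfS_I(z),\sfR_I(z))=(w,z')$.

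\emph{Checking membership in $\GIK$.} It remains to verify $(x,\mu,z)\in\GIK$; then $(z,\nu,y)\in\GIK$ follows, since $\GIK$ is a groupoid and
\[
(z,\nu,y)=(x,\mu,z)^{-1}(x,\lambda,y),
\]
with $\nu=\lambda-\mu$. The main obstacle is matching the degree: I need $\mu_1,\mu_2\in\bPIplus$ with $\mu_1-\mu_2=\mu$ and $\sfS_{\mu_1}(x)=\sfS_{\mu_2}(z)$.

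By Proposition~\ref{prop:infinite_shift}, the nets $\sfS_\alpha(x)$ and $\sfS_\alpha(z)$ are both eventually equal to $w$ for $\alpha\in\bPIplus$ large. Any $\mu\in\bPI$ can be written as $\mu_1-\mu_2$ with $\mu_1,\mu_2\in\bPIplus$ as large as we like, by adding a large common element of $\bPIplus$ to both. Choosing them beyond both stabilisation thresholds gives
\[
\sfS_{\mu_1}(x)=w=\sfS_{\mu_2}(z),
\]
as needed.

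I would also check that the construction in Proposition~\ref{prop:SxR}(ii) really produces the correct $\sfS_I$-value, namely $\sfS_I(z)=\sfS_I(x)$, which is asserted there. This holds because $z=\sfS_{\mu'}(x)z_0'$ and $\sfS_I$ absorbs both finite shifts and finite prefixes, by~\eqref{eq:SIS} and Proposition~\ref{prop:infinite_shift}.
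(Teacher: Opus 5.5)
Your proposal is correct and follows the paper's proof: you get existence from Proposition~\ref{prop:SxR}\emph{(ii)}, applied to $(\sfS_I(x),\sfR_I(x))$ and the tail-equivalent element $z'$; membership in $\GIK$ from Remark~\ref{rmk:GIK_characterization}; and uniqueness from the injectivity in Proposition~\ref{prop:SxR}\emph{(i)}. Your explicit degree-matching step, writing $\mu=\mu_1-\mu_2$ with $\mu_1,\mu_2\in\bPIplus$ beyond both stabilisation thresholds, just supplies the justification for that Remark, which the paper states without proof.
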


\begin{proof}
  By the definition of $\GYI$, we have
  \[
    \sfS_{\mu'}(x') = \sfS_{\mu''}(z') \text{ and }
    \sfS_{\nu'}(z') = \sfS_{\nu''}(y'),
  \]
  for some $\mu',\mu'',\nu',\nu''\in\bPIplus$ with $\mu=\mu'-\mu''$ and $\nu=\nu'-\nu''$.
  By Proposition~\ref{prop:SxR}\emph{(ii)}, there exists $z\in\crystal(\infty)$ such that
  \[
    (\sfS_I(z),\sfR_I(z)) = (\sfS_I(x), z').
  \]
  By Remark~\ref{rmk:GIK_characterization}, $(x,\mu,z)$ and $(z,\nu,y)$ are in $\GIK$.  This proves existence.  Uniqueness follows from the injectivity of $(\sfR_I,\sfS_I)$, see Proposition~\ref{prop:SxR}\emph{(i)}.
\end{proof}

\begin{lemma}
  \label{lem:pi_I_star}
Fix $I\subseteq\Simples$.  The map
\begin{equation}
  \label{eq:pi_I_star}
  \Cc^\infty(\GYI)
   \stackrel{\pi_I^*}{\longrightarrow} \Cc^\infty(\GIK) \into \Cc^\infty(\GK),
\end{equation}
where the first arrow is the pullback by $\pi_I$ from Equation~\eqref{eq:pi_I} and the second map is extension by zero, is an injective graded $*$-algebra morphism.  Its image is isomorphic to $\cO(Y^I_0)$ under the isomorphism of Lemma~\ref{lem:flag_subalgebra}.
\end{lemma}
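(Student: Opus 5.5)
We prove the four assertions in turn: the pullback is well defined, it is a $*$-homomorphism, it preserves the grading and is injective, and its image is $\cO(Y^I_0)$.

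\emph{Well-definedness.} Let $f\in\Cc^\infty(\GYI)$. Since $\pi_I$ is continuous (Lemma~\ref{lem:GIK}), $f\circ\pi_I$ is locally constant on $\GIK$. For compact support we use equation~\eqref{eq:pi_I_continuous}, namely $\pi_I^{-1}(Z_I(a,b))=Z(a,b)$. Every $f$ is a finite linear combination of indicators $\mathbf{1}_{Z_I(a,b)}$, because the $Z_I(a,b)$ form a basis of compact open sets. Hence
\[
\pi_I^*\mathbf{1}_{Z_I(a,b)}=\mathbf{1}_{Z(a,b)},
\]
which lies in $\Cc^\infty(\GIK)$. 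Since $\GIK$ is open, extension by zero gives a locally constant compactly supported function on $\GK$. Extension by zero from an open subgroupoid is a $*$-homomorphism, so it is enough to treat $\pi_I^*$ into $\Cc^\infty(\GIK)$.

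\emph{Compatibility with the $*$-operations.} The involution is clear, because $\pi_I$ commutes with inversion. For the convolution, evaluate at $g=(x,\lambda,y)\in\GIK$:
\[
(\pi_I^*f_1*\pi_I^*f_2)(g)=\sum_{\substack{h k=g\\ h,k\in\GIK}} f_1(\pi_I h)\,f_2(\pi_I k).
\]
We need $\pi_I$ to restrict to a bijection from the factorisations of $g$ in $\GIK$ onto the factorisations of $\pi_I(g)$ in $\GYI$. Lemma~\ref{lem:GIK_decomposition} supplies exactly this: every factorisation $(x',\mu,z')(z',\nu,y')$ of $\pi_I(g)$ lifts uniquely to a factorisation $(x,\mu,z)(z,\nu,y)$ of $g$. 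Conversely, any factorisation of $g$ in $\GIK$ maps to a factorisation of $\pi_I(g)$. The degree coordinate is untouched, so the bijection is degree-preserving. Therefore
\[
\pi_I^*f_1*\pi_I^*f_2=\pi_I^*(f_1*f_2).
\]
This bijection between factorisations is the step I expect to need the most care.

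\emph{Grading.} The map $\pi_I$ acts as the identity on the cocycle coordinate, so $\pi_I^*$ is graded.

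\emph{Injectivity.} Surjectivity of $\pi_I$ (Lemma~\ref{lem:GIK}) gives injectivity of $\pi_I^*$ at once. If $f\circ\pi_I=0$, then $f$ vanishes on $\pi_I(\GIK)=\GYI$, so $f=0$. Extension by zero is also injective, so the composite is injective.

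\emph{Image.} From $\pi_I^*\mathbf{1}_{Z_I(a,b)}=\mathbf{1}_{Z(a,b)}$, the image is the span of the $\mathbf{1}_{Z(a,b)}$ with $a\in\cB(\mu)$, $b\in\cB(\nu)$ and $\mu,\nu\in\bPIplus$. Since $L=\bPI$, the condition $\mu-\nu\in L$ is automatic. By Lemma~\ref{lem:flag_subalgebra}, this span is exactly the image of $\cO(Y^I_0)$ under the isomorphism of Theorem~\ref{thm:isomorphism}.
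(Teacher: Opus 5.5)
Your proposal is correct and follows essentially the same route as the paper. The identity $\pi_I^*\mathbf{1}_{Z_I(a,b)}=\mathbf{1}_{Z(a,b)}$ from \eqref{eq:pi_I_continuous} identifies the image via Lemma~\ref{lem:flag_subalgebra}, multiplicativity comes from the unique lifting of Lemma~\ref{lem:GIK_decomposition}, and injectivity follows from surjectivity of $\pi_I$; you simply write out the bijection of factorisations in the convolution sum more explicitly than the paper does.
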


\begin{proof}
  Extension by zero from the open subgroupoid $\GIK$ to $\GK$ is certainly a graded $*$-morphism, so it suffices to consider the map $\pi_I^*$.  To find the image of $\pi_I^*$, first recall that $\Cc^\infty(\GYI)$ is spanned by the indicator functions $\mathbf{1}_{Z_I(a,b)}$, where $Z_I(a,b)$ are the clopen subsets of~\eqref{eq:Z_I}, for $a\in\cB(\mu)$, $b\in\cB(\nu)$ with $\mu,\nu\in\bPIplus$.
  But $\pi_I^*\mathbf{1}_{Z_I(a,b)} = \mathbf{1}_{Z(a,b)}$ thanks to Equation \eqref{eq:pi_I_continuous}.  This confirms that its image is contained in $\Cc^\infty(\GIK)$ and under the isomorphism of Theorem \ref{thm:isomorphism} it corresponds to $\cO(Y^I_0)$ by Lemma \ref{lem:flag_subalgebra}.
  
  The fact that $\pi_I^*$ is $*$-preserving and graded is straightforward from the definitions.  It is an algebra morphism by the unique lifting property of Lemma~\ref{lem:GIK_decomposition}, and it is injective by the surjectivity of $\pi_I$ from Lemma~\ref{lem:GIK}.
  This completes the proof.
\end{proof}

\begin{proof}[Proof of Theorem~\ref{thm:isomorphism2}]
  The map described in Lemma~\ref{lem:pi_I_star} is a graded $*$-isomorphism
  \begin{equation*}
    \cO(Y^I_0) \cong \Cc^\infty(\GYI),
  \end{equation*}
  where $\cO(Y^I_0)$ has the opposite $\bP$-grading.  This is Theorem~\ref{thm:isomorphism2} in the case where $L=\bPI$.  Since it preserves the $\bP$-grading, it restricts to an isomorphism between the subalgebras
  \[
    \cO(X^{I,L}_0) \cong \Cc^\infty(\GXIL),
  \]
  for every $L\leq\bPI$.  The isomorphism of the $C^*$-completions can be proven similarly; we leave the details to the reader.
\end{proof}


\printbibliography

@preamble{"\def\polhk#1{\setbox0=\hbox{#1}{\ooalign{\hidewidth     \lower1.5ex\hbox{`}\hidewidth\crcr\unhbox0}}} "}

@article{MatYun:crystals,
	author = {Matassa, Marco and Yuncken, Robert},
	doi = {10.1515/crelle-2023-0047},
	fjournal = {Journal f{\"u}r die Reine und Angewandte Mathematik},
	issn = {0075-4102},
	journal = {J. Reine Angew. Math.},
	language = {English},
	pages = {173--221},
	title = {Crystal limits of compact semisimple quantum groups as higher-rank graph algebras},
	volume = {802},
	year = {2023}}

@article{Littelmann,
	author = {Littelmann, Peter},
	doi = {10.1006/jabr.1995.1175},
	fjournal = {Journal of Algebra},
	issn = {0021-8693},
	journal = {J. Algebra},
	language = {English},
	number = {1},
	pages = {65--87},
	title = {Crystal graphs and {Young} tableaux},
	volume = {175},
	year = {1995}}

@incollection{Sheu:CQG_lectures,
	author = {Sheu, Albert J. L.},
	booktitle = {Quantum groups and quantum spaces. Lectures delivered during the minisemester, Warsaw, Poland, December 1, 1995},
	language = {English},
	pages = {41--50},
	publisher = {Warszawa: Polish Academy of Sciences, Inst. of Mathematics},
	title = {Groupoids and compact quantum groups},
	year = {1997}}

@article{Sheu:CQGs,
	author = {Sheu, Albert J. L.},
	doi = {10.1006/jfan.1996.2999},
	fjournal = {Journal of Functional Analysis},
	issn = {0022-1236},
	journal = {J. Funct. Anal.},
	language = {English},
	number = {2},
	pages = {371--393},
	title = {Compact quantum groups and groupoid {{\(C^*\)}}-algebras},
	volume = {144},
	year = {1997}}

@incollection{Sheu:projective_spaces,
	author = {Sheu, Albert Jeu-Liang},
	booktitle = {Operator algebras and operator theory. Proceedings of the international conference, Shanghai, China, July 4--9, 1997},
	isbn = {0-8218-1093-6},
	language = {English},
	pages = {341--350},
	publisher = {Providence, RI: American Mathematical Society},
	title = {Groupoid approach to quantum projective spaces},
	year = {1998}}

@article{Sheu:spheres,
	author = {Sheu, Albert J. L.},
	fjournal = {The Quarterly Journal of Mathematics. Oxford Second Series},
	issn = {0033-5606},
	journal = {Q. J. Math., Oxf. II. Ser.},
	language = {English},
	number = {192},
	pages = {503--510},
	title = {Quantum spheres as groupoid {{\(C^*\)}}-algebras},
	volume = {48},
	year = {1997}}

@article{SimWil:primitive_ideals,
	author = {Sims, Aidan and Williams, Dana P.},
	doi = {10.1007/s10468-015-9573-4},
	fjournal = {Algebras and Representation Theory},
	issn = {1386-923X},
	journal = {Algebr. Represent. Theory},
	language = {English},
	number = {2},
	pages = {255--276},
	title = {The primitive ideals of some {\'e}tale groupoid {{\(C^\ast\)}}-algebras},
	volume = {19},
	year = {2016}}

@article{Paterson,
	author = {Paterson, Alan L. T.},
	fjournal = {Journal of Operator Theory},
	issn = {0379-4024},
	journal = {J. Oper. Theory},
	language = {English},
	number = {3},
	pages = {645--662},
	title = {Graph inverse semigroups, groupoids and their {{\(C^*\)}}-algebras},
	volume = {48},
	year = {2002}}

@incollection{Renault:Cuntz-like,
	author = {Renault, Jean},
	booktitle = {Operator theoretical methods. Proceedings of the 17th international conference on operator theory, Timi\c{s}oara, Romania, June 23--26, 1998},
	isbn = {973-99097-2-8},
	language = {English},
	pages = {371--386},
	publisher = {Bucharest: The Theta Foundation},
	title = {Cuntz-like algebras},
	year = {2000}}

@article{Kas:Demazure,
	author = {Kashiwara, Masaki},
	doi = {10.1215/S0012-7094-93-07131-1},
	fjournal = {Duke Mathematical Journal},
	issn = {0012-7094},
	journal = {Duke Math. J.},
	language = {English},
	number = {3},
	pages = {839--858},
	title = {The crystal base and {Littelmann}'s refined {Demazure} character formula},
	volume = {71},
	year = {1993}}

@article{AssDraGon,
	author = {Assaf, Sami and Dranowski, Anne and Gonz{\'a}lez, Nicolle},
	doi = {10.1007/s10468-023-10231-z},
	fjournal = {Algebras and Representation Theory},
	issn = {1386-923X},
	journal = {Algebr. Represent. Theory},
	language = {English},
	number = {1},
	pages = {627--638},
	title = {Extremal tensor products of {Demazure} crystals},
	volume = {27},
	year = {2024}}

@misc{Matassa:some_results,
	arxiv = {arXiv:2504.00684},
	author = {Marco Matassa},
	howpublished = {Preprint, {arXiv}:2504.00684 [math.{CO}] (2026)},
	title = {Some results on the higher-rank graphs associated to crystals of semisimple {Lie} algebras},
	url = {https://arxiv.org/abs/2504.00684},
	year = {2026}}

@book{BjoBre,
	author = {Bj{\"o}rner, Anders and Brenti, Francesco},
	doi = {10.1007/3-540-27596-7},
	fseries = {Graduate Texts in Mathematics},
	isbn = {3-540-44238-3},
	issn = {0072-5285},
	language = {English},
	publisher = {New York, NY: Springer},
	series = {Grad. Texts Math.},
	title = {Combinatorics of {Coxeter} groups},
	volume = {231},
	year = {2005}}

@article{VakSoi:SUn,
	author = {Vaksman, L. L. and Sojbel'man, Ya. S.},
	fjournal = {Leningrad Mathematical Journal},
	issn = {1048-9924},
	journal = {Leningr. Math. J.},
	language = {English},
	number = {5},
	pages = {1023--1042},
	title = {The algebra of functions on the quantum group {{\(SU(n+1)\)}}, and odd- dimensional quantum spheres},
	volume = {2},
	year = {1990}}

@book{Renault:groupoid_approach,
	author = {Renault, Jean},
	fseries = {Lecture Notes in Mathematics},
	issn = {0075-8434},
	language = {English},
	publisher = {Springer, Cham},
	series = {Lect. Notes Math.},
	title = {A groupoid approach to {C}*-algebras},
	volume = {793},
	year = {1980}}

@article{Deaconu,
	author = {Deaconu, Valentin},
	doi = {10.2307/2154972},
	fjournal = {Transactions of the American Mathematical Society},
	issn = {0002-9947},
	journal = {Trans. Am. Math. Soc.},
	language = {English},
	number = {5},
	pages = {1779--1786},
	title = {Groupoids associated with endomorphisms},
	volume = {347},
	year = {1995}}

@article{StoDij,
	author = {Stokman, Jasper V. and Dijkhuizen, Mathijs S.},
	doi = {10.1007/s002200050613},
	fjournal = {Communications in Mathematical Physics},
	issn = {0010-3616},
	journal = {Commun. Math. Phys.},
	language = {English},
	number = {2},
	pages = {297--324},
	title = {Quantized flag manifolds and irreducible {{\(*\)}}-representations},
	volume = {203},
	year = {1999}}

@misc{Giselsson:SU3,
	archiveprefix = {arXiv},
	author = {Olof Giselsson},
	eprint = {2307.12878},
	primaryclass = {math.OA},
	title = {Quantum {$SU(3)$} as the {$C^*$}-algebra of a 2-graph},
	url = {https://arxiv.org/abs/2307.12878},
	year = {2023}}

@article{Aranda:Kumjian-Pask,
	author = {Aranda Pino, Gonzalo and Clark, John and an Huef, Astrid and Raeburn, Iain},
	doi = {10.1090/S0002-9947-2013-05717-0},
	fjournal = {Transactions of the American Mathematical Society},
	issn = {0002-9947},
	journal = {Trans. Amer. Math. Soc.},
	mrclass = {16S99 (16W50)},
	mrnumber = {3042597},
	number = {7},
	pages = {3613--3641},
	title = {Kumjian-{P}ask algebras of higher-rank graphs},
	url = {https://doi.org/10.1090/S0002-9947-2013-05717-0},
	volume = {365},
	year = {2013}}

@book{BumpSchilling,
	author = {Bump, Daniel and Schilling, Anne},
	doi = {10.1142/9876},
	isbn = {978-981-4733-44-1},
	mrclass = {05-01 (05E10 14T05 17B10)},
	mrnumber = {3642318},
	note = {Representations and combinatorics},
	pages = {xii+279},
	publisher = {World Scientific Publishing Co. Pte. Ltd., Hackensack, NJ},
	title = {Crystal bases},
	url = {https://doi.org/10.1142/9876},
	year = {2017}}

@article{EtiSchSol,
	author = {Etingof, Pavel and Schedler, Travis and Soloviev, Alexandre},
	doi = {10.1215/S0012-7094-99-10007-X},
	fjournal = {Duke Mathematical Journal},
	issn = {0012-7094},
	journal = {Duke Math. J.},
	language = {English},
	number = {2},
	pages = {169--209},
	title = {Set-theoretical solutions to the quantum {Yang}-{Baxter} equation},
	volume = {100},
	year = {1999},
	zbl = {0969.81030},
	zbmath = {1425260}}

@unpublished{GirPal,
	author = {Giri, Manabendra and Pal, Arup Kumar},
	note = {preprint},
	title = {Quantized function algebras at $q=0$: type $A_n$ case},
	url = {https://arxiv.org/pdf/2203.14665.pdf},
	year = {2022}}

@book{HonKan,
	author = {Hong, Jin and Kang, Seok-Jin},
	fseries = {Graduate Studies in Mathematics},
	isbn = {0-8218-2874-6},
	issn = {1065-7338},
	language = {English},
	publisher = {Providence, RI: American Mathematical Society (AMS)},
	series = {Grad. Stud. Math.},
	title = {Introduction to quantum groups and crystal bases},
	volume = {42},
	year = {2002},
	zbl = {1134.17007},
	zbmath = {1757924}}

@article{HonSzy:lens,
	author = {Hong, Jeong Hee and Szyma{\'n}ski, Wojciech},
	doi = {10.2140/pjm.2003.211.249},
	fjournal = {Pacific Journal of Mathematics},
	issn = {1945-5844},
	journal = {Pac. J. Math.},
	language = {English},
	number = {2},
	pages = {249--263},
	title = {Quantum lens spaces and graph algebras.},
	volume = {211},
	year = {2003},
	zbl = {1058.46047},
	zbmath = {2101109}}

@article{Kashiwara:crystal2,
	author = {Kashiwara, M.},
	doi = {10.1215/S0012-7094-91-06321-0},
	fjournal = {Duke Mathematical Journal},
	issn = {0012-7094},
	journal = {Duke Math. J.},
	mrclass = {17B37 (17B10 17B67)},
	mrnumber = {1115118},
	mrreviewer = {Kailash C. Misra},
	number = {2},
	pages = {465--516},
	title = {On crystal bases of the {$Q$}-analogue of universal enveloping algebras},
	url = {https://doi.org/10.1215/S0012-7094-91-06321-0},
	volume = {63},
	year = {1991}}

@article{KumPas,
	author = {Kumjian, Alex and Pask, David},
	fjournal = {The New York Journal of Mathematics},
	issn = {1076-9803},
	journal = {New York J. Math.},
	language = {English},
	pages = {1--20},
	title = {Higher rank graph {{\(C^*\)}}-algebras},
	volume = {6},
	year = {2000},
	zbl = {0946.46044},
	zbmath = {1409630}}

@article{KumPasRae,
	author = {Kumjian, Alex and Pask, David and Raeburn, Iain},
	doi = {10.2140/pjm.1998.184.161},
	fjournal = {Pacific Journal of Mathematics},
	issn = {0030-8730,1945-5844},
	journal = {Pacific J. Math.},
	mrclass = {46L35 (05C20 22A22)},
	mrnumber = {1626528},
	mrreviewer = {Berndt\ Brenken},
	number = {1},
	pages = {161--174},
	title = {Cuntz-{K}rieger algebras of directed graphs},
	url = {https://doi.org/10.2140/pjm.1998.184.161},
	volume = {184},
	year = {1998}}

@article{KumPasRaeRen,
	author = {Kumjian, Alex and Pask, David and Raeburn, Iain and Renault, Jean},
	doi = {10.1006/jfan.1996.3001},
	fjournal = {Journal of Functional Analysis},
	issn = {0022-1236,1096-0783},
	journal = {J. Funct. Anal.},
	mrclass = {46L05 (46L55)},
	mrnumber = {1432596},
	mrreviewer = {Masatoshi\ Enomoto},
	number = {2},
	pages = {505--541},
	title = {Graphs, groupoids, and {C}untz-{K}rieger algebras},
	url = {https://doi.org/10.1006/jfan.1996.3001},
	volume = {144},
	year = {1997}}

@article{NesTus:functions,
	author = {Neshveyev, Sergey and Tuset, Lars},
	doi = {10.1007/s00220-012-1455-6},
	fjournal = {Communications in Mathematical Physics},
	issn = {0010-3616},
	journal = {Commun. Math. Phys.},
	language = {English},
	number = {1},
	pages = {223--250},
	title = {Quantized algebras of functions on homogeneous spaces with {Poisson} stabilizers},
	volume = {312},
	year = {2012},
	zbl = {1250.22017},
	zbmath = {6037971}}

@article{Soibelman,
	author = {Soibelman, Ya. S.},
	fjournal = {Leningrad Mathematical Journal},
	issn = {1048-9924},
	journal = {Leningr. Math. J.},
	language = {English},
	number = {1},
	pages = {161--178},
	title = {The algebra of functions on a compact quantum group, and its representations},
	volume = {2},
	year = {1991},
	zbl = {0718.46012},
	zbmath = {4184162}}

@article{Stokman:quantum_orbit_method,
	author = {Stokman, Jasper V.},
	doi = {10.4310/MRL.2003.v10.n4.a6},
	fjournal = {Mathematical Research Letters},
	issn = {1073-2780},
	journal = {Math. Res. Lett.},
	language = {English},
	number = {4},
	pages = {469--481},
	title = {The quantum orbit method for generalized flag manifolds},
	volume = {10},
	year = {2003},
	zbl = {1063.58003},
	zbmath = {2062124}}

@article{Woronowicz:pseudogroups,
	author = {Woronowicz, S. L.},
	fjournal = {Communications in Mathematical Physics},
	issn = {0010-3616},
	journal = {Comm. Math. Phys.},
	mrclass = {46L99 (22E65 58H05)},
	mrnumber = {901157},
	mrreviewer = {Pierre de la Harpe},
	number = {4},
	pages = {613--665},
	title = {Compact matrix pseudogroups},
	url = {http://projecteuclid.org/euclid.cmp/1104159726},
	volume = {111},
	year = {1987}}

\end{document}